\providecommand{\U}[1]{\protect\rule{.1in}{.1in}}
\newtheorem{theorem}{Theorem}[section]
\theoremstyle{plain}
\newtheorem{corollary}{Corollary}[section]
\newtheorem{lemma}{Lemma}[section]
\newtheorem{proposition}{Proposition}[section]
\numberwithin{equation}{section}
\theoremstyle{definition}
\newtheorem{definition}{Definition}
\newtheorem{remark}{Remark}[section]
\begin{document}
\title[Hamiltonian Stationary equations]{Regularity of Hamiltonian Stationary Equations in Symplectic manifolds}
\author{Arunima Bhattacharya, Jingyi Chen, and Micah Warren}
\address{Department of Mathematics\\
	University of Washington, Seattle, WA 98195}
\email{arunimab@uw.edu}

\address{Department of Mathematics\\
	 University of British Columbia, Vancouver,
	BC V6T 1Z2}
\email{jychen@math.ubc.ca}

\address{Department of Mathematics\\
	University of Oregon, Eugene, OR 97403}
\email{micahw@uoregon.edu}

\maketitle

\begin{abstract}
In this paper, we prove that any $C^{1}$-regular Hamiltonian stationary
Lagrangian submanifold in a symplectic manifold is smooth. More broadly, we
develop a regularity theory for a class of fourth order nonlinear elliptic
equations with two distributional derivatives. Our fourth order regularity
theory originates in the geometrically motivated variational problem for the volume
functional, but should have applications beyond.

\end{abstract}

\section{Introduction}

The main purpose of this paper is to prove the assertion: Any $C^{1}$-regular
Hamiltonian stationary Lagrangian submanifold in a symplectic manifold is
smooth. \let\thefootnote\relax\footnotetext{Chen was partially supported by an
NSERC Discovery Grant (22R80062).}

We achieve this by developing a regularity theory for a class of fourth order
nonlinear equations of double divergence form
\begin{equation}
\label{eqn:Intro}\partial_{x_{l}} \partial_{x_{j}}F^{jl}(x,Du,D^{2}u) =
\partial_{x_{k}} a^{k}(x,Du,D^{2}u)-b(x,Du,D^{2}u).
\end{equation}
The coefficient functions $F^{jl},a^{k},b$ are smooth in the entries
$(x,Du,D^{2}u)$ over a convex region $U\subset\mathbb{R}^{n}\times
\mathbb{R}^{n}\times S^{n\times n}$, and the Legendre ellipticity condition
holds: for a constant $\Lambda>0$
\begin{equation}
\label{elliptic:Intro}\frac{\partial F^{jl}}{\partial u_{ik}}(\xi)\sigma
_{ij}\sigma_{kl}\geq\Lambda\left\Vert \sigma\right\Vert ^{2},\text{ $\forall$
}\sigma\text{ $\in S^{n\times n}$ and $\xi\in U$}.
\end{equation}
A function $u\in W^{2,\infty}$ is said to be a weak solution to the double
divergence equation \eqref{eqn:Intro} if each of the derivatives
$\partial_{x_{i}}$ presented in \eqref{eqn:Intro} are taken in a
distributional sense, as in \eqref{main}. For non-classical solutions to
nonlinear partial differential equations, especially of order beyond two,
attention needs to be paid even for the meaning of solutions, due to the fact
that no uniform theory exists. In our case, the double divergence structure on
the matrix-valued operator $F$, which involves $D^{2}u$ itself, permits us to
define solutions, possibly in the weakest form, by flipping derivatives on $F$
and the lower order terms, to test functions via integration by parts as
traditionally done for distributional solutions, but now only for half of the
total order.

Equations in divergence form occupy an important place in the second order
PDE theory. In fourth order, the most natural counterpart is an equation, linear or nonlinear, with a double divergence structure. Many well-known equations enjoy the structure such as for the bi-harmonic functions, extremal K\"ahler metrics, the Willmore surface, and the Hamiltonian stationary Lagrangian equations which are closely linked to elastic mechanics. We find that the double divergence structure, a less explored area, shares similar features, as second order equations in divergence form, toward a regularity theory. We demonstrate that when
\eqref{elliptic:Intro} holds, any weak solution $u$ to \eqref{eqn:Intro} is
smooth, provided that the oscillation of $D_{q}F(x,Du,D^{2}u)$ can be bounded
locally (in $x$) by a small positive constant.

The above fourth order nonlinear elliptic equation originates in the
variational problem for volume of Lagrangian submanifolds under Hamiltonian
variations in a symplectic manifold $(M,\omega)$ with a Riemannian metric $g$
compatible with $\omega$ in the sense that $\omega(X,Y) =g(JX,Y)$ for an
almost complex structure $J$ on $M$.

A Lagrangian submanifold $L$ is Hamiltonian stationary if its mean curvature
1-form $\omega(H, \cdot)$ is closed and coclosed, i.e. a harmonic 1-form on
$L$ w.r.t. the induced metric from $(M,g)$ (cf. Oh \cite{Oh} also see
\cite[p.1071-1072]{JLS}). In a Calabi-Yau
manifold $(M,\omega,\Omega)$ of complex dimension $n$, this is further equivalent to a scalar equation: the Lagrangian phase function $\Theta$ is harmonic.  Here the holomorphic
$n$-form $\Omega$ satisfies $\Omega\wedge\overline{\Omega}={\omega^{n}%
}/{n!}$ and defines $\Theta$ by $\Omega|_{L} = e^{\sqrt{-1}\Theta}d\mu_{L}$.  
The scalar equation follows from the relation $H=J\nabla\Theta$ (\cite{HL82}, \cite{Oh}, \cite{SWJDG}).

In $\mathbb{C}^{n}$ with the standard K\"ahler structure, a particular
expression for $\Theta$ is available, namely, it is a sum of arctan of the
eigenvalues of the Hessian of the potential function $u$ for a local graphical
representation $L = (x,Du)$. This decomposition feature of the fourth order
operator into two second order elliptic operators is essential in the work of
Chen-Warren \cite{CW} in which it is shown that a $C^{1}$-regular Hamiltonian
stationary Lagrangian submanifold in $\mathbb{C}^{n}$ is real analytic.
However, the same strategy for a Calabi-Yau other than $\mathbb{C}^{n}$
encounters difficulties for the reason that $\Theta$, still well-defined by
$\Omega$ at least locally, now is no longer written in a clean form as sum of
arctan functions, when representing $L$ as a gradient graph in a Darboux coordinate chart.

To overcome the obstacle presented above in the Calabi-Yau case, we find that,
in a more general standpoint, the Riemannian picture without referring to a
symplectic structure is helpful: dealing directly with the stationary point of
the volume of $L=(x,Du)$ in an open ball $B\subset\mathbb{R}^{2n}$ equipped
with a Riemannian metric among nearby competing gradient graphs $L_{t}=(x, Du
+t D\eta)$ for compactly supported smooth functions $\eta$. This leads us to
study the fourth order nonlinear equation \eqref{eqn:Intro} with \eqref{elliptic:Intro}.

We now outline our approach to the regularity problem. Given a $W^{2,\infty}$
weak solution $u$ of \eqref{eqn:Intro} that satisfies the Legendre ellipticity
condition \eqref{elliptic:Intro}, we show, in Proposition \ref{reg3}, that the
difference quotient $[u(x)-u(x-h)]/|h|$ can be bounded in $W^{2,2}$ uniformly
in $h$. Letting $h\to0$ asserts $u\in W^{3,2}$ with estimates controlled by
$\|u\|_{W^{2,\infty}}$. This boosted regularity is then used to bound the
$C^{1,\alpha}$ norm of the difference quotient uniformly in $h$ in Proposition
\ref{reg}, leading to a $C^{2,\alpha}$ bound on $u$. The key ingredient for
this step is a closeness assumption, given by (\ref{cons}): this ensures that
the operator is in fact close to a constant coefficient operator, given by its
linearization at the origin, that leads to a uniform $C^{1,\alpha}$ bound on
the difference quotient. Note that reaching $C^{2,\alpha}$ is a crucial step in
proving smoothness since once $C^{2,\alpha}$ is achieved
the functions $\frac{\partial F^{jl}}{\partial u_{ik}},
\frac{\partial F^{jl}}{\partial u_{k}},\frac{\partial F^{jl}}{\partial x_{p}}
$, which were barely measurable, are now all H\"older continuous in $x$, and this 
is sufficient to prove higher regularity for the equation satisfied by the difference quotient. The
enhanced regularity alone improves the bound on the difference between the
actual operator and its linearization by a factor of a power of $r$, which in
turn ultimately leads to $u\in C^{3,\alpha}$. Moving from $C^{3,\alpha}$ to
$C^{\infty}$ involves a similar bootstrapping procedure employed in
\cite{BW1} by considering the difference quotient.

For the general fourth order nonlinear equation, our main result is the following.

\begin{theorem}
\label{main1:Intro} Suppose that $u\in W^{2,\infty}(B_{1})$ is a weak solution
of \eqref{eqn:Intro} that satisfies condition \eqref{elliptic:Intro} on the
unit ball $B_{1}$ in $\mathbb{R}^{n}$. There is an $\varepsilon_{0}%
(\Lambda,n)>0$
such that if
\begin{equation}
\left|  \frac{\partial F^{jl}}{\partial u_{ik}}(x, Du, D^{2}u) -
\frac{\partial F^{jl}}{\partial u_{ik}}(\xi)\right|  <\varepsilon_{0}
\label{cons1}%
\end{equation}
for some $\xi\in U$ and all $x\in B_{1}$, then $u$ is smooth in $B_{1}$.
\end{theorem}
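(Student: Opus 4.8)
The plan is to run a three-stage bootstrap, exactly along the lines sketched in the introduction, using the difference-quotient method adapted to the double-divergence structure. The starting point is a $W^{2,\infty}$ weak solution $u$, for which the operators $\partial F^{jl}/\partial u_{ik}$ are measurable and bounded but \emph{a priori} no more. The first stage is to establish $u\in W^{3,2}$: for a unit vector $e$ and small $h$, test the weak equation \eqref{eqn:Intro} with a difference-quotient of a suitable cutoff of $u^{h}(x):=[u(x+he)-u(x)]/|h|$ (i.e. difference the equation, then test against $\zeta^{2}u^{h}$ or its second difference). The Legendre condition \eqref{elliptic:Intro}, together with the fundamental theorem of calculus written along the segment joining $(x,Du,D^{2}u)$ and $(x,Du^{h\cdot},\dots)$, produces a Gårding-type inequality on the principal (fourth-order) part; the lower-order terms $a^{k}$, $b$ and the $x$-dependence of $F$ are controlled using only $\|u\|_{W^{2,\infty}}$ and Young's inequality. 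This gives $\|Du^{h}\|_{W^{2,2}(B_{1/2})}\le C(\Lambda,n,\|u\|_{W^{2,\infty}})$ uniformly in $h$, hence $u\in W^{3,2}_{\mathrm{loc}}$. This is Proposition \ref{reg3}; it does \emph{not} need the smallness \eqref{cons1}.

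The second stage is the crucial one, and it is here that \eqref{cons1} enters. Freeze the coefficients: write the differenced equation for $u^{h}$ as a linear fourth-order double-divergence equation
\[
\partial_{x_{l}}\partial_{x_{j}}\!\left(A^{jl}_{ik}(x)\,\partial_{ik}u^{h}\right)=\partial_{x_{k}}(\cdots)-(\cdots),
\]
where $A^{jl}_{ik}(x)=\int_{0}^{1}\frac{\partial F^{jl}}{\partial u_{ik}}(\text{interpolated argument})\,dt$. Split $A=A(\xi)+(A-A(\xi))$; by \eqref{cons1} the second piece has $L^{\infty}$-norm $<\varepsilon_{0}$. Apply the $W^{2,2}\to C^{1,\alpha}$ (Schauder/Campanato) theory for the \emph{constant-coefficient} operator $\partial^{2}_{jl}(A^{jl}_{ik}(\xi)\partial_{ik}\cdot)$ — which is available precisely because \eqref{elliptic:Intro} makes this operator uniformly elliptic in the Legendre–Hadamard sense — and absorb the $\varepsilon_{0}$-perturbation together with the (already controlled) lower-order data. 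The right-hand side data here involves at worst second derivatives of $u$ paired against $W^{1,2}$ test functions, which the first-stage $W^{3,2}$ bound handles. Choosing $\varepsilon_{0}=\varepsilon_{0}(\Lambda,n)$ small enough makes the perturbation absorbable, yielding a $C^{1,\alpha}$ bound on $u^{h}$ uniform in $h$, hence $u\in C^{2,\alpha}_{\mathrm{loc}}(B_{1})$. This is Proposition \ref{reg}.

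The third stage is a clean bootstrap with no further smallness needed. Once $u\in C^{2,\alpha}$, the coefficients $\frac{\partial F^{jl}}{\partial u_{ik}}(x,Du,D^{2}u)$, $\frac{\partial F^{jl}}{\partial u_{k}}$, $\frac{\partial F^{jl}}{\partial x_{p}}$ are $C^{\alpha}$ functions of $x$; the equation satisfied by $u^{h}$ (or by a tangential derivative $\partial_{e}u$, now a legitimate object) is then a \emph{linear} fourth-order double-divergence equation with Hölder coefficients, and Schauder estimates for such equations upgrade $\partial_{e}u\in C^{1,\alpha}$ to $C^{2,\alpha}$, i.e. $u\in C^{3,\alpha}$. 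From $C^{3,\alpha}$ onward one iterates exactly as in \cite{BW1}: differentiate, observe the coefficients gain one order of Hölder regularity at each step, apply Schauder, conclude $u\in C^{k,\alpha}$ for all $k$, hence $u\in C^{\infty}(B_{1})$. I expect the main obstacle to be the second stage — specifically, setting up the linear perturbed fourth-order double-divergence theory in the correct function spaces so that the $W^{3,2}$ gain from stage one is exactly what is needed to close the $C^{1,\alpha}$ estimate on difference quotients while the $\varepsilon_{0}$-term remains absorbable; getting the functional-analytic bookkeeping right (which norms control which terms, and the correct scaling in $r$) is where the real work lies, whereas stages one and three are, respectively, a standard energy argument and a standard Schauder bootstrap.
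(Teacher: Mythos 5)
Your three-stage plan matches the paper's proof essentially step for step: Proposition \ref{reg3} gives the $W^{3,2}$ gain by an energy estimate on difference quotients (no smallness needed), Proposition \ref{reg} uses the closeness condition \eqref{cons} to compare against the frozen constant-coefficient operator via Campanato-type iteration (Corollary \ref{Cor2} and Lemma \ref{HanLin}), and Theorem \ref{main_2} bootstraps from $C^{2,\alpha}$ using that the coefficients are now H\"older in $x$. The only minor difference is presentational: the paper re-runs the Campanato iteration in the bootstrap stage rather than invoking an off-the-shelf Schauder theory for fourth-order double-divergence operators, but the underlying mechanism is the same.
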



This regularity statement suffices for answering affirmatively the motivating
geometric question on smoothness of a $C^{1}$-regular critical point under
Hamiltonian deformations in a symplectic manifold. The transition, from the
general theory in euclidean space to the specific symplectic setting, is done
in a Darboux coordinate chart with estimates on the Riemannian metric within the
special coordinates. This is given by \cite[Prop. 3.2 and Prop. 3.4]{JLS}. Our
main result is the following.

\begin{theorem}
\label{main2:Intro} Let $(M,\omega)$ be a compact symplectic manifold with a
Riemannian metric $g$ compatible with $\omega$ and some almost complex
structure $J$ on $M$. Let $L$ be a Hamiltonian stationary Lagrangian $C^{1}%
$-regular submanifold in $M$ with respect to $\omega,g$. Then $L$ is smooth.
\end{theorem}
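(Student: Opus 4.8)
The plan is to deduce the statement from Theorem~\ref{main1:Intro} by passing to a Darboux chart in which $L$ becomes a gradient graph. Fix $p\in L$. Choose a Darboux chart $\Phi$ near $p$ with $\Phi(p)=0$ and $\Phi^{*}\omega_{0}=\omega$, where $\omega_{0}$ is the standard symplectic form on $\mathbb{R}^{2n}$; the transported metric $\bar g:=(\Phi^{-1})^{*}g$ is then a Riemannian metric near $0$ compatible with $\omega_{0}$. By \cite[Prop.~3.2 and Prop.~3.4]{JLS}, and after composing $\Phi$ with a linear symplectic rotation taking $d\Phi(T_{p}L)$ to $\mathbb{R}^{n}\times\{0\}$, we may assume that on a small ball $B_{r}$ around $0$ the metric $\bar g$ and its first two derivatives are controlled, so in particular $\bar g$ is as $C^{2}(B_{r})$-close to the Euclidean metric as we wish. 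Since $L$ is a $C^{1}$ submanifold, $\Phi(L)$ is, near $0$, a $C^{1}$ graph over $\mathbb{R}^{n}\times\{0\}$, and the Lagrangian condition together with the Poincar\'{e} lemma forces it to be a \emph{gradient} graph $\{(x,Du(x)):x\in B_{r}\}$. Hence $Du\in C^{1}$, i.e.\ $u\in C^{2}(B_{r})$ (shrinking $r$ so that this gives $u\in W^{2,\infty}(B_{r})$), and, normalizing by an additive constant and using the choice of rotation,
\[
u(0)=0,\qquad Du(0)=0,\qquad D^{2}u(0)=0 .
\]

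In the graph representation the Hamiltonian deformations of $L$ are exactly the variations $u\mapsto u+t\eta$, $\eta\in C_{c}^{\infty}(B_{r})$, so the Hamiltonian-stationary property says that $u$ is a critical point of the volume functional $\mathcal{V}(w)=\int_{B_{r}}F(x,Dw,D^{2}w)\,dx$, where $F$ is the volume density of the graph $(x,Dw)$ in the metric $\bar g$. Setting $\tfrac{d}{dt}\big|_{t=0}\mathcal{V}(u+t\eta)=0$ and integrating by parts twice casts the Euler--Lagrange equation into the double-divergence form \eqref{eqn:Intro}; since $u\in W^{2,\infty}(B_{r})$ this holds in the weak sense of \eqref{eqn:Intro}, so $u$ is a weak solution. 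To apply Theorem~\ref{main1:Intro} on $B_{r}$ we check its two hypotheses. First, Legendre ellipticity \eqref{elliptic:Intro} holds on $B_{r}$: at the central point, where $\bar g$ is Euclidean and the $2$-jet of $u$ vanishes, \eqref{eqn:Intro} linearizes to leading order to the biharmonic operator $\Delta^{2}$, for which \eqref{elliptic:Intro} is satisfied with a dimensional constant $\Lambda=\Lambda(n)>0$, and since $\bar g$ is $C^{2}$-close to Euclidean on $B_{r}$ while $(Du,D^{2}u)$ is uniformly small there, \eqref{elliptic:Intro} persists on all of $B_{r}$ with constant, say, $\Lambda(n)/2$.

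The second hypothesis, \eqref{cons1}, is the crux. Let $\varepsilon_{0}=\varepsilon_{0}(\Lambda(n)/2,n)>0$ be the constant furnished by Theorem~\ref{main1:Intro}, and take $\xi=(0,0,0)\in U$. The coefficient $\partial F^{jl}/\partial u_{ik}$ is a smooth function of $(x,Du,D^{2}u)$, built from $\bar g$ and its derivatives, which in turn are smooth in $x$. Here the hypothesis that $L$ is a $C^{1}$ \emph{submanifold} is used decisively: it forces the potential to be of class $C^{2}$, so that $x\mapsto(x,Du(x),D^{2}u(x))$ is a \emph{continuous} map on $B_{r}$ taking the value $\xi$ at $x=0$. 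Therefore, after shrinking $r$ once more,
\[
\left|\frac{\partial F^{jl}}{\partial u_{ik}}\bigl(x,Du(x),D^{2}u(x)\bigr)-\frac{\partial F^{jl}}{\partial u_{ik}}(\xi)\right|<\varepsilon_{0}\qquad\text{for all }x\in B_{r},
\]
which is precisely \eqref{cons1}. (Had $u$ been merely $W^{2,\infty}$, $D^{2}u$ could oscillate near $0$ and \eqref{cons1} could fail; the $C^{1}$ geometric hypothesis is exactly what rules this out.)

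Rescaling $B_{r}$ to the unit ball via $u(x)\mapsto r^{-2}u(rx)$ --- under which the class of equations \eqref{eqn:Intro} together with conditions \eqref{elliptic:Intro} and \eqref{cons1} is invariant --- Theorem~\ref{main1:Intro} applies and gives $u\in C^{\infty}(B_{r})$. Hence $\Phi(L)$, and therefore $L$, is smooth in a neighborhood of $p$; as $p\in L$ was arbitrary, $L$ is smooth. (Compactness of $M$ is used only to make the Hamiltonian deformations and the metric normalization of \cite{JLS} available uniformly; the regularity conclusion itself is local.) The main obstacle is not a single estimate but the reduction: one must translate the geometric Hamiltonian-stationary condition into the precise double-divergence equation \eqref{eqn:Intro} with coefficients controlled by the chart metric, for which \cite[Prop.~3.2 and Prop.~3.4]{JLS} are essential, and then observe that ``$L$ is a $C^{1}$ submanifold'' is exactly what promotes the potential to $C^{2}$, so that the oscillation condition \eqref{cons1} can be met by shrinking the coordinate ball around an arbitrary point. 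With ellipticity and closeness thus secured by a Darboux chart in which $\bar g$ is near Euclidean and the $2$-jet of $u$ vanishes at the center, Theorem~\ref{main1:Intro} completes the proof.
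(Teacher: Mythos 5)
Your proposal is correct and follows essentially the same route as the paper: pass to the Joyce--Lee--Schoen Darboux chart aligned with $T_pL$, represent $L$ locally as a gradient graph of a potential $u$ with vanishing 2-jet at the origin, observe that $C^1$ regularity of $L$ gives $u\in C^2$ so $D^2u$ is continuous, use that continuity to arrange the closeness condition \eqref{cons1} on a small ball, rescale, and invoke Theorem~\ref{main1:Intro}. You skip the paper's intermediate packaging through Theorem~\ref{3p1} and Lemma~\ref{ellipticity}, but the content is the same.

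One small inaccuracy in your write-up: you assert that by shrinking $B_r$ the chart metric $\bar g$ becomes ``as $C^2(B_r)$-close to Euclidean as we wish.'' That is not what \cite[Prop.~3.2, 3.4]{JLS} gives, and shrinking the ball alone does not shrink $\|D\bar g\|$ or $\|D^2\bar g\|$; the paper handles this via the explicit dilation in Proposition~\ref{Darboux}(5), which gives only a $C^1$ smallness estimate. Fortunately, this does not affect your argument: the conditions you actually verify, \eqref{elliptic:Intro} and \eqref{cons1}, only concern $\partial F^{jl}/\partial u_{ik}$, which is a function of $\bar g$ and $(Du,D^2u)$ but not of the derivatives of $\bar g$, so $C^0$-closeness of $\bar g$ to the identity (available by shrinking, since $\bar g(0)=h_0$) plus smallness of $D^2u$ (from continuity and $D^2u(0)=0$) suffice. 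The estimate $\|D\bar g\|$ small is used in the paper to control the lower-order coefficients, but those do not enter \eqref{cons1}. So the claim should be softened to $C^0$-closeness, with the derivative control carried by the final rescaling rather than by shrinking; after that correction the argument is sound.
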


As critical points of the volume functional on submanifolds, Theorem
\ref{main2:Intro} may be compared to some of the
classical statements for minimal submanifolds. For minimal submanifolds
(stationary for all smooth variations with compact support), a classical
theorem of Morrey states: $C^{1}$-regular minimal submanifolds are smooth
\cite[Theorem 10.7.1]{MorreyBook}. On the other hand, Lawson-Osserman
\cite{LO} constructed enlightening examples demonstrating existence of
Lipschitz minimal submanifolds (even graphical) that are not $C^{1}$. More
generally, the regularity theory developed in \cite{GM} for second order
elliptic systems does not seem to have direct impact on our single equation of
higher order on a scalar function.

Convergence of a sequence of Hamiltonian stationary Lagrangian submanifolds
was studied by Chen-Warren in \cite{CW2}, the analysis therein, especially the
smoothness estimates and $\varepsilon$-regularity, requires decomposing the
fourth order operator into the form dealt with in \cite{CW}, therefore only
established for $\mathbb{C}^{n}$. For a general K\"ahler background,
techniques special to surfaces, such as conformality and bubble tree
convergence with roots in the development of minimal surfaces, harmonic maps
and $J$-holomorphic curves, were used to prove compactness statements in
Chen-Ma \cite{ChenMa} and Schoen-Wolfson \cite{SW}. In light of the new
treatment about regularity in this paper, we will investigate the compactness
question in a K\"ahler manifold of any dimension in a forthcoming paper.

The organization of the paper is as follows: in section \ref{sec_fourth}, we
introduce in detail the class of fourth order nonlinear equations and develop
a regularity theory. In section \ref{sec_EL}, we derive the Euler-Lagrange
equations for the variational problem on a Riemannian ball and show that it
takes the form of the fourth order equation discussed in section
\ref{sec_fourth}. Finally, in section \ref{sec_ main}, we prove regularity for
Hamiltonian stationary Lagrangian submanifolds in a symplectic manifold.

\bigskip

\textbf{Notations.} Through out this paper, we use $B_{r}$ to denote a ball
with radius $r$ and center at the origin in $\mathbb{R}^{n}$, unless specified otherwise.

\section{Fourth order elliptic theory}


\label{sec_fourth}

\subsection{Preliminaries}

\label{pre} We consider the following fourth order equation, written in double
divergence form:
\begin{equation}
\int_{B_{1}}\left[  F^{jl}(x,Du,D^{2}u)\eta_{jl}+a^{k}(x,Du,D^{2}u)\eta
_{k}+b(x,Du,D^{2}u)\eta\right]  dx=0 \label{main}%
\end{equation}
for all $\eta\in C_{c}^{\infty}(B_{1})$ where $B_{1}$ is the unit ball in
$\mathbb{R}^{n}$. The coefficients are smooth in the entries $(x,Du,D^{2}u)$
over a given convex region $U\subset\mathbb{R}^{n}\times\mathbb{R}^{n}\times
S^{n\times n}.$ Lower indices on a function stand for partial derivatives,
e.g. $\eta_{jl},\eta_{k}$, and summation convention is assumed. \newline

\ We write $h_{p}=he_{p}$ and denote the difference quotient of $u$ in the
$e_{p}$ direction by $u^{h_{p}}$. We start by deriving a difference quotient
expression from (\ref{main}) in the direction $h_{p}$. Fixing a compactly
supported function $\eta$ we can choose $h$ small enough so the function
\begin{equation}
\eta^{-h_{p}}(x)=\frac{\eta(x-h_{p})-\eta(x)}{h} \label{etad}%
\end{equation}
is a valid test function. Using a change of variables $x\rightarrow x+h_{p}$
on the first term of (\ref{etad}) with the first two terms of (\ref{main}) and
recombining, we get%
\begin{equation}
\int_{B_{1}}\bigg([F^{jl}(x,Du,D^{2}u)]^{h_{p}}\eta_{jl}+a^{k}(x,Du,D^{2}%
u)\eta_{k}^{-h_{p}}+b(x,Du,D^{2}u)\eta^{-h_{p}}\bigg)dx=0. \label{diff1}%
\end{equation}

The function $F^{jl}$ is defined on open subsets of the vector space so for
any fixed $x$ where $D^2u(x)$ is defined we can define%

\begin{align*}
\xi_{0}  &  =\left(  x,Du(x),D^{2}u(x)\right)  \in%
\mathbb{R}
^{n}\times%
\mathbb{R}
^{n}\times S^{n\times n}\\
\xi_{h}  &  =\left(  x+h_{p},Du(x+h_{p}),D^{2}u(x+h_{p})\right)  \in%
\mathbb{R}
^{n}\times%
\mathbb{R}
^{n}\times S^{n\times n}\\
\vec{V}  &  =\xi_{h}-\xi_{0}%
\end{align*}
in which case we have%
\begin{align*}
\lbrack F^{jl}  &  (x,Du,D^{2}u)]^{h_{p}}=\frac{1}{h}\{F^{jl}(\xi_{0}+\vec
{V})-F^{jl}(\xi_{0})\}\\
&  =\frac{1}{h}\int_{0}^{1}\frac{d}{dt}F^{jl}(\xi_{0}+t\vec{V})dt\\
&  =\frac{1}{h}\int_{0}^{1}DF^{jl}|_{\xi_{0}+t\vec{V}}\cdot\vec{V}dt\\
&  =\int_{0}^{1}\frac{\partial F^{jl}}{\partial u_{ik}}({\xi_{0}+t\vec{V}%
})\cdot u_{ik}^{h_{p}}dt+\int_{0}^{1}\left(  \frac{\partial F^{jl}}{\partial
u_{k}}({\xi_{0}+t\vec{V}})u_{k}^{h_{p}}+\frac{\partial F^{jl}}{\partial x_{p}%
}(\xi_{0}+t\vec{V})\right)  dt\\
&  =\left(  \int_{0}^{1}\frac{\partial F^{jl}}{\partial u_{ik}}({\xi_{0}%
+t\vec{V}})dt\right)  \cdot u_{ik}^{h_{p}}+\int_{0}^{1}\left(  \frac{\partial
F^{jl}}{\partial u_{k}}({\xi_{0}+t\vec{V}})u_{k}^{h_{p}}+\frac{\partial
F^{jl}}{\partial x_{p}}(\xi_{0}+t\vec{V})\right)  dt\\
&  =\beta^{ij,kl}\cdot u_{ik}^{h_{p}}+\gamma_{1}^{jl,k}u_{k}^{h_{p}}%
+\gamma_{2}^{jl}%
\end{align*}
where we define
\begin{equation}
\beta^{ij,kl}(x)=\int_{0}^{1}\frac{\partial F^{jl}}{\partial u_{ik}}(\xi
_{0}+t\vec{V})dt \label{bterm}%
\end{equation}
and
\begin{align}
\gamma_{1}^{jl,k}(x)  &  =\int_{0}^{1}\frac{\partial F^{jl}}{\partial u_{k}%
}({\xi_{0}+t\vec{V}})dt\label{gamma1}\\
\gamma_{2}^{jl}(x)  &  =\int_{0}^{1}\frac{\partial F^{jl}}{\partial x_{p}}%
(\xi_{0}+t\vec{V})dt. \label{gamma2}%
\end{align}

Letting $f=u^{h_{p}}$ and
\begin{align}
\psi^{k}\left(  x\right)   &  =a^{k}(x,Du,D^{2}u)\label{gamma3}\\
\zeta(x)  &  =b(x,Du,D^{2}u), \label{gamma4}%
\end{align}
we arrive the following equation by plugging the above expressions into
(\ref{diff1}) governing the difference quotients
\[
\int_{B_{1}}\left(  \beta^{ij,kl}f_{ik}\eta_{jl}+\gamma_{1}^{jl,k}f_{k}%
\eta_{jl}+\gamma_{2}^{jl}\eta_{jl}+\psi^{k}\eta_{k}^{-h_{p}}+\zeta\eta
^{-h_{p}}\right)  dx=0.
\]
This linearized equation, which holds true provided $\eta\in C_{c}^{\infty
}(B_{1-h})$ governs difference quotients for solutions to (\ref{main}).
\ Further simplifying notation we define
\begin{equation}
\gamma^{jl}(x)=\int_{0}^{1}\left(  \frac{\partial F^{jl}}{\partial u_{k}}%
({\xi_{0}+t\vec{V}})f_{k}+\frac{\partial F^{jl}}{\partial x_{p}}(\xi_{0}%
+t\vec{V})\right)  dt \label{f}%
\end{equation}
to get
\begin{equation}
\int_{B_{1}}\left(  \beta^{ij,kl}f_{ik}\eta_{jl}+\gamma^{jl}\eta_{jl}+\psi
^{k}\eta_{k}^{-h_{p}}+\zeta\eta^{-h_{p}}\right)  dx=0. \label{main3}%
\end{equation}

\medskip Observe that since we do not start with a continuous Hessian, we
leave the expressions for the above leading coefficients in their integral form.

\begin{definition}
We define the nonlinear fourth order equation (\ref{main}) to be $\Lambda
$-\textbf{uniform} on a convex neighborhood $U\subset\mathbb{R}^{n}%
\times\mathbb{R}^{n}\times S^{n\times n}$ if the standard Legendre ellipticity
condition is satisfied for any $\xi\in U $%

\begin{equation}
\frac{\partial F^{jl}}{\partial u_{ik}}(\xi)\sigma_{ij}\sigma_{kl}\geq
\Lambda\left\Vert \sigma\right\Vert ^{2},\text{ $\forall$ }\sigma\text{ $\in
S^{n\times n}$}. \label{Bcondition_1}%
\end{equation}

\end{definition}

\medskip

\begin{remark}
\label{same_lambda}While this definition is tailored to equations of the form
(\ref{main}) it is important to note that it also applies to linear equations
of the form (\ref{main3}), in which case
\[
F^{jl}(x)=\beta^{ij,kl}(x)f_{ik}+\gamma^{jl}(x)
\]
and
\[
\frac{\partial F^{jl}}{\partial u_{ik}}=\beta^{ij,kl}(x).
\]
Thus when the nonlinear equation
(\ref{main}) is $\Lambda$-uniform, then so is the linearized equation
(\ref{main3}).
\end{remark}

\medskip We will use the following results to prove higher regularity in
section \ref{regresults}. We state the results here for the convenience of
the reader.

\begin{theorem}
\label{five}\cite[Theorem 2.1]{BW1}. Suppose $w\in W^{2,2}(B_{r})$ satisfies
the $\Lambda$-uniform constant coefficient equation%
\begin{align*}
\int c_{0}^{ik,jl}w_{ik}\eta_{jl}dx =0, \ \ \ \forall\eta\in C_{0}^{\infty
}(B_{r}). \label{ccoef}%
\end{align*}
Then for any $0<\rho\leq r$ there holds%

\begin{align*}
\int_{B_{\rho}}|D^{2}w|^{2}  &  \leq C_{1}\left(  \frac{\rho}{r}\right)
^{n}||D^{2}w||_{L^{2}(B_{r})}^{2},\\
\int_{B_{\rho}}|D^{2}w-(D^{2}w)_{\rho}|^{2}  &  \leq C_{2}\left(  \frac{\rho
}{r}\right)  ^{n+2}\int_{B_{r}}|D^{2}w-(D^{2}w)_{r}|^{2}%
\end{align*}
where $C_{1},C_{2}$ depend on the ellipticity constant $\Lambda$ and
$(D^{2}w)_{\rho}$ is the average value\ of $D^{2}w$ on a ball of radius $\rho$.
\end{theorem}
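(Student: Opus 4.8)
\textbf{Proof proposal for Theorem \ref{five} (the constant-coefficient decay estimates).}

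The plan is to treat the constant-coefficient double divergence equation $\int c_0^{ik,jl} w_{ik}\eta_{jl}\,dx = 0$ exactly as one treats harmonic functions in the De Giorgi--Campanato method for second order equations, but one order up. First I would establish interior smoothness of $w$: since the equation has constant coefficients and is Legendre-elliptic, difference quotients $w^{h_p}$ of a $W^{2,2}$ solution again solve the same equation (the test function $\eta^{-h_p}$ is admissible for $\eta \in C_c^\infty(B_{r'})$ with $r' < r$), and the $\Lambda$-uniform ellipticity gives, via integration by parts and Garding's inequality applied to $w^{h_p}$ itself as test function (after a cutoff), a Caccioppoli-type bound $\|D^3 w\|_{L^2(B_{r'})} \le C(r,r')\|D^2 w\|_{L^2(B_r)}$. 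Iterating on shrinking balls yields $w \in C^\infty(B_r)$ with all interior derivative bounds controlled by $\|D^2 w\|_{L^2(B_r)}$. This is the standard constant-coefficient regularity argument and I expect it to be routine; the key structural point is merely that $c_0^{ik,jl}$ is symmetric in the pair-exchange needed so that the bilinear form $\int c_0^{ik,jl} v_{ik}\eta_{jl}$ is coercive on $\dot W^{2,2}$ modulo lower order (which follows from \eqref{Bcondition_1}).

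Next I would prove the first (energy) decay estimate. Because $w$ is now smooth, for $0 < \rho \le r/2$ one can compare $w$ on $B_\rho$ with its second-order Taylor polynomial, or more directly use the mean value / sub-solution property: the point is that $|D^2 w|^2$ is, up to the ellipticity structure, comparable to a subsolution of a second order elliptic equation, so $\sup_{B_{\rho}}|D^2 w|^2 \le C r^{-n}\int_{B_r}|D^2 w|^2$ by standard interior $L^2$-to-$L^\infty$ estimates; then $\int_{B_\rho}|D^2 w|^2 \le |B_\rho|\sup_{B_{\rho}}|D^2 w|^2 \le C(\rho/r)^n\int_{B_r}|D^2 w|^2$. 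For $r/2 < \rho \le r$ the inequality is trivial after adjusting $C_1$. Alternatively, and perhaps cleaner, I would invoke a Campanato-type argument: test the equation with $\eta$ supported in $B_\rho$ and use the Euler--Lagrange property to show $w$ minimizes $\int c_0^{ik,jl} v_{ik} v_{jl}$ among competitors with the same second-order boundary data, giving the monotone-quantity comparison directly.

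For the second estimate, the oscillation decay, the strategy is the classical trick: the function $v = w - q$, where $q$ is the quadratic polynomial with $D^2 q = (D^2 w)_r$, also solves the constant-coefficient equation (polynomials of degree $\le 2$ lie in the kernel of the fourth order operator, or at worst contribute a term that integrates to zero against $\eta_{jl}$). Now apply the \emph{first} estimate not to $w$ but to the third derivatives: differentiating the equation, $D_m w$ solves the same equation, so $\int_{B_\rho}|D^3 w|^2 \le C(\rho/r)^n \int_{B_{r/2}}|D^3 w|^2$, and then a Caccioppoli/Poincar\'e sandwich gives $\int_{B_{r/2}}|D^3 w|^2 \le C r^{-2}\int_{B_r}|D^2 w - (D^2 w)_r|^2$; combining with Poincar\'e's inequality $\int_{B_\rho}|D^2 w - (D^2 w)_\rho|^2 \le C\rho^2 \int_{B_\rho}|D^3 w|^2$ produces the gain $(\rho/r)^{n+2}$. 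The main obstacle I anticipate is bookkeeping the constant-coefficient kernel correctly — verifying that subtracting the averaged-Hessian quadratic does not spoil the equation and that the Caccioppoli inequality for $D^3 w$ in terms of the $L^2$ norm of $D^2 w - (D^2 w)_r$ (rather than of $D^2 w$ itself) holds with the right scaling — but since all coefficients are constant this is a standard, if slightly tedious, interpolation-on-annuli computation, and no genuinely new idea is required beyond the second order template.
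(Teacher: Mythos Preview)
The paper does not actually prove this statement: Theorem~\ref{five} is quoted verbatim from \cite[Theorem 2.1]{BW1} and used as a black box, so there is no proof here to compare against. Your outline is the standard Campanato-type argument for constant-coefficient higher-order elliptic equations and is essentially correct; in particular, the chain ``differentiate the equation $\Rightarrow$ Caccioppoli for $\partial_m w$ against $D^2w-(D^2w)_r$ $\Rightarrow$ first estimate applied to $D^3w$ $\Rightarrow$ Poincar\'e on $B_\rho$'' cleanly produces the $(\rho/r)^{n+2}$ gain, and this is almost certainly what \cite{BW1} does as well.

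Two small remarks. First, your aside that ``$|D^2w|^2$ is, up to the ellipticity structure, comparable to a subsolution of a second order elliptic equation'' is not literally true for a general Legendre-elliptic fourth-order system and should be dropped; the route you mention immediately afterward---iterated difference-quotient Caccioppoli gives $w\in C^\infty$ with $\sup_{B_{r/2}}|D^2w|\le Cr^{-n/2}\|D^2w\|_{L^2(B_r)}$ via Sobolev embedding, then $\int_{B_\rho}|D^2w|^2\le |B_\rho|\sup|D^2w|^2$---is the correct justification for the first inequality. Second, the ``symmetry in the pair-exchange'' you invoke is not needed for coercivity (the Legendre condition \eqref{Bcondition_1} gives that directly on symmetric $\sigma=D^2v$); symmetry of the bilinear form is a separate fact, obtained in the paper by integrating by parts four times as in the proof of Lemma~\ref{LM}, and you only need it if you want to phrase things variationally.
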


\medskip

\begin{corollary}
\label{Cor2} \cite[Corollary 2.2]{BW1}. \textit{Suppose }$w$\textit{ is as in
the Theorem \ref{five}. Then for any~}$u\in W^{2,2}(B_{r}),$ and\textit{ for
any~} $0<\rho\leq r,$ there holds%
\[
\int_{B_{\rho}}\left\vert D^{2}u\right\vert ^{2}\leq4C_{1}\left(  \frac{\rho
}{r}\right)  ^{n}\left\Vert D^{2}u\right\Vert _{L^{2}(B_{r})}^{2}+\left(
2+8C_{1}\right)  \left\Vert D^{2}(w-u)\right\Vert _{L^{2}(B_{r})}^{2}
\label{twothree}%
\]
and
\begin{align*}
\int_{B_{\rho}}\left\vert D^{2}u-(D^{2}u)_{\rho}\right\vert ^{2} \leq
4C_{2}\left(  \frac{\rho}{r}\right)  ^{n+2}\int_{B_{r}}\left\vert
D^{2}u-(D^{2}u)_{r}\right\vert ^{2} +\left(  8+16C_{2}\right)  \int_{B_{r}%
}\left\vert D^{2}(u-w)\right\vert ^{2} \label{twofive}%
\end{align*}
where $C_{1}, C_{2}$ depend on the ellipticity constant $\Lambda.$

\end{corollary}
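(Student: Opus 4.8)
The plan is to deduce both estimates from Theorem \ref{five} by the standard device of inserting and removing the comparison function $w$ and then controlling the resulting error terms by the triangle inequality in $L^{2}$. Write $u=w+(u-w)$ on $B_{r}$; since $w$ solves the constant-coefficient $\Lambda$-uniform equation on $B_{r}$, Theorem \ref{five} applies to $w$ on every concentric ball $B_{\rho}$ with $0<\rho\leq r$, and the two displayed conclusions of that theorem are exactly what feed into the two displayed conclusions here.

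For the first inequality I would begin with $\int_{B_{\rho}}|D^{2}u|^{2}\leq 2\int_{B_{\rho}}|D^{2}w|^{2}+2\int_{B_{\rho}}|D^{2}(u-w)|^{2}$, then apply the first estimate of Theorem \ref{five} to the $w$-term to get $\int_{B_{\rho}}|D^{2}w|^{2}\leq C_{1}(\rho/r)^{n}\int_{B_{r}}|D^{2}w|^{2}$, and finally replace $\int_{B_{r}}|D^{2}w|^{2}$ by $2\int_{B_{r}}|D^{2}u|^{2}+2\int_{B_{r}}|D^{2}(w-u)|^{2}$. Using $\rho\leq r$ to bound $(\rho/r)^{n}\leq 1$ on the error terms only, together with $\int_{B_{\rho}}|D^{2}(u-w)|^{2}\leq\int_{B_{r}}|D^{2}(u-w)|^{2}$, and collecting terms, one lands on a bound of the stated form $4C_{1}(\rho/r)^{n}\|D^{2}u\|_{L^{2}(B_{r})}^{2}+(2+8C_{1})\|D^{2}(w-u)\|_{L^{2}(B_{r})}^{2}$ after keeping careful track of the numerical factors produced by each application of $(a+b)^{2}\leq 2a^{2}+2b^{2}$.

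For the second (Campanato-type) inequality I would use that $(D^{2}u)_{\rho}-(D^{2}w)_{\rho}=(D^{2}(u-w))_{\rho}$, so that $D^{2}u-(D^{2}u)_{\rho}=\bigl[D^{2}w-(D^{2}w)_{\rho}\bigr]+\bigl[D^{2}(u-w)-(D^{2}(u-w))_{\rho}\bigr]$. Applying $(a+b)^{2}\leq 2a^{2}+2b^{2}$, then the second estimate of Theorem \ref{five} to the $w$-term, the elementary fact that subtracting the mean does not increase the $L^{2}$ norm (hence $\int_{B_{\rho}}|D^{2}(u-w)-(D^{2}(u-w))_{\rho}|^{2}\leq\int_{B_{r}}|D^{2}(u-w)|^{2}$), and finally bounding $\int_{B_{r}}|D^{2}w-(D^{2}w)_{r}|^{2}\leq 2\int_{B_{r}}|D^{2}u-(D^{2}u)_{r}|^{2}+2\int_{B_{r}}|D^{2}(w-u)|^{2}$, with $(\rho/r)^{n+2}\leq 1$ used only on the error term, yields the claimed estimate with constants $4C_{2}$ and $8+16C_{2}$.

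I do not anticipate a genuine obstacle: the substance is entirely contained in Theorem \ref{five}, and this corollary is a bookkeeping exercise with Cauchy--Schwarz/Young inequalities. The only point requiring minor care is retaining the scaling factors $(\rho/r)^{n}$ and $(\rho/r)^{n+2}$ solely on the leading terms while discarding them (as $\leq 1$) on the error terms, and tallying the numerical constants so that they come out depending on $\Lambda$ alone through $C_{1},C_{2}$.
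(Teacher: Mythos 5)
Your argument is correct, and it is the standard way to prove comparison estimates of this type; the paper itself gives no proof, citing \cite[Corollary 2.2]{BW1}. The decomposition $u=w+(u-w)$, the triangle inequality $(a+b)^{2}\leq 2a^{2}+2b^{2}$ applied twice, the observation that averages are linear so $(D^{2}u)_{\rho}-(D^{2}w)_{\rho}=(D^{2}(u-w))_{\rho}$, and the fact that subtracting the mean cannot increase an $L^{2}$ norm are exactly the ingredients needed, and you deploy them correctly. The one small inaccuracy is in the final tally: if you follow your own bookkeeping carefully, the coefficient on the error term $\left\Vert D^{2}(w-u)\right\Vert_{L^{2}(B_{r})}^{2}$ comes out to $4C_{1}+2$ in the first estimate and $4C_{2}+2$ in the second, not $2+8C_{1}$ and $8+16C_{2}$ as stated. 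That is not a flaw in the proof — since $4C_{1}+2\leq 2+8C_{1}$ and $4C_{2}+2\leq 8+16C_{2}$, your tighter constants imply the statement a fortiori; the cited source simply records looser constants, which is immaterial because only dependence on $\Lambda$ (through $C_{1},C_{2}$) is used downstream. You should just avoid claiming you reproduce the exact constants $8+16C_{2}$ when your own chain of inequalities delivers something smaller.
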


\medskip

\begin{lemma}
\label{HanLin} \cite[Lemma 3.4]{HanLin}. Let $\phi$ be a nonnegative and
nondecreasing function on $[0,R].$ \ Suppose that
\[
\phi(\rho)\leq A\left[  \left(  \frac{\rho}{r}\right)  ^{\alpha}%
+\varepsilon\right]  \phi(r)+Br^{\beta}%
\]
for any $0<\rho\leq r\leq R,$ with $A,B,\alpha,\beta$ nonnegative constants
and $\beta<\alpha.$ \ Then for any $\gamma\in(\beta,\alpha),$ there exists a
constant $\varepsilon^{\ast}=\varepsilon^{\ast}(A,\alpha,\beta,\gamma)$ such
that if $\varepsilon<\varepsilon^{\ast}$ we have for all $0<\rho\leq r\leq R$%
\[
\phi(\rho)\leq c\left[  \left(  \frac{\rho}{r}\right)  ^{\gamma}%
\phi(r)+Br^{\beta}\right]
\]
where $c$ is a positive constant depending on $A,\alpha,\beta,\gamma.$ \ In
particular, we have for any $0<r\leq R$%
\[
\phi(r)\leq c\left[  \frac{\phi(R)}{R^{\gamma}}r^{\gamma}+Br^{\beta}\right]
.
\]

\end{lemma}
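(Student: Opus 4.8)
The plan is to run a standard dyadic iteration, choosing a contraction ratio first and the smallness threshold afterward. First I would fix $\tau\in(0,1)$, depending only on $A,\alpha,\gamma$, small enough that $2A\tau^{\alpha}\le\tau^{\gamma}$; such $\tau$ exists precisely because $\gamma<\alpha$, so $\tau^{\alpha-\gamma}\to0$ as $\tau\to0$. With this $\tau$ frozen, I would set $\varepsilon^{\ast}:=\tau^{\alpha}$, which depends only on $A,\alpha,\gamma$ and hence also on $A,\alpha,\beta,\gamma$. Then for $\varepsilon<\varepsilon^{\ast}$, applying the hypothesis with $\rho=\tau r$ gives, for every $0<r\le R$,
\[
\phi(\tau r)\le A(\tau^{\alpha}+\varepsilon)\phi(r)+Br^{\beta}\le 2A\tau^{\alpha}\phi(r)+Br^{\beta}\le\tau^{\gamma}\phi(r)+Br^{\beta}.
\]

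Next I would iterate this one-step contraction. Replacing $r$ by $\tau^{k}r$ and inducting on $k$ yields
\[
\phi(\tau^{k}r)\le\tau^{k\gamma}\phi(r)+Br^{\beta}\sum_{j=0}^{k-1}\tau^{j\gamma}\tau^{(k-1-j)\beta}.
\]
Since $\beta<\gamma$ (automatic, as $\gamma$ is chosen in $(\beta,\alpha)$), the geometric sum equals $\tau^{(k-1)\beta}\sum_{j=0}^{k-1}(\tau^{\gamma-\beta})^{j}$, which is bounded by $\tau^{(k-1)\beta}(1-\tau^{\gamma-\beta})^{-1}$, independently of $k$. After relabeling this reads $\phi(\tau^{k}r)\le\tau^{k\gamma}\phi(r)+c_{1}B(\tau^{k}r)^{\beta}$ with $c_{1}=c_{1}(\tau,\beta,\gamma)$, hence $c_{1}$ depends only on $A,\alpha,\beta,\gamma$.

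Finally I would pass from the discrete scales $\tau^{k}r$ to an arbitrary $0<\rho\le r$ using that $\phi$ is nondecreasing. Given such $\rho\le r$, choose $k\ge0$ with $\tau^{k+1}r<\rho\le\tau^{k}r$; then $\phi(\rho)\le\phi(\tau^{k}r)$, while $\tau^{k}r<\tau^{-1}\rho$ gives $\tau^{k\gamma}<\tau^{-\gamma}(\rho/r)^{\gamma}$, and $\tau^{k}r\le r$ gives $(\tau^{k}r)^{\beta}\le r^{\beta}$. Combining,
\[
\phi(\rho)\le\tau^{-\gamma}\Big(\tfrac{\rho}{r}\Big)^{\gamma}\phi(r)+c_{1}Br^{\beta}\le c\Big[\Big(\tfrac{\rho}{r}\Big)^{\gamma}\phi(r)+Br^{\beta}\Big]
\]
with $c=\max(\tau^{-\gamma},c_{1})$ depending only on $A,\alpha,\beta,\gamma$, and the "in particular" assertion is the case $r=R$.

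There is no genuine obstacle here; the only points needing care are the order of the choices — $\tau$ must be fixed before $\varepsilon^{\ast}$ is defined, since $\varepsilon^{\ast}$ is built from $\tau$ — the convergence of the geometric series in the inhomogeneous term with a constant uniform in $k$ (this is where $\beta<\gamma$, rather than merely $\beta<\alpha$, is used), and the final dyadic interpolation, for which monotonicity of $\phi$ is exactly the tool that upgrades the estimate at the scales $\tau^{k}r$ to one valid for all $\rho\le r$.
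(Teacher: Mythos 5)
The paper does not prove this lemma — it cites it directly from Han--Lin, so there is no in-paper proof to compare against; your dyadic iteration is exactly the standard argument one finds in Han--Lin or Giaquinta. The iteration itself is correct: the order of quantifiers (fix $\tau$, then set $\varepsilon^{\ast}=\tau^{\alpha}$), the induction, and the geometric-sum bound using $\beta<\gamma$ are all in order, and the interpolation from the scales $\tau^{k}r$ to arbitrary $\rho$ via monotonicity is the right tool.

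There is, however, one small wiring error at the very end. You bound $(\tau^{k}r)^{\beta}\le r^{\beta}$, which yields the intermediate conclusion $\phi(\rho)\le c\bigl[(\rho/r)^{\gamma}\phi(r)+Br^{\beta}\bigr]$, and then you assert that the ``in particular'' statement is the case $r=R$. But substituting $\rho\mapsto r$, $r\mapsto R$ into that inequality produces $\phi(r)\le c\bigl[(r/R)^{\gamma}\phi(R)+BR^{\beta}\bigr]$, where the inhomogeneous term is the constant $BR^{\beta}$, not the decaying quantity $Br^{\beta}$ that the lemma (and its application in the paper, to deduce $\phi(r)\lesssim r^{\beta}$) requires. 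The fix is already contained in your own estimate: from $\tau^{k}r<\tau^{-1}\rho$ you should keep $(\tau^{k}r)^{\beta}<\tau^{-\beta}\rho^{\beta}$ rather than weaken to $r^{\beta}$. That gives the sharper intermediate form $\phi(\rho)\le c\bigl[(\rho/r)^{\gamma}\phi(r)+B\rho^{\beta}\bigr]$ (which is in fact how the lemma reads in Han--Lin; the $Br^{\beta}$ in the paper's transcription of the middle conclusion appears to be a typo), and then the ``in particular'' assertion really is the case $\rho=r$, $r=R$.
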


The following boundary value problem existence result should come as no
surprise, but is included for completeness.

\begin{lemma}
\label{LM} Suppose that $g\in W^{2,2}(B_{r})$, and $c_{0}^{ij,kl}$ is as in
Theorem \ref{five}. There exists a unique solution $w\in W^{2,2}(B_{r})$
solving the following BVP
\begin{align*}
\int_{B_{r}}c_{0}^{ij,kl}w_{ik}\eta_{jl}dx  &  =0, \ \ \ \text{ $\forall$}\eta\in
C_{0}^{\infty}(B_{r})\\
w  &  =g,\ \ D w=D g\ \ \ \ \ \text{on }\partial B_{r}(y).
\end{align*}

\end{lemma}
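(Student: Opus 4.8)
The plan is to solve this linear constant-coefficient fourth order boundary value problem by the standard variational (Lax--Milgram) method. First I would set up the correct function space and bilinear form: let $W_0^{2,2}(B_r)$ be the closure of $C_c^\infty(B_r)$ in $W^{2,2}$, which is the natural space in which the data $w-g$ should live so that both $w=g$ and $Dw=Dg$ hold on $\partial B_r$ in the trace sense. Define the bilinear form
\[
a(v,\varphi)=\int_{B_r} c_0^{ij,kl} v_{ik}\varphi_{jl}\,dx
\]
on $W_0^{2,2}(B_r)\times W_0^{2,2}(B_r)$. The problem is equivalent to finding $v\in W_0^{2,2}(B_r)$ with $a(v,\varphi)=-a(g,\varphi)$ for all $\varphi\in W_0^{2,2}(B_r)$, and then setting $w=v+g$; note $\varphi=\eta\in C_c^\infty$ is dense, so it suffices to test against such $\eta$.

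Next I would verify the hypotheses of Lax--Milgram. Boundedness of $a$ is immediate from Cauchy--Schwarz and $|c_0^{ij,kl}|\le C(\Lambda,n)$ (an upper bound on the constant coefficients is implicit, or can be included in the constant-coefficient data). The linear functional $\varphi\mapsto -a(g,\varphi)$ is bounded since $g\in W^{2,2}$. The crucial point is coercivity: from the Legendre condition \eqref{Bcondition_1} applied to the symmetric matrix $\sigma_{ij}=v_{ij}$ (which lies in $S^{n\times n}$ since $v\in W^{2,2}$, so $v_{ij}=v_{ji}$ as distributions), we get
\[
a(v,v)=\int_{B_r} c_0^{ij,kl}v_{ik}v_{jl}\,dx\ \ge\ \Lambda\int_{B_r}\|D^2 v\|^2\,dx.
\]
To upgrade $\|D^2 v\|_{L^2}^2$ to control the full $W^{2,2}$ norm on $W_0^{2,2}(B_r)$, I would invoke the Poincaré inequality twice: for $v\in W_0^{2,2}(B_r)$ one has $Dv\in W_0^{1,2}(B_r)$ componentwise, so $\|Dv\|_{L^2}\le C(r)\|D^2v\|_{L^2}$ and then $\|v\|_{L^2}\le C(r)\|Dv\|_{L^2}$. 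Hence $a(v,v)\ge c(\Lambda,r,n)\|v\|_{W^{2,2}}^2$, establishing coercivity. Lax--Milgram then yields existence and uniqueness of $v$, and $w=v+g$ is the desired solution; uniqueness of $w$ follows because the difference of two solutions lies in $W_0^{2,2}(B_r)$ and is killed by coercivity.

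I expect the only genuinely delicate point to be the coercivity step — specifically, the observation that the Legendre ellipticity \eqref{Bcondition_1}, which a priori only controls the contraction of $c_0$ against \emph{symmetric} matrices, does in fact control $\int c_0^{ij,kl}v_{ik}v_{jl}$ because the Hessian $D^2v$ of a $W^{2,2}$ function is symmetric almost everywhere, so no use of the "full" ellipticity on all matrices is needed. Everything else (density of $C_c^\infty$ in $W_0^{2,2}$, the Poincaré inequalities, boundedness) is routine. One small bookkeeping remark: the statement as written has "$\partial B_r(y)$" with a stray center $y$; since $B_r$ is centered at the origin throughout the paper, I would simply read this as $\partial B_r$, or note that a translation reduces to that case. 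It is also worth recording that the constants in the existence proof depend on $\Lambda$, $n$, and $r$, though $r$-dependence can be scaled away if desired.
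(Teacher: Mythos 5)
Your proposal is correct and follows essentially the same variational route as the paper: both reduce to solving for $v=w-g\in W_0^{2,2}(B_r)$, both obtain coercivity from the Legendre condition applied to the (symmetric) Hessian together with two applications of the Poincar\'e inequality, and both treat boundedness as routine. The only cosmetic difference is that you invoke Lax--Milgram, whereas the paper first verifies symmetry of the bilinear form (by integrating by parts on smooth approximants) and then applies the Riesz representation theorem; since the form is indeed symmetric, the two are interchangeable here.
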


\begin{proof}
By \cite[Corollary 6.48, 6.49]{Folland} the boundary condition is equivalent to $w-g \in  H_{0}^{2}(B_{r}) $. The problem will be solved if we can find a function $v = w-g \in H_{0}^{2}(B_{r})$ such that 
\[
\int_{B_{r}}c_{0}^{ij,kl}\left(  w-g\right)  _{ik}\eta_{jl}dx+\int_{B_{r}%
}c_{0}^{ij,kl}g_{ik}\eta_{jl}dx=0.
\]
So it suffices to solve the problem 
\begin{align*}
\int_{B_{r}}c_{0}^{ij,kl}v_{ik}\eta_{jl}dx  &  =-\int_{B_{r}}c_{0}%
^{ij,kl}g_{ik}\eta_{jl}dx\\
 v &\in  H_{0}^{2}(B_{r}). 
\end{align*}
First, we claim that
\begin{equation}
\langle\phi,\varphi\rangle=\int_{B_r} c_{0}^{ij,kl}\phi_{ik}\varphi_{jl}dx
\label{hilbert}%
\end{equation}
defines a Hilbert space norm on the function space $H_{0}^{2}(B_{r}).$ In
other words, the norm defined by (\ref{hilbert}) is equivalent to the
$W^{2,2}_0(B_{r})$ norm
and the inner product is symmetric. First note that by the Legendre condition
\[
\langle\phi,\phi\rangle\geq\Lambda_{1}\int_{B_r}\left\vert D^{2}\phi\right\vert ^{2}
\]
where $\Lambda_1$ depends on $\Lambda, n$, and because $c^{ij,kl}_{0}$ is bounded we have
\[
\langle\phi,\phi\rangle\leq\Lambda_{2}\int_{B_r}\left\vert D^{2}\phi\right\vert^{2}
\]
where $\Lambda_2$ depends on $n,\|c_0^{ij,kl}\|_{L^\infty}$ for $1\leq i,j,k,l,\leq n$. 
Using the Poincar\'e inequality \cite[(7.44)]{GT},
for any $\phi\in W_{0}^{2,2}$ (hence $D\phi\in W^{1,2}_{0}$)
\[
\frac{1}{C}\langle\phi,\phi\rangle\leq\left\Vert \phi\right\Vert
_{W^{2,2}(B_{r})}^{2}\leq C\langle\phi,\phi\rangle.
\]
Thus the norm $\langle\phi,\phi\rangle$ is continuous with respect to the
$W^{2,2}$ norm.

Next we argue symmetry of \eqref{hilbert}: For $\phi,\varphi\in H_{0}%
^{2}(B_{r})$ we may take $\phi_{m},\varphi_{m}\in C_{c}^{\infty}(B_{r})\cap
W^{2,2}(B_{r}),$ which converge respectively to $\phi,\varphi$ in
$W^{2,2\text{ }},$ as $m\rightarrow\infty.$ We have
\begin{align*}
\langle\phi,\varphi\rangle &  =\lim_{m\rightarrow\infty}\langle\phi
_{m},\varphi_{m}\rangle\\
&  =\lim_{m\rightarrow\infty}\int_{B_r} c_{0}^{ij,kl}\left(  \phi_{m}\right)
_{ik}\left(  \varphi_{m}\right)  _{jl}dx\\
&  =(-1)^{2}\lim_{m\rightarrow\infty}\int_{B_r} c_{0}^{ij,kl}\left(  \phi
_{m}\right)  _{ikjl}\left(  \varphi_{m}\right)  dx\\
&  =(-1)^{4}\lim_{m\rightarrow\infty}\int_{B_r} c_{0}^{ij,kl}\left(  \phi
_{m}\right)  _{jl}\left(  \varphi_{m}\right)  _{ik}dx\\
&  =\lim_{m\rightarrow\infty}\langle\varphi_{m},\phi_{m}\rangle\\
&  =\langle\varphi,\phi\rangle.
\end{align*}

The linear operator
\[
f(\phi)=-\int_{B_{r}}c_{0}^{ij,kl}g_{ik}\phi_{jl}dx
\]
on $W^{2,2}_{0}(B_{r})$ is bounded with respect to the norm defined by
(\ref{hilbert}). To see this, take any $\phi$ in $H_{0}^{2}(B_{r})$, then
\begin{align*}
f(\phi)  &  =-\int_{B_{r}}c_{0}^{ij,kl}g_{ik}\phi_{jl}dx\\
&\leq C_{1}\left\Vert
g\right\Vert _{W^{2,2}(B_{r})}\left\Vert \phi\right\Vert _{W^{2,2}(B_{r})}\\
&  \leq C_{1}\left\Vert g\right\Vert _{W^{2,2}(B_{r})}C_{2}\left(  \langle
\phi,\phi\rangle\right)  ^{1/2}.
\end{align*}
By the Riesz representation theorem, there is a unique solution $v\in
H_{0}^{2}(B_{r})$ such that
\[
f(\eta)=\langle\eta,v\rangle=\int_{B_r} c_{0}^{ij,kl}v_{ik}\eta_{jl}dx
\]
that is
\[
-\int_{B_{r}}c_{0}^{ij,kl}g_{ik}\eta_{jl}dx=\int_{B_r} c_{0}^{ij,kl}v_{ik}\eta
_{jl}dx.
\]

Thus we
can let
\[
w=v+g.
\]
This gives the solvability of the boundary value problem in $H^2_0(B_r)$. 
\end{proof}

\subsection{Main regularity results\label{regresults}}

We will establish Theorem \ref{main1:Intro} by first proving the solution is
$C^{2,\alpha}$ and then by bootstrapping for smoothness. We state our two main
regularity boosting results below.


\begin{theorem}
\label{main_1} Suppose that $u\in W^{2,\infty}(B_{1})$ is a weak solution of
the $\Lambda$-uniform equation (\ref{main}) on $B_{1}$, such that
\[
\left\{  (x,Du(x),D^{2}u(x)):x\in B_{1}\right\}  \subset U.
\]
Fix $\alpha\in(0,1)$ and let $q=\frac{n}{2(1-\alpha)}.$ \ There
exists an $\varepsilon_{0}>0$, depending only on $\Lambda, \alpha$ and $n$
such that if the coefficients $\beta^{ij,kl}$ given by (\ref{bterm}) satisfy
\begin{equation}
\label{cons}\left|  \beta^{ij,kl}(x,Du,D^{2}u)-a_{0}^{ij,kl}\right|
<\varepsilon_{0}%
\end{equation}
where $a_{0}^{ij,kl}=\frac{\partial F^{jl}}{\partial u_{ik}}\left(
\xi\right)  $ for some $\xi\in U,$ then $u\in C^{2,\alpha}(B_{1})$ with
\[
||D^{2}u||_{C^{\alpha}(B_{1/4})}\leq C(\Lambda, \alpha,||u||_{W^{2,\infty
}(B_{1})}, \left\Vert DF \right\Vert _{L^{\infty}(U)},\left\Vert
a^{k}\right\Vert _{L^{\infty}(U)}, \left\Vert b \right\Vert _{L^{\infty}%
(U)}).
\]

\end{theorem}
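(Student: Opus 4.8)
# Proof Proposal for Theorem \ref{main_1}

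The plan is to argue entirely through the linearized difference-quotient equation \eqref{main3} for $f=u^{h_{p}}$. Its leading coefficient $\beta^{ij,kl}$ is, by hypothesis \eqref{cons}, within $\varepsilon_{0}$ of the constant matrix $a_{0}^{ij,kl}=\frac{\partial F^{jl}}{\partial u_{ik}}(\xi)$, while the remaining terms $\gamma^{jl}$, $\psi^{k}=a^{k}(x,Du,D^{2}u)$ and $\zeta=b(x,Du,D^{2}u)$ are bounded, with norms controlled by $\|u\|_{W^{2,\infty}}$, $\|DF\|_{L^{\infty}(U)}$, $\|a^{k}\|_{L^{\infty}(U)}$, $\|b\|_{L^{\infty}(U)}$. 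For each fixed $h\neq0$ one has $f\in W^{2,2}$, and \eqref{main3} extends to all test functions $\eta\in H_{0}^{2}$ on any subball, since $\|\eta^{-h_{p}}\|_{L^{2}}\leq\|D\eta\|_{L^{2}}$ and $\|\eta_{k}^{-h_{p}}\|_{L^{2}}\leq\|D^{2}\eta\|_{L^{2}}$ for $\eta$ extended by zero. The argument has two stages: first a bound on $f$ in $W^{2,2}$ that is uniform in $h$, so that $u\in W^{3,2}$; then a Campanato-type decay for $D^{2}f$, again uniform in $h$, which forces $u\in C^{2,\alpha}$.

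\emph{Stage one: uniform $W^{3,2}$.} Test \eqref{main3} with $\eta=\chi^{2}f$, where $\chi$ is a cut-off supported between nested balls $B_{\rho}\subset B_{\tau}\subset B_{3/4}$, $\chi\equiv1$ on $B_{\rho}$, $|D^{m}\chi|\lesssim(\tau-\rho)^{-m}$. By \eqref{Bcondition_1} the principal term $\int\beta^{ij,kl}\chi^{2}f_{ik}f_{jl}$ is bounded below by $\Lambda\int\chi^{2}|D^{2}f|^{2}$; every other term either carries a derivative of $\chi$ (hence is supported in the annulus $B_{\tau}\setminus B_{\rho}$) or is genuinely lower order, and Cauchy--Schwarz with Young's inequality leaves only a small multiple of $\int_{B_{\tau}}|D^{2}f|^{2}$ plus $C(\tau-\rho)^{-4}$ times the data norms. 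The crucial point is that $\int\psi^{k}\eta_{k}^{-h_{p}}$ and $\int\zeta\eta^{-h_{p}}$ are estimated by passing to $\|D^{2}\eta\|_{L^{2}}$, \emph{never} moving the difference quotient onto the merely bounded coefficients. Absorbing the $\chi^{2}$-weighted part yields, for all $1/2\leq\rho<\tau\leq3/4$,
\[
\int_{B_{\rho}}|D^{2}f|^{2}\leq\theta\int_{B_{\tau}}|D^{2}f|^{2}+\frac{C}{(\tau-\rho)^{4}}
\]
with $\theta<1$ and $C$ depending only on $\Lambda$, $n$ and the data norms; a standard iteration lemma then gives $\int_{B_{1/2}}|D^{2}u^{h_{p}}|^{2}\leq C$ uniformly in $h$, so letting $h\to0$ we obtain $u\in W^{3,2}(B_{1/2})$.

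\emph{Stage two: Campanato decay.} Fix $x_{0}\in B_{1/4}$ and $B_{r}=B_{r}(x_{0})\subset B_{3/8}$. Let $w$ solve the constant-coefficient boundary value problem of Lemma \ref{LM} with $c_{0}^{ij,kl}=a_{0}^{ij,kl}$ and $w=f$, $Dw=Df$ on $\partial B_{r}$, so $f-w\in H_{0}^{2}(B_{r})$. Using $f-w$ as a test function in \eqref{main3} and in the equation for $w$ and subtracting, the Legendre ellipticity of $c_{0}$ on the left and the closeness \eqref{cons} together with the boundedness of $\gamma,\psi,\zeta$ on the right give
\[
\|D^{2}(f-w)\|_{L^{2}(B_{r})}^{2}\leq C\varepsilon_{0}^{2}\,\|D^{2}f\|_{L^{2}(B_{r})}^{2}+C\,r^{n},
\]
the constant multiplying $\varepsilon_{0}^{2}\|D^{2}f\|^{2}$ depending only on $\Lambda,n$. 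Feeding this into Corollary \ref{Cor2} and writing $\phi(\rho)=\int_{B_{\rho}}|D^{2}f|^{2}$,
\[
\phi(\rho)\leq A\Big[\Big(\tfrac{\rho}{r}\Big)^{n}+\varepsilon_{0}^{2}\Big]\phi(r)+C\,r^{\,n-2(1-\alpha)}
\]
(using $r^{n}\leq r^{\,n-2(1-\alpha)}$ for $r\leq1$; the exponent $n-2(1-\alpha)=n-n/q<n$ is precisely where the choice $q=\tfrac{n}{2(1-\alpha)}$ enters), with $A$ depending only on $\Lambda,n$. Choosing $\varepsilon_{0}=\varepsilon_{0}(\Lambda,\alpha,n)$ so small that $\varepsilon_{0}^{2}$ is below the threshold of Lemma \ref{HanLin} (applied with exponents $n$ and $n-2(1-\alpha)$), that lemma gives $\int_{B_{r}(x_{0})}|D^{2}u^{h_{p}}|^{2}\leq C\,r^{\,n-2(1-\alpha)}$ for all such balls, with $C$ depending on $\Lambda,\alpha$, the data norms, and $\phi$ at the fixed scale $r=3/8$ --- which Stage one has made uniform in $h$. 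The Poincar\'e inequality upgrades this to $\int_{B_{r}(x_{0})}\big|(Du)^{h_{p}}-((Du)^{h_{p}})_{x_{0},r}\big|^{2}\leq C\,r^{\,n+2\alpha}$, and since $\|(Du)^{h_{p}}\|_{L^{\infty}(B_{1/4})}\leq\|D^{2}u\|_{L^{\infty}}$, the Campanato characterization of $C^{\alpha}$ bounds $\|(Du)^{h_{p}}\|_{C^{\alpha}(B_{1/4})}$ uniformly in $h$. Letting $h\to0$ (Arzel\`a--Ascoli, the limit being the distributional derivative $\partial_{p}Du$) we conclude $Du\in C^{1,\alpha}(B_{1/4})$, i.e. $u\in C^{2,\alpha}(B_{1/4})$, with the asserted bound.

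\emph{The main obstacle} is Stage one: producing a genuine extra derivative of $u$ out of $u\in W^{2,\infty}$ alone. Since $\beta^{ij,kl}$ is only measurable one cannot integrate by parts to reshape the equation, and the lower-order terms force the difference quotient to stay on the test function, so the energy estimate must be run purely in the weak double-divergence form, with hole-filling compensating for the lost control on the cut-off annulus. Once the uniform $W^{3,2}$ bound is available, Stage two is a standard perturbative comparison; the one point requiring care there is keeping every constant independent of $h$, which is exactly what the fixed-scale input from Stage one supplies.
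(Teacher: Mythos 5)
Your proof is correct and follows essentially the same two-stage route as the paper: Stage one is the paper's Proposition \ref{reg3} (the paper tests with $\tau^{4}f$ to absorb directly, while you test with $\chi^{2}f$ and invoke a standard iteration lemma, but both deliver the same $h$-uniform $W^{2,2}$ bound on the difference quotient and hence $u\in W^{3,2}$), and Stage two is the paper's Proposition \ref{reg}, built on the same ingredients — comparison against the constant-coefficient solution furnished by Lemma \ref{LM}, Corollary \ref{Cor2}, the Han--Lin iteration Lemma \ref{HanLin}, and the closeness hypothesis \eqref{cons} — with your Poincar\'e-plus-Campanato finish equivalent to the paper's appeal to Morrey's lemma. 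The one step you leave out is the paper's closing translation-and-rescaling argument (the substitution $v(y)=u(x_{0}+ry)/r^{2}$), which promotes the estimate you obtain on $B_{1/4}$ to the qualitative statement $u\in C^{2,\alpha}(B_{1})$; this is routine given the interior estimate you have established.
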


\begin{theorem}
\label{main_2} Suppose that $u\in C^{2,\alpha}(B_{1})$ satisfies the $\Lambda
$-uniform equation (\ref{main}) on $B_{1}$. Then $u$ is smooth in $B_{1}$.
\end{theorem}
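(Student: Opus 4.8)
The strategy is a bootstrap: starting from $u\in C^{2,\alpha}$, show that the difference quotient $f=u^{h_p}$ solves a linear fourth order double divergence equation whose leading coefficients $\beta^{ij,kl}$ are now H\"older continuous in $x$ (not merely bounded and measurable), derive a uniform $C^{1,\alpha}$ bound on $f$, let $h\to 0$ to conclude $u\in C^{3,\alpha}$, and then iterate to gain one more derivative at each step. The crucial gain over Theorem \ref{main_1} is that once $D^2u\in C^{\alpha}$, the entries $\partial F^{jl}/\partial u_{ik}$, $\partial F^{jl}/\partial u_k$, $\partial F^{jl}/\partial x_p$ evaluated along $(x,Du(x),D^2u(x))$ are H\"older continuous in $x$; hence in (\ref{main3}) the coefficient $\beta^{ij,kl}(x)$ defined by (\ref{bterm}) is H\"older continuous, and the oscillation $|\beta^{ij,kl}(x)-\beta^{ij,kl}(x_0)|$ over a ball $B_r(x_0)$ is bounded by $Cr^{\alpha}$, automatically satisfying the smallness hypothesis (\ref{cons}) for the frozen-coefficient comparison on sufficiently small balls. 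This lets us re-run the Campanato-type iteration of Theorem \ref{main_1} but now for the equation satisfied by $f$, rather than just for $u$ itself.

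First I would fix a point $x_0$ and freeze the leading coefficient at $x_0$, comparing $f$ on $B_r(x_0)$ with the solution $w$ of the constant-coefficient BVP from Lemma \ref{LM} with the same Cauchy data on $\partial B_r(x_0)$. The difference $v=f-w\in H_0^2(B_r(x_0))$ satisfies a double divergence equation whose right-hand side collects: (i) the oscillation term $(\beta^{ij,kl}(x)-\beta^{ij,kl}(x_0))f_{ik}$, contributing $O(r^{\alpha})\|D^2 f\|_{L^2(B_r)}$; (ii) the term $\gamma^{jl}$ from (\ref{f}), which now involves $f_k$ and the $x$-derivative of $F$, both controlled in $C^{\alpha}$; and (iii) the lower order terms $\psi^k\eta_k^{-h_p}+\zeta\eta^{-h_p}$, where $\psi^k=a^k(x,Du,D^2u)$ and $\zeta=b(x,Du,D^2u)$ are now H\"older in $x$, so their difference quotients converge and are bounded. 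Testing with $v$ and using the Legendre ellipticity (Remark \ref{same_lambda} guarantees the linearized equation is still $\Lambda$-uniform) plus Poincar\'e gives $\|D^2 v\|_{L^2(B_r)}^2 \le C r^{2\alpha}\|D^2 f\|_{L^2(B_r)}^2 + C r^{n+2\alpha}$ (after absorbing). Feeding this into Corollary \ref{Cor2} and then Lemma \ref{HanLin} with the decay exponent chosen appropriately yields $\int_{B_\rho(x_0)}|D^2 f-(D^2 f)_\rho|^2 \le C\rho^{n+2\alpha'}$ uniformly in $h$, i.e.\ a uniform $C^{1,\alpha'}$ (Campanato) bound on $f=u^{h_p}$. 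Sending $h\to 0$ promotes this to $D^3 u\in C^{\alpha'}$, so $u\in C^{3,\alpha}$ after possibly shrinking $\alpha$.

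Once $u\in C^{3,\alpha}$, I would repeat essentially the same argument one order higher: the difference quotient $u^{h_p}$ is now known to be $C^{2,\alpha}$-bounded uniformly in $h$, and differentiating (\ref{main3}) (or rather re-deriving the difference quotient identity for the already-$C^{1}$ coefficients) produces an equation for second difference quotients with coefficients whose regularity has improved by one derivative; the same Schauder-type iteration gives a uniform bound on $D(u^{h_p})$ in $C^{2,\alpha}$, hence $u\in C^{4,\alpha}$. Iterating, $u\in C^{k,\alpha}$ for all $k$, so $u\in C^{\infty}(B_1)$. This last inductive step is the bootstrapping procedure of \cite{BW1} applied to difference quotients, and the mechanism is the same at every stage: each newly gained H\"older derivative upgrades the coefficients $\partial F/\partial u_{ik}$, $\partial F/\partial u_k$, $\partial F/\partial x_p$ by one order, which is exactly what is needed to run the comparison with the frozen-coefficient operator and close the loop.

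\textbf{Main obstacle.} The principal technical difficulty is handling the lower order terms and the $\gamma^{jl}$ term in (\ref{f}) with sufficient care: one must verify that, when passing to the limit $h\to 0$, the difference quotients of $a^k(x,Du,D^2u)$, $b(x,Du,D^2u)$, and of the $x$-dependence of $F^{jl}$ actually converge (which requires exactly the H\"older continuity of $D^2u$ established at the start of each stage) and that the resulting estimates are uniform in $h$ so the limit is legitimate. A secondary subtlety is bookkeeping the loss in the H\"older exponent: the product and composition estimates for $\beta^{ij,kl}$, $\gamma^{jl}$ in terms of $D^2u$ may force $\alpha$ to decrease at each bootstrap step, but since we only need $C^{\infty}$ and we gain a full derivative each time, any fixed positive exponent at each stage suffices, and this causes no genuine obstruction.
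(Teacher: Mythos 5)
Your plan matches the paper's proof of Theorem \ref{main_2} in structure: take the difference quotient, exploit that $D^2u\in C^{\alpha}$ makes $\beta^{ij,kl}$, $\gamma^{jl}$, $\psi^k$, $\zeta$ H\"older continuous, freeze the leading coefficient at a point, compare with the constant-coefficient solution from Lemma \ref{LM}, run a Campanato iteration to conclude $u\in C^{3,\alpha'}$, and then bootstrap by integrating by parts (now that $\psi^k,\zeta$ are $C^{1,\alpha}$) and taking further difference quotients. This is exactly the route of Proposition \ref{reg1} followed by the iterative argument in the paper's proof of Theorem \ref{main_2}.

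However, there is a genuine technical gap in the Campanato iteration as you describe it. After you obtain
\[
\left\Vert D^{2}v\right\Vert _{L^{2}(B_{r})}^{2}\leq C\,r^{2\alpha}\left\Vert D^{2}f\right\Vert _{L^{2}(B_{r})}^{2}+C\,r^{n+2\alpha},
\]
substituting into the second inequality of Corollary \ref{Cor2} and invoking Lemma \ref{HanLin} once does not produce the decay $\int_{B_{\rho}}\lvert D^{2}f-(D^{2}f)_{\rho}\rvert^{2}\leq C\rho^{n+2\alpha'}$. If $\left\Vert D^{2}f\right\Vert _{L^{2}(B_{r})}^{2}$ is merely bounded, the error term is of order $r^{2\alpha}$, which is far weaker than $r^{n+2\alpha'}$ for $n\geq2$; Lemma \ref{HanLin} then only gives $\phi(\rho)\lesssim\rho^{2\alpha}$, which does not yield H\"older continuity of $D^{2}f$ via Campanato. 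The paper handles this in two steps: it first applies the first inequality of Corollary \ref{Cor2} with Lemma \ref{HanLin} to establish the Morrey-type decay $\int_{B_{\rho}}\lvert D^{2}f\rvert^{2}\leq C_{9}\rho^{n-\delta}$ for some small $\delta<\alpha$, and only then feeds this improved bound into the Campanato inequality so that the oscillation term becomes $\Upsilon^{2}(r)\left\Vert D^{2}f\right\Vert _{L^{2}(B_{r})}^{2}\lesssim r^{n+2\alpha-\delta}$, at which point a second application of Lemma \ref{HanLin} closes the argument. You should make this intermediate Morrey decay step explicit; without it the claimed Campanato bound does not follow, and the rest of your bootstrap (which is otherwise sound and matches the paper's inductive argument via equation (\ref{ummj})) cannot start.
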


\begin{remark}
The closeness condition (\ref{cons}) is not needed to reach $W^{3,2}$ from
$W^{2,\infty}$. It is used to bootstrap to $C^{2,\alpha}$ from $W^{3,2}$, and
$C^{2,\alpha}$ is enough to bootstrap further.
\end{remark}

\subsection{Proof of Theorem \ref{main_1}}

To boost up regularity, we will work with equation \eqref{main3} on the
difference quotient $u^{h}_{p}$, rather than directly on \eqref{main} for $u$.
Given a solution $f$ to \eqref{main3}, we begin with bounding its $W^{2,2}$
norm in terms of its $W^{1,\infty}$ norm in Proposition \ref{reg3}, then in
Proposition \ref{reg}, we show that the $C^{1,\alpha}$ norm of $f$ depends on
its $W^{2,2}$ norm. This follows essentially the same arguments as in
\cite[Lemma 3.1]{CW} and \cite[Proposition 1.3]{BW1}.

Theorem \ref{main_1} will then follow from Propositions \ref{reg} and
\ref{reg3}, by taking $f=u^{h}_{p}$ therein.

\begin{proposition}
\label{reg3} Suppose that $f\in W^{2,\infty}(B_{1})$ satisfies the uniformly
elliptic weak double divergence equation (\ref{main3}) on $B_{1}$. Then $f$
satisfies the following estimate:
\begin{equation}
||f||_{W^{2,2}(B_{1/2})}\leq C\left(  \Lambda,\left\Vert f\right\Vert
_{W^{1,\infty}(B_{1})},\left\Vert \psi\right\Vert _{L^{2}(B_{1})},\left\Vert
\zeta\right\Vert _{L^{2}(B_{1})},\left\Vert \beta\right\Vert _{L^{\infty
}(B_{1})}\right)  .\text{ }%
\end{equation}

\end{proposition}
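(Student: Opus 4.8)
The plan is to obtain the $W^{2,2}$ bound by the standard device of testing \eqref{main3} against a cutoff multiple of $f$ itself, exploiting the Legendre ellipticity of the leading coefficient $\beta^{ij,kl}$ to control $\int |D^2 f|^2$ by lower-order terms. First I would fix a cutoff function $\zeta_0 \in C_c^\infty(B_1)$ with $\zeta_0 \equiv 1$ on $B_{1/2}$, $0 \le \zeta_0 \le 1$, and $|D\zeta_0| + |D^2\zeta_0| \le C$, and take the test function $\eta = \zeta_0^4 f$ (or a similar power sufficient to absorb all the derivative hits on the cutoff; since two derivatives land on $\eta$ and we want a clean square at the end, a fourth power is the safe choice). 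Here we must be careful: \eqref{main3} is valid only for $\eta \in C_c^\infty(B_{1-h})$, and $f = u^{h_p}$ is only $W^{2,\infty}$, so I would first note that by density \eqref{main3} extends to all $\eta \in W^{2,2}_0(B_1)$ (the coefficients $\beta,\gamma,\psi,\zeta$ are in $L^\infty$, resp. $L^2$, so each integrand is integrable and the pairing is continuous in the $W^{2,2}$ topology on $\eta$), and $\zeta_0^4 f \in W^{2,2}_0(B_1)$ because $f \in W^{2,\infty}$.

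Next I would expand $\eta_{jl} = (\zeta_0^4 f)_{jl} = \zeta_0^4 f_{jl} + (\text{terms with at least one derivative on }\zeta_0)$. Substituting into \eqref{main3}, the principal term is
\[
\int_{B_1} \beta^{ij,kl} f_{ik} f_{jl}\, \zeta_0^4\, dx \ge \Lambda \int_{B_1} \zeta_0^4 |D^2 f|^2\, dx
\]
by \eqref{Bcondition_1} applied pointwise with $\sigma = D^2 f(x)$ (using $\Lambda$-uniformity of \eqref{main3}, per Remark \ref{same_lambda}). Every remaining term is lower order: the commutator terms from $(\zeta_0^4 f)_{jl}$ involve at most one factor of $D^2 f$ multiplied by $D\zeta_0$ and $Df$ or $f$, hence are estimated by $\epsilon \int \zeta_0^4 |D^2 f|^2 + C_\epsilon(\Lambda, \|f\|_{W^{1,\infty}})$ via Cauchy--Schwarz with a small parameter $\epsilon$; the $\gamma^{jl}$ term pairs an $L^\infty$-type quantity (recall $\gamma^{jl}$ from \eqref{f} is bounded in terms of $\|DF\|_{L^\infty(U)}$ and $\|f\|_{W^{1,\infty}}$) against $(\zeta_0^4 f)_{jl}$, again split by Cauchy--Schwarz; and the $\psi^k \eta_k^{-h_p} + \zeta \eta^{-h_p}$ terms are handled by the standard difference-quotient bound $\|\eta^{-h_p}\|_{L^2} \le C\|D\eta\|_{L^2} \le C(\|f\|_{W^{1,\infty}} + \epsilon^{1/2}\|\zeta_0^2 D^2 f\|_{L^2})$ together with $\|\psi\|_{L^2}, \|\zeta\|_{L^2}$. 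Choosing $\epsilon$ small and absorbing yields $\int_{B_1}\zeta_0^4 |D^2 f|^2 \le C(\Lambda, \|f\|_{W^{1,\infty}}, \|\psi\|_{L^2}, \|\zeta\|_{L^2}, \|\beta\|_{L^\infty})$, and since $\zeta_0 \equiv 1$ on $B_{1/2}$ this gives the claimed bound on $\|D^2 f\|_{L^2(B_{1/2})}$; the $\|f\|_{L^2} + \|Df\|_{L^2}$ part of the $W^{2,2}$ norm is immediate from $f \in W^{1,\infty}$.

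The main obstacle, and the only genuinely delicate point, is the justification that \eqref{main3} may legitimately be tested against $\eta = \zeta_0^4 f$: one needs the density argument extending \eqref{main3} from $C_c^\infty(B_{1-h})$ to $W^{2,2}_0(B_1)$, which requires checking that $\gamma^{jl} \in L^2(B_1)$ (it is, being a bounded integral of $\partial F^{jl}/\partial u_k$ and $\partial F^{jl}/\partial x_p$ against $f_k$ with $f \in W^{1,\infty}$ — indeed it is in $L^\infty$), and that $\psi^k, \zeta \in L^2$ (assumed). A secondary bookkeeping point is tracking exactly which power of the cutoff is needed so that after expanding $(\zeta_0^m f)_{jl}$ every term carrying fewer than two derivatives on $\zeta_0$ still has enough powers of $\zeta_0$ left to be absorbed into $\int \zeta_0^4|D^2 f|^2$ by Young's inequality; taking $m=2$ and estimating more carefully, or $m$ slightly larger, both work, and this is routine once set up. Everything else is a direct Cauchy--Schwarz-and-absorb computation with no conceptual difficulty.
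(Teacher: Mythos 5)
Your proposal is correct and follows essentially the same line as the paper: test \eqref{main3} against $\tau^4 f$ for a cutoff $\tau$, apply the Legendre ellipticity to the principal term, and absorb all lower-order contributions (including the difference-quotient terms in $\psi,\zeta$, handled via the integral representation and Cauchy--Schwarz) with Young's inequality. The only cosmetic caveats are that the cutoff should be supported in $B_{3/4}$ (as in the paper) rather than merely in $B_1$, so that $\tau^4 f$ is a legitimate test function for the difference-quotient equation on $B_{1-h}$, and that naming the cutoff $\zeta_0$ collides with the coefficient $\zeta$ already in use.
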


\begin{proof}
Assuming $f\in W^{2,\infty}(B_{1})$, $f$ will be $W^{2,2}$ and the function
$\tau^{4}f$ can be approximated by functions $\eta\in C_{c}^{\infty}(B_{3/4})$
in $W^{2,2}$ norm for $\tau$ smooth compactly supported on $B_{3/4}$ which is
$1$ on $B_{1/2}.$ Thus
\[
\int_{B_{1}}\bigg[
\begin{array}
[c]{c}%
\beta^{ij,kl}f_{ik}\left(  \tau^{4}f\right)  _{jl}+\gamma^{jl}\left(  \tau
^{4}f\right)  _{jl} +\psi^{k} \left(  \tau^{4}f\right)  _{k}^{-h_{p}}%
+\zeta\left(  \tau^{4}f\right)  ^{-h_{p}}%
\end{array}
\bigg]dx=0.
\]
Applying uniform ellipticity to the first term of the above expression, we
get
\begin{align}
\Lambda\int_{B_{1}}\tau^{4}\left\vert D^{2}f\right\vert ^{2}dx  &  \leq
\int_{B_{1}}\left\vert \beta^{ij,kl}f_{ik}\left(  \left(  \tau^{4}\right)
_{jl}f+\left(  \tau^{4}\right)  _{l}f_{j}+\left(  \tau^{4}\right)  _{j}%
f_{l}\right)  \right\vert \, dx\label{term3b}\\
&  +\int_{B_{1}}\left(  \left\vert \gamma^{jl}\left(  \tau^{4}f\right)
_{jl}\right\vert +\left\vert \psi^{k} \left(  \tau^{4}f\right)  _{k}^{-h_{p}%
}\right\vert +\left\vert \zeta\left(  \tau^{4}f\right)  ^{-h_{p}}\right\vert
\right)  dx.\nonumber
\end{align}
Straightforward use of inequalities gives%
\begin{align*}
\int_{B_{1}}  &  \left\vert \beta^{ij,kl}f_{ik}\left(  \left(  \tau
^{4}\right)  _{jl}f+\left(  \tau^{4}\right)  _{l}f_{j}+\left(  \tau
^{4}\right)  _{j}f_{l}\right)  \right\vert dx\\
&  \leq C\left(  D\tau,D^{2}\tau,\left\Vert f\right\Vert _{W^{1,\infty}%
},\left\Vert \beta\right\Vert _{L^{\infty}}\right)  \int_{B_{1}}\tau
^{2}\left\vert D^{2}f\right\vert dx\\
&  \leq C\left(  D\tau,D^{2}\tau,\left\Vert f\right\Vert _{W^{1,\infty}%
},\left\Vert \beta\right\Vert _{L^{\infty}}\right)  \left(  \frac
{1}{\varepsilon}+\varepsilon\int_{B_{1}}\tau^{4}\left\vert D^{2}f\right\vert
^{2}dx\right)  .
\end{align*}
Similarly%
\begin{equation}
\int_{B_{1}}\left\vert \gamma^{jl}\left(  \tau^{4}f\right)  _{jl}\right\vert
dx\leq C\left(  D\tau,D^{2}\tau,\left\Vert f\right\Vert _{W^{1,\infty}%
},\left\Vert \beta\right\Vert _{L^{\infty}}\right)  \left(  \frac
{1}{\varepsilon}+\varepsilon\int_{B_{1}}\tau^{4}\left\vert D^{2}f\right\vert
^{2}dx\right)  . \label{term2b}%
\end{equation}
Now for
\begin{equation}
\int_{B_{1}}\left\vert \psi^{k} \left(  \tau^{4}f\right)  _{k}^{-h_{p}%
}\right\vert dx \label{term222}%
\end{equation}
observe that
\begin{align*}
\int_{B_{1}}\left|  \psi^{k}\frac{\left(  \tau^{4}f\right)  _{k}%
(x-h_{p})-\left(  \tau^{4}f\right)  _{k}}{h}\right|  dx  &  =\int_{B_{1}%
}\left|  \psi^{k}\right|  \left|  \int_{0}^{1}D\left(  \tau^{4}f\right)
_{k}(x-th_{p})dt\right|  dx\\
&  \leq\int_{0}^{1}\int_{B_{1}}\left|  \psi^{k}\right|  \left|  D \left(
\tau^{4}f\right)  _{k} (x-th_{p})\right|  dxdt\\
&  \leq\int_{0}^{1}\left\Vert \psi\right\Vert _{L^{2}(B_{1})}\left\Vert
D^{2}\left(  \tau^{4}f\right)  \right\Vert _{L^{2}(B_{1})}dt\\
&  =\left\Vert \psi\right\Vert _{L^{2}(B_{1})}\left\Vert D^{2}\left(  \tau
^{4}f\right)  \right\Vert _{L^{2}(B_{1})}%
\end{align*}
which can be treated as in (\ref{term2b})%
\[
\int_{B_{1}}\left\vert \psi^{k} \left(  \tau^{4}f\right)  _{k}^{-h_{p}%
}\right\vert dx\leq C\left(  D^{2}\tau,\left\Vert f\right\Vert _{W^{1,\infty}%
},\left\Vert \psi\right\Vert _{L^{2}(B_{1})}\right)  \left(  \frac
{1}{\varepsilon}+\varepsilon\int_{B_{1}}\tau^{4}\left\vert D^{2}f\right\vert
^{2}dx\right)  .
\]
Finally, treating the last term in (\ref{term3b}) similarly as for
(\ref{term222}), we can bound (\ref{term3b}) in lower order terms of $f.$

Combining and using the appropriately chosen $\tau$, we choose $\varepsilon$
appropriately in the above equation and in (\ref{term2b}), to get
\[
\frac{\Lambda}{2}\int_{B_{1/2}}\left\vert D^{2}f\right\vert ^{2}dx\leq
C\left(  \left\Vert f\right\Vert _{W^{1,\infty}(B_{1})},\left\Vert
\psi\right\Vert _{L^{2}(B_{1})},\left\Vert \zeta\right\Vert _{L^{2}(B_{1}%
)},\left\Vert \beta\right\Vert _{L^{\infty}(B_{1})}\right)  ,
\]
therefore complete the proof.
\end{proof}

Our next result is key in achieving $C^{2,\alpha}$ regularity of $u$.

\begin{proposition}
\label{reg} For a fixed $h_{p}$ with $\left\vert h\right\vert <\frac{1}{100}$
suppose that $f\in W^{2,2}(B_{1})$ satisfies the uniformly elliptic double
divergence equation \eqref{main3} weakly on $B_{3/4}(0)$. Suppose that
$\gamma^{jl},\psi^{k}, \zeta\in L^{2q}$ with $q=\frac{n}{2-2\alpha}, \alpha
\in(0,1)$. Then, there is an $\varepsilon_{0}(n, \Lambda, \alpha)>0$, such
that if \eqref{cons} holds as in Theorem \ref{main_1} then we have $Df\in
C^{\alpha}(B_{1/4})$ and the estimates:
\begin{equation}
||Df||_{C^{\alpha}(B_{1/4})}\leq C(\Lambda,\alpha, ||f||_{W^{2,2}(B_{1/2}%
)},\left\Vert \gamma^{jl}\right\Vert _{L^{2q}(B_{1}),}\left\Vert \psi
^{k}\right\Vert _{L^{2q}(B_{1}) },\left\Vert \zeta\right\Vert _{L^{2q}(B_{1}%
)}). \label{est1}%
\end{equation}

\end{proposition}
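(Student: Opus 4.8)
The plan is to run a Campanato-type iteration on the difference-quotient equation (\ref{main3}), comparing $f$ on each small ball $B_r(y)$ with the solution $w$ of the constant-coefficient BVP from Lemma \ref{LM} having the same Cauchy data on $\partial B_r(y)$. First I would localize: for $y\in B_{1/4}$ and $r$ small, let $w\in W^{2,2}(B_r(y))$ solve $\int c_0^{ij,kl}w_{ik}\eta_{jl}=0$ with $w-f\in H^2_0(B_r(y))$, where $c_0^{ij,kl}=a_0^{ij,kl}$ is the frozen coefficient from (\ref{cons}). Subtracting the two weak formulations, the error $v=f-w\in H^2_0(B_r(y))$ satisfies
\[
\int_{B_r(y)} c_0^{ij,kl}v_{ik}\eta_{jl}\,dx = \int_{B_r(y)}\Big[(c_0^{ij,kl}-\beta^{ij,kl})f_{ik}\eta_{jl} - \gamma^{jl}\eta_{jl} - \psi^k\eta_k^{-h_p} - \zeta\eta^{-h_p}\Big]dx
\]
for all $\eta\in C_c^\infty(B_r(y))$; testing with $\eta=v$ (via approximation) and using the Legendre condition (\ref{Bcondition_1}) on $c_0$ gives
\[
\Lambda\int_{B_r(y)}|D^2v|^2 \le \varepsilon_0\int_{B_r(y)}|D^2f||D^2v| + \int_{B_r(y)}\big(|\gamma||D^2v| + |\psi||D v^{-h_p}| + |\zeta||v^{-h_p}|\big).
\]
Absorbing $\varepsilon_0$-terms into the left side and using Poincaré (recall $v\in H^2_0$) together with Hölder on the $\gamma,\psi,\zeta$ terms — here I would use $L^{2q}$ with $q=\frac{n}{2-2\alpha}$ paired against $L^{2q'}$ where $\frac{1}{q'}=1-\frac{1}{q}$ and interpolate with Sobolev embedding of $H^2_0(B_r)\hookrightarrow L^{2q'}$ scaling like $r^{?}$ — yields
\[
\int_{B_r(y)}|D^2v|^2 \le C\varepsilon_0^2\int_{B_r(y)}|D^2f|^2 + Cr^{n-2+2\alpha}\big(\|\gamma\|_{L^{2q}}^2+\|\psi\|_{L^{2q}}^2+\|\zeta\|_{L^{2q}}^2\big).
\]
The difference-quotient terms $v^{-h_p}$ are handled exactly as in Proposition \ref{reg3}: $\|v^{-h_p}\|_{L^2}\le \|Dv\|_{L^2}$ and $\|Dv^{-h_p}\|_{L^2}\le\|D^2v\|_{L^2}$, so no loss occurs from the nonlocality.

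Next I would feed this into Corollary \ref{Cor2}, which transfers the decay estimates of Theorem \ref{five} from $w$ to $f$: for $0<\rho\le r$,
\[
\int_{B_\rho(y)}|D^2f-(D^2f)_\rho|^2 \le C_2\Big(\frac{\rho}{r}\Big)^{n+2}\int_{B_r(y)}|D^2f-(D^2f)_r|^2 + C\int_{B_r(y)}|D^2v|^2,
\]
and combining with the error bound above gives
\[
\phi(\rho) \le C\Big[\big(\tfrac{\rho}{r}\big)^{n+2} + \varepsilon_0^2\big(\tfrac{r}{\rho}\big)^{?}\Big]\phi(r) + Cr^{n-2+2\alpha},
\]
where $\phi(r)=\int_{B_r(y)}|D^2f-(D^2f)_r|^2$. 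The bookkeeping needs care: the $\varepsilon_0^2\int|D^2f|^2$ term must be re-expressed in terms of $\phi$ plus the already-controlled first Campanato functional $\int_{B_r}|D^2f|^2$ (which decays like $r^n$ by the first estimate of Theorem \ref{five} plus Corollary \ref{Cor2}, after absorbing small constants — this is the standard Morrey-space step). Once everything is in the form of Lemma \ref{HanLin} with $\alpha_{\text{HanLin}}=n+2$, $\beta=n-2+2\alpha<n+2$, and $\varepsilon=\varepsilon_0^2$ small, I conclude $\phi(\rho)\le C\rho^{n-2+2\alpha}$ for all small $\rho$, uniformly in $y\in B_{1/4}$. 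By Campanato's characterization, $D^2f\in C^{\alpha-1}$... more precisely this is the Campanato space $\mathcal{L}^{2,n-2+2\alpha}$, which embeds into $C^{0,\alpha}$ for $Df$; integrating the decay gives $Df\in C^\alpha(B_{1/4})$ with the stated dependence. Tracking constants through Lemma \ref{HanLin} and the embedding produces the bound (\ref{est1}) in terms of $\|f\|_{W^{2,2}(B_{1/2})}$ and the $L^{2q}$ norms of $\gamma,\psi,\zeta$.

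The main obstacle I anticipate is the correct scaling accounting in the perturbation estimate — specifically, verifying that the Hölder/Sobolev pairing for the $\gamma^{jl}\eta_{jl}$, $\psi^k\eta_k^{-h_p}$ and $\zeta\eta^{-h_p}$ terms produces exactly the exponent $n-2+2\alpha$ needed for Lemma \ref{HanLin} to apply with $\beta<\alpha_{\text{HanLin}}$, and that the choice $q=\frac{n}{2-2\alpha}$ is forced by this requirement. One must also ensure that the $\varepsilon_0$-smallness absorbs not just the $(c_0-\beta)f_{ik}\eta_{jl}$ term but survives the iteration after Corollary \ref{Cor2} introduces the factor $(r/\rho)^{\text{something}}$; choosing $\varepsilon_0=\varepsilon_0(n,\Lambda,\alpha)$ small enough relative to the $\varepsilon^*$ of Lemma \ref{HanLin} closes this. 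A secondary technical point is legitimizing $\eta=v$ as a test function: since $v\in H^2_0(B_r(y))$ one approximates by $C_c^\infty$ functions in $W^{2,2}$, exactly as done in the proof of Lemma \ref{LM}, and the difference-quotient terms pass to the limit because $h$ is fixed and the relevant operators are bounded on $W^{2,2}$. Everything else is routine once these scaling relations are pinned down.
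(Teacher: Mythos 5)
Your proposal and the paper share the same backbone: compare $f$ on each $B_r(y)$ with the constant--coefficient solution $w$ supplied by Lemma~\ref{LM}, exploit the Legendre condition on $a_0^{ij,kl}$, estimate the error term $(a_0-\beta)f_{ik}v_{jl}$ by the closeness hypothesis~(\ref{cons}), use H\"older with $q=\tfrac{n}{2-2\alpha}$ to extract $r^{\frac{n-2+2\alpha}{2}}$ from the $\gamma,\psi,\zeta$ terms, and then iterate via Lemma~\ref{HanLin}. That part is sound and matches the paper.

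Where you diverge is in running a \emph{Campanato} (mean--oscillation) iteration with the second inequality of Corollary~\ref{Cor2} and $\alpha_{\mathrm{HL}}=n+2$. This is both redundant and, as you partially notice yourself, circular: the perturbation term it produces is $\varepsilon_0^2\int_{B_r}|D^2f|^2$, which cannot be absorbed into the mean--oscillation functional $\phi(\rho)=\int_{B_\rho}|D^2f-(D^2f)_\rho|^2$ without first establishing a separate decay for $\int_{B_r}|D^2f|^2$. That separate Morrey iteration --- applying the \emph{first} inequality of Corollary~\ref{Cor2} with $\alpha_{\mathrm{HL}}=n$, $\beta=n-2+2\alpha$ --- is exactly what the paper does, and it already delivers $\int_{B_r}|D^2f|^2\le C r^{n-2+2\alpha}$. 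At that point Morrey's Lemma applied to $Df$ gives $Df\in C^\alpha(B_{1/4})$ directly, and the proof is finished. The Campanato layer you add afterwards recovers only $\int_{B_r}|D^2f-(D^2f)_r|^2 \le Cr^{n-2+2\alpha}$, which (since $n-2+2\alpha<n$) is merely the equivalent Morrey bound again --- it gives no H\"older control on $D^2f$ and nothing beyond what the Morrey step already produced. That stronger oscillation estimate is only needed, and only achievable, in Proposition~\ref{reg1} once $D^2u$ is already $C^\alpha$ and the coefficient oscillation $\Upsilon(r)\lesssim r^\alpha$ can replace $\varepsilon_0$.

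Two smaller inaccuracies: (i) you assert $\int_{B_r}|D^2f|^2$ ``decays like $r^n$'' from the Morrey step --- because of the $K r^{n-2+2\alpha}$ forcing it actually decays like $r^{n-2+2\alpha}$, not $r^n$; the HanLin mechanism caps the rate at $\beta$, not at $\alpha_{\mathrm{HL}}$. (ii) Describing the conclusion as a Campanato embedding ``$\mathcal{L}^{2,n-2+2\alpha}\hookrightarrow C^{0,\alpha}$ for $Df$'' obscures that, with exponent below $n$, this is just Morrey's Dirichlet--growth lemma applied to $Df$, which is how the paper cites it (\cite[Lemma 3]{SimonETH}). Neither point breaks the argument, but streamlining to the single Morrey iteration removes both.
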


\begin{proof}
Pick an arbitrary point $y\in$ $B_{1/4}$. Then $B_{r}(y)\subset B_{3/4}\text{
}$ for any fixed $r<1/2.$

We write $v=f-w,$ where $w$ satisfies the following constant coefficient
partial differential equation on ${B}_{r}(y)\subset B_{3/4}$:
\begin{align*}
\int_{B_{r}(y)}  &  a_{0}^{ij,kl}w_{ik}\eta_{jl}dx=0,\ \ \ \text{ $\forall$%
}\eta\in C_{0}^{\infty}(B_{r}(y))\\
w  &  =f,\ \ D w= D f\ \ \ \ \ \text{on }\partial B_{r}(y).
\end{align*}
Here $a_{0}^{ij,kl}$ is the symbol occurring in our assumption \eqref{cons}.
This solution exists by Lemma \ref{LM} and is smooth on the interior of
$B_{r}(y)$ \cite[Theorem 6.33]{Folland}.

We may extend $v$ to a function (still named $v$) on $B_{3/4}$ by defining
$v=0$ on $B_{3/4}\backslash B_{r}(y).$ \ As the original $v\in H_{0}^{2}%
(B_{r}(y))$ is the limit of $C_{c}^{\infty}(B_{r}(y))$ functions $\eta^{(m)}$
it follows that the extended $v$ must also remain in $H_{0}^{2}(B_{3/4}).$

Now because $v$ is the $W^{2,2}(B_{r}(y))$ limit of functions $\eta^{(m)}$
$\in C_{c}^{\infty}(B_{r}(y))\subset C_{c}^{\infty}(B_{3/4})$ we may also
write
\begin{align}
\int_{B_{r}(y)}a_{0}^{ij,kl}v_{ik}v_{jl}dx  &  =\lim_{m\rightarrow\infty}%
\int_{B_{r}(y)}a_{0}^{ij,kl}v_{ik}(\eta^{(m)})_{jl}dx\nonumber\\
&  =\lim_{m\rightarrow\infty}\int_{B_{r}(y)}a_{0}^{ij,kl}f_{ik}(\eta
^{(m)})_{jl}dx\nonumber\\
&  =\lim_{m\rightarrow\infty}\int_{B_{3/4}}a_{0}^{ij,kl}f_{ik}(\eta
^{(m)})_{jl}dx\nonumber\\
&  =\int_{B_{3/4}}a_{0}^{ij,kl}f_{ik}v_{jl}dx. \label{eq34}%
\end{align}
Now taking limits of (\ref{main3}) for $\eta^{(m)}\rightarrow v$ we conclude
that
\begin{equation}
\int_{B_{3/4}}\left(  \beta^{ij,kl}f_{ik}v_{jl}+\gamma^{jl}v_{jl}+\psi
^{k}v_{k}^{-h_{p}}+\zeta v^{-h_{p}}\right)  dx=0. \label{eq35}%
\end{equation}
Now we subtract (\ref{eq35}) from (\ref{eq34})
\begin{align}
\int_{B_{r}(y)}a_{0}^{ij,kl}v_{ik}v_{jl}dx  &  =\int_{B_{3/4}}a_{0}%
^{ij,kl}f_{ik}v_{jl}dx-\int_{B_{3/4}}\left(  \beta^{ij,kl}f_{ik}v_{jl}%
+\gamma^{jl}v_{jl}+\psi^{k}v_{k}^{-h_{p}}+\zeta v^{-h_{p}}\right)
dx\label{eq36}\\
&  =\int_{B_{3/4}}\left(  a_{0}^{ij,kl}-\beta^{ij,kl}\right)  f_{ik}%
v_{jl}dx-\int_{B_{3/4}}\gamma^{jl}v_{jl}dx-\int_{B_{3/4}}\left(  \psi^{k}%
v_{k}^{-h_{p}}+\zeta v^{-h_{p}}\right)  dx.\nonumber
\end{align}
First we note that our condition (\ref{cons}), for an $\varepsilon_{0}$ yet to
determined, gives us%
\begin{equation}
\int_{B_{3/4}}\left\vert (a_{0}^{ij,kl}-\beta^{ij,kl})f_{ik}v_{jl}\right\vert
dx\leq\varepsilon_{0}\left\Vert D^{2}f\right\Vert _{L^{2}(B_{r}(y))}\left\Vert
D^{2}v\right\Vert _{L^{2}(B_{r}(y))}, \label{eq37}%
\end{equation}
making use of the fact that $v$ is supported in $B_{r}(y).$ Next, by
H\"{o}lder's inequality
\begin{equation}
\int_{B_{3/4}}|\gamma^{jl}v_{jl}|dx\leq C(n)\left\Vert \gamma\right\Vert
_{L^{2}(B_{r}(y))}\left\Vert D^{2}v\right\Vert _{L^{2}(B_{r}(y))}\leq
C(n)\left\Vert \gamma\right\Vert _{L^{2q}(B_{r}(y))}r^{\frac{n-2+2\alpha}{2}%
}\left\Vert D^{2}v\right\Vert _{L^{2}(B_{r}(y))} \label{eq38}%
\end{equation}
where $q=\frac{n}{2(1-\alpha)}$.

For the third term
\begin{align}
\int_{B_{3/4}}  &  \left\vert \psi^{k}\frac{v_{k}(x-h_{p})-v_{k}(x)}%
{h}\right\vert dx=\lim_{m\rightarrow\infty}\int_{B_{3/4}}\left\vert \psi
^{k}\frac{\left(  \eta^{(m)}\right)  _{k}(x-h_{p})-\left(  \eta^{(m)}\right)
_{k}(x)}{h}\right\vert dx\nonumber\\
&  =\lim_{m\rightarrow\infty}\int_{B_{3/4}}\left\vert \psi^{k}\int_{0}%
^{1}\left(  -D_{pk}\eta^{(m)}(x-th_{p})\right)  dt\right\vert dx\nonumber\\
&  \leq\lim_{m\rightarrow\infty}\int_{B_{3/4}}\left\vert \psi^{k}\right\vert
\int_{0}^{1}\left\vert D_{pk}\eta^{(m)}(x-th_{p})dt\right\vert dx\nonumber\\
&  \leq\lim_{m\rightarrow\infty}\int_{0}^{1}\int_{B_{3/4}}\left\vert \psi
^{k}\right\vert \left\vert D_{pk}\eta^{(m)}(x-th_{p})\right\vert
dxdt\ \ \ \ \ (\mbox{Tonelli's Theorem})\nonumber\\
&  \leq\int_{0}^{1}\int_{B_{3/4}}\left\vert \psi^{k}\right\vert \left\vert
D^{2}v(x-th_{p})\right\vert dxdt\ \ \ \ \ (\mbox{Fatou's Lemma})\nonumber\\
&  =\int_{0}^{1}\int_{B_{r+h}(y)}\left\vert \psi^{k}\right\vert \left\vert
D^{2}v(x-th_{p})\right\vert
dxdt\ \ \ \ \ (\mbox{supp $v\subset B_r(y)$})\nonumber\\
&  \leq\left\Vert \psi\right\Vert _{L^{2}(B_{r+h}(y))}\left\Vert
D^{2}v\right\Vert _{L^{2}(B_{r}(y))}%
\ \ \ \ \ \ (\mbox{Cauchy-Schwarz inequality})\nonumber\\
&  \leq C(n)\left\Vert \psi\right\Vert _{L^{2q}(B_{r+h}(y))}r^{\frac
{n-2+2\alpha}{2}}\left\Vert D^{2}v\right\Vert _{L^{2}(B_{r}(y))}%
.\ \ \ (\mbox{H\"older's inequality}) \label{eq39}%
\end{align}

A\ similar computation yields
\begin{align}
\int_{B_{3/4}}\left\vert \zeta(x)v^{-h_{p}}(x)dx\right\vert  &  \leq\Vert
\zeta\Vert_{L^{2}(B_{r+h}(y))}\cdot\Vert Dv\Vert_{L^{2}(B_{r}(y))}\nonumber\\
&  \leq C(n)\Vert\zeta\Vert_{L^{2q}(B_{r+h}(y))}r^{\frac{n-2+2\alpha}{2}}\cdot
C_{p}|B_{r}(y)|^{\frac{1}{n}}\left\Vert D^{2}v\right\Vert _{L^{2}(B_{r}(y))}
\label{e310}%
\end{align}
where $C_{p}$ is from the Poincar\'{e} inequality \cite[(7.44)]{GT}.

\medskip

Now since $a_{0}^{ij,kl}$ has an ellipticity constant $\Lambda$, plugging the
bounds (\ref{eq37}), (\ref{eq38}), (\ref{eq39}), (\ref{e310}) into
(\ref{eq36}), we have (collecting dimensional constants into a new $C(n)$)
\begin{align*}
\Lambda\left\Vert D^{2}v\right\Vert _{L^{2}(B_{r}(y))}^{2}  &  \leq
\varepsilon_{0}\left\Vert D^{2}f\right\Vert _{L^{2}(B_{r}(y))}\left\Vert
D^{2}v\right\Vert _{L^{2}(B_{r}(y))}+C(n)\left\Vert \gamma\right\Vert
_{L^{2q}}r^{\frac{n-2+2\alpha}{2}}\left\Vert D^{2}v\right\Vert _{L^{2}%
(B_{r}(y))}\\
&  +C(n)\left\Vert \psi\right\Vert _{L^{2q}}r^{\frac{n-2+2\alpha}{2}%
}\left\Vert D^{2}v\right\Vert _{L^{2}(B_{r}(y))}+C(n)\left\Vert \zeta
\right\Vert _{L^{2q}}r^{\frac{n-2+2\alpha}{2}}\left\Vert D^{2}v\right\Vert
_{L^{2}(B_{r}(y))}.
\end{align*}
Dividing by $\left\Vert D^{2}v\right\Vert _{L^{2}(B_{r}(y))}$ and collecting%
\[
\Lambda\left\Vert D^{2}v\right\Vert _{L^{2}(B_{r}(y))}\leq\varepsilon
_{0}\left\Vert D^{2}f\right\Vert _{L^{2}(B_{r}(y))}+C(n)\left(  \left\Vert
\gamma\right\Vert _{L^{2q}}+\left\Vert \psi\right\Vert _{L^{2q}}+\left\Vert
\zeta\right\Vert _{L^{2q}}\right)  r^{\frac{n-2+2\alpha}{2}}.
\]
That is%
\[
\Lambda^{2}\left\Vert D^{2}v\right\Vert _{L^{2}(B_{r}(y))}^{2}\leq
2\varepsilon_{0}^{2}\left\Vert D^{2}f\right\Vert _{L^{2}(B_{r}(y))}%
^{2}+Kr^{n-2+2\alpha}%
\]
for (again modifying $C(n)$)
\[
K=C(n)\left(  \left\Vert \gamma\right\Vert _{L^{2q}}^{2}+\left\Vert
\psi\right\Vert _{L^{2q}}^{2}+\left\Vert \zeta\right\Vert _{L^{2q}}%
^{2}\right)  .
\]

Recalling $f=v+w$ and Corollary \ref{Cor2}
\[
\int_{B_{\rho}(y)}\left\vert D^{2}f\right\vert ^{2}\leq4C_{1}\left(
\frac{\rho}{r}\right)  ^{n}\left\Vert D^{2}f\right\Vert _{L^{2}(B_{r}(y))}%
^{2}+\left(  2+8C_{1}\right)  \left\Vert D^{2}v\right\Vert _{L^{2}(B_{r}%
(y))}^{2}%
\]
for $C_{1}$ depending on the ellipticity of $a_{0}^{ij,kl}$ we see
\begin{equation}
\int_{B_{\rho}(y)}\left\vert D^{2}f\right\vert ^{2}\leq4C_{1}\left(
\frac{\rho}{r}\right)  ^{n}\left\Vert D^{2}f\right\Vert _{L^{2}(B_{r}(y))}%
^{2}+\frac{2\left(  2+8C_{1}\right)  }{\Lambda^{2}}\left(  \varepsilon_{0}^{2}
\left\Vert D^{2}f\right\Vert _{L^{2}(B_{r}(y))}^{2}+Kr^{n-2+2\alpha}\right)  .
\label{2hl}%
\end{equation}
Now, we would like to apply Lemma \ref{HanLin}. To this end, let
\begin{align*}
\phi(\rho)  &  =\int_{B_{\rho}}\left\vert D^{2}f\right\vert ^{2}\\
A  &  =4C_{1}\\
\varepsilon &  =\frac{2\left(  2+8C_{1}\right)  }{\Lambda^{2}}\varepsilon
_{0}^{2}\\
B  &  =\frac{2\left(  2+8C_{1}\right)  }{\Lambda^{2}}K\\
\alpha &  =n\\
\beta &  =n-2+2\alpha\\
\gamma &  =n-1\\
R  &  =\frac{1}{2}.
\end{align*}
To be clear, in order to avoid notational double-dipping, the notations
appearing on the left hand side of expressions above refer to constants as
they are named in Lemma \ref{HanLin}, while the right hand side refers to
constants as they appear previously in this proof so far. We observe that
(\ref{2hl}) can be written using notation on the left side of the above table
as
\begin{equation}
\phi(\rho)\leq A\left[  \left(  \frac{\rho}{r}\right)  ^{\alpha}%
+\varepsilon\right]  \phi(r)+Br^{\beta} \label{this}%
\end{equation}
for all $0<\rho\leq r<\frac{1}{2}.$ There exists a constant $\varepsilon
^{\ast}\left(  A,\alpha,\beta,\gamma\right)  $ so that (\ref{this}) allows us
to conclude that there is a constant $C>0$ such that
\[
\phi(\rho)\leq C\left[  \left(  \frac{\rho}{r}\right)  ^{n-1}\phi
(r)+Br^{n-2+2\alpha}\right]
\]
whenever
\begin{equation}
\frac{2\left(  2+8C_{1}\right)  }{\Lambda^{2}}\varepsilon_{0}^{2}%
\leq\varepsilon^{\ast}\left(  A,\alpha,\beta,\gamma\right)  .
\label{eps_0_def}%
\end{equation}
We pick one such $\varepsilon_{0}$. Thus
\begin{align*}
\phi(r)  &  \leq C\left[  2^{n-1}r^{n-1}\phi(\frac{1}{2})+Br^{n-2+2\alpha
}\right] \\
&  \leq C^{\prime}r^{n-2+2\alpha}%
\end{align*}
where $C^{\prime}$ depends on $\ \int_{B_{1/2}}\left\vert D^{2}f\right\vert
^{2},\Lambda,n,\alpha,$ and $\frac{2\left(  2+8C_{1}\right)  }{\Lambda^{2}}K$.

We now have that
\[
\int_{B_{r}}\left\vert D^{2}f\right\vert ^{2}\leq C^{\prime}r^{n-2+2\alpha}.
\]
Noting that we chose an arbitrary point in $B_{1/4}(0)$ we may apply Morrey's
Lemma \cite[Lemma 3, page 8]{SimonETH} to $Df$ to get the desired conclusion.
\end{proof}

\noindent\textit{{Proof of Theorem \ref{main_1}.}} Applying Proposition
\ref{reg3} we see that $u\in W^{3,2}$, with estimates controlled by
$\left\Vert u\right\Vert _{W^{2,\infty}}.$ The difference quotient
$f=u^{h}_{p}$ satisfies \eqref{main3} where now $f\in W^{2,2}$ with estimates.
Using the supremum norms of $DF, a^{k}, b$ and that $u\in W^{2,\infty}$, the
conditions on $\gamma^{jl},\psi^{k},\zeta$ in Proposition \ref{reg} are
fulfilled, namely, they are in $L^{2q}$. In light of Proposition \ref{reg} we
conclude $u_{p}^{h}\in C^{1,\alpha}$ with the estimate \eqref{est1} where we
note that now
\begin{align*}
\|f\|_{W^{1,\infty}}  &  =\left\|  \frac{u(x)-u(x-h_{p})}{h}\right\|
_{W^{1,\infty}}\\
&  =\mbox{ess sup} \left(  \left|  \frac{u(x)-u(x-h_{p})}{h}\right|  +\left|
\frac{Du(x)-Du(x-h_{p})}{h}\right|  \right) \\
&  \leq\mbox{Lip} (u) + \mbox{Lip}(Du)\\
&  \leq\mbox{ess sup}\left(  |u|+|Du|+|D^{2}u| \right) \\
&  = \|u\|_{W^{2,\infty}}%
\end{align*}
Letting $h\rightarrow0$ in \eqref{est1} yields the estimate that holds on $B_{1/4}.$   Now take any interior point $x_0$ and
consider the equation%
\begin{equation}
\partial_{y_{l}}\partial_{y_{j}}\tilde{F}^{jl}(y,Dv,D^{2}v)=\partial_{y_{k}%
}\tilde{a}^{k}(y,Dv,D^{2}v)-\tilde{b}(y,Dv,D^{2}v)\label{eqnv}%
\end{equation}
with
\begin{align*}
\tilde{F}^{jl}(y,Dv,D^{2}v)  & =F^{jl}(x_{0}+ry,rDv(x_{0}+ry),D^{2}%
v(x_{0}+ry))\\
\tilde{a}^{k}(y,Dv,D^{2}v)  & =ra^{k}(x_{0}+ry,rDv(x_{0}+ry),D^{2}%
v(x_{0}+ry)\\
\tilde{b}(x,Dv,D^{2}v)  & =r^{2}b(x_{0}+ry,rDv(x_{0}+ry),D^{2}v(x_{0}+ry).
\end{align*}
Suppose that
\[
B_{r}(x_{0})\subset B_{1}.
\]
Define
\[
v(y)=\frac{u(x_{0}+ry)}{r^{2}}.
\]
One can check that $v$ satisfies (\ref{eqnv}) on $B_{1}$ whenever $u$
satisfies (\ref{main}).

Noting that 
\[
\frac{\partial \tilde{F}^{jl}}{\partial v_{ik}}(y,Dv,D^{2}v)=\frac{\partial F^{jl}%
}{\partial u_{ik}}(x_{0}+ry,rDv(x_{0}+ry),D^{2}v(x_{0}+ry))
\]
we see equation (\ref{eqnv}) and the solution $v$ will  satisfy the closeness condition (\ref{cons}) as well.  This rescaling argument allows us to claim an estimate holds at any interior point in $B_{1}.$    \hfill $\square$

\subsection{Proof of Theorem \ref{main_2}}

We start by boosting regularity from $C^{2,\alpha}$ to $C^{3,\alpha}$.

\begin{proposition}
\label{reg1} Suppose that $u\in C^{2,\alpha}(B_{1})$ satisfies the $\Lambda
$-uniform equation (\ref{main}) on $B_{1}$, and let $0<\delta< \alpha$. Then
$D^{3}u\in C^{\alpha-\delta/2}(B_{1/5})$ and satisfies the following
estimate:
\begin{equation}
||D^{3}u||_{C^{\alpha-\delta/2}(B_{1/5})}\leq C(||u||_{W^{2,\infty}(B_{1}%
)},\Lambda,\alpha,\delta). \label{er}%
\end{equation}

\end{proposition}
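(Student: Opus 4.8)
The plan is to differentiate the equation in a direction $e_p$ by passing to the difference quotient $f = u^{h}_p$, and to exploit the fact that we now start from a genuinely $C^{2,\alpha}$ solution so that the frozen coefficients are Hölder continuous. Concretely, $f$ satisfies the linearized double divergence equation \eqref{main3} with leading coefficient $\beta^{ij,kl}$ given by \eqref{bterm}, and the key new observation — compared to the proof of Theorem \ref{main_1} — is that since $D^2 u \in C^{\alpha}$, the function $\beta^{ij,kl}(x)$ is itself $C^{\alpha}$ in $x$, uniformly in $h$, with $C^{\alpha}$-norm controlled by $\|u\|_{C^{2,\alpha}}$; similarly $\gamma^{jl}$, $\psi^k$, $\zeta$ are bounded with quantitative estimates. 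So \eqref{main3} is now a fourth order double divergence equation with Hölder continuous leading coefficient, and $f \in W^{2,2}$ with uniform bounds (by Proposition \ref{reg3}, or directly from $C^{2,\alpha}$).

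The main step is a Campanato-type iteration, exactly in the spirit of the proof of Proposition \ref{reg}, but now freezing $\beta^{ij,kl}$ at the center of each ball rather than at a single global point. Pick $y \in B_{1/5}$ and $r$ small; let $w$ solve the constant coefficient BVP from Lemma \ref{LM} on $B_r(y)$ with leading coefficient $a_0^{ij,kl} := \beta^{ij,kl}(y)$ and with $w = f$, $Dw = Df$ on $\partial B_r(y)$. Setting $v = f - w \in H^2_0(B_r(y))$, subtracting the two weak formulations as in \eqref{eq36} gives
\[
\int_{B_r(y)} a_0^{ij,kl} v_{ik} v_{jl}\, dx = \int_{B_r(y)} \big(\beta^{ij,kl}(y) - \beta^{ij,kl}(x)\big) f_{ik} v_{jl}\, dx - \int_{B_r(y)} \gamma^{jl} v_{jl}\, dx - \int_{B_r(y)} \big(\psi^k v_k^{-h_p} + \zeta v^{-h_p}\big)\, dx.
\]
Now the coefficient-difference term is bounded by $[\beta]_{C^{\alpha}} r^{\alpha} \|D^2 f\|_{L^2(B_r(y))} \|D^2 v\|_{L^2(B_r(y))}$, which is the replacement for the $\varepsilon_0$ smallness used before: the Hölder smallness $r^\alpha$ plays the role of $\varepsilon_0$. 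The remaining three terms are estimated exactly as in \eqref{eq38}–\eqref{e310}, producing $r^{\frac{n-2+2\alpha}{2}}$ factors (with $\gamma^{jl},\psi^k,\zeta$ now merely bounded, so $L^2(B_r)$ norms give $r^{n/2}$, which is even better). Combining with Caccioppoli-type bounds and Corollary \ref{Cor2} for the constant coefficient comparison, one gets
\[
\int_{B_\rho(y)} |D^2 f|^2 \le C\Big[ \big(\tfrac{\rho}{r}\big)^n + r^{2\alpha}\Big] \int_{B_r(y)} |D^2 f|^2 + C r^{n-2+2\alpha}.
\]
Since the ``$\varepsilon$'' term $r^{2\alpha}$ now genuinely goes to zero with $r$ (rather than being a fixed small constant), one cannot apply Lemma \ref{HanLin} verbatim with $\varepsilon$ constant; instead one uses the standard iteration-lemma variant where the perturbation is $r^{2\alpha}$ — for $r$ below a threshold depending on the data one still concludes $\int_{B_\rho(y)} |D^2 f|^2 \le C \rho^{n - 2 + 2\alpha - \delta}$ for any small $\delta > 0$ (losing $\delta$ because the perturbation exponent $2\alpha$ may be smaller than the gap). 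By Morrey's lemma this gives $Df \in C^{\alpha - \delta/2}$ on $B_{1/5}$ uniformly in $h$, hence letting $h \to 0$ yields $D^2 u \in C^{\alpha-\delta/2}$, i.e. $u \in C^{2,\alpha}$ again but — and this is the real gain — now applied to the equation \eqref{main3} itself, whose coefficients are $C^{\alpha}$: one more round, differentiating once more or simply invoking linear $C^{3,\alpha}$-type estimates for the fourth order equation with Hölder coefficients satisfied by $f$, upgrades $D^2 f$, and letting $h \to 0$ delivers $D^3 u \in C^{\alpha-\delta/2}(B_{1/5})$ with the stated dependence.

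The main obstacle is bookkeeping the interplay between the $h$-uniformity and the coefficient regularity: one must verify that $[\beta^{ij,kl}(\cdot)]_{C^\alpha(B_r)}$ and $\|\gamma^{jl}\|_{L^\infty}$, $\|\psi^k\|_{L^\infty}$, $\|\zeta\|_{L^\infty}$ are bounded \emph{independently of $h$} by $\|u\|_{C^{2,\alpha}}$ and $\|DF\|_{L^\infty}$, $\|a^k\|_{L^\infty}$, $\|b\|_{L^\infty}$ — this requires differentiating the integral representations \eqref{bterm}, \eqref{f}, \eqref{gamma3}, \eqref{gamma4} under the integral sign and using that $\xi_0 + t\vec V$ stays in $U$ with $\vec V = O(h)$ in $C^\alpha$. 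The secondary technical point is the loss of $\delta$: one needs the version of the iteration lemma handling a variable (shrinking) perturbation term, and to track that the threshold radius $r_0$ below which the iteration closes depends only on $\Lambda, n, \alpha, \delta$ and $\|u\|_{C^{2,\alpha}}$, so that the final estimate on $B_{1/5}$ follows by covering and a standard scaling/chaining argument as at the end of the proof of Theorem \ref{main_1}.
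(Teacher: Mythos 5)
The key observation you isolate is the right one and is exactly the paper's: freeze the leading coefficient at the ball center $y$ (the paper writes $\beta^{ij,kl}(0)$ but clearly means $\beta^{ij,kl}(y)$), and let the H\"older modulus $\Upsilon(r)\le C r^{\alpha}$ of $\beta$ play the role that the uniform smallness $\varepsilon_0$ played in Proposition \ref{reg}, paying a $\delta$-loss because the perturbation is not a fixed small constant. So the comparison and first Morrey-type iteration agree with the paper. However, the endgame has a genuine gap. You stop after the Morrey decay $\int_{B_\rho(y)}|D^2f|^2\le C\rho^{n-2+2\alpha-\delta}$ and apply Morrey's lemma to conclude $Df\in C^{\alpha-\delta/2}$; letting $h\to 0$ this yields $D^2u\in C^{\alpha-\delta/2}$, which is weaker than the hypothesis $D^2u\in C^\alpha$ and in particular does not produce $D^3u$. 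The remaining passage (``one more round, differentiating once more or simply invoking linear $C^{3,\alpha}$-type estimates for the fourth order equation with H\"older coefficients'') begs the question: the $C^{2,\alpha}$-estimate for a fourth-order double-divergence equation with $C^\alpha$ coefficients and difference-quotient right-hand side is precisely the content of the proposition, and the paper is proving it, not citing it.

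What is missing is a second, Campanato-type iteration on the mean oscillation $\int_{B_\rho(y)}|D^2f-(D^2f)_\rho|^2$, and for that two further ingredients you have not supplied are needed. First, you treat the lower-order data via $L^\infty$ (or the old $L^{2q}$) bounds, giving $r^{n/2}$ or $r^{(n-2+2\alpha)/2}$ factors; this is not enough to close the Campanato iteration, which requires the better exponent $r^{(n+2\alpha)/2}$. The paper obtains this by observing that $v\in H_0^2(B_r(y))$ annihilates constants under integration by parts, so one may replace $\gamma^{jl},\psi^k,\zeta$ by $\gamma^{jl}-\gamma^{jl}(y)$, etc., and then use their $C^\alpha$ moduli (which do hold uniformly in $h$ when $D^2u\in C^\alpha$, a fact you should record). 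Second, the problematic term $\Upsilon^2(r)\|D^2f\|^2_{L^2(B_r(y))}$ must be estimated using the output $\|D^2f\|^2_{L^2(B_r(y))}\le C\,r^{n-\delta}$ of the first (Morrey) iteration, producing $r^{n+2\alpha-\delta}$. Feeding both into Corollary \ref{Cor2} (the second displayed inequality, with the means subtracted) and Lemma \ref{HanLin} gives $\int_{B_\rho(y)}|D^2f-(D^2f)_\rho|^2\le C\rho^{n+2\alpha-\delta}$, hence $D^2f\in C^{\alpha-\delta/2}$ by the Campanato characterization, and only then does $h\to 0$ yield $D^3u\in C^{\alpha-\delta/2}(B_{1/5})$.
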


\begin{proof}
We assume that $u$ enjoys uniform $C^{2,\alpha}$ estimates on $B_{9/10}.$ As
before we take a difference quotient of the solution $u$ to (\ref{main}) to
get (\ref{main3}) with $f=u^{h_{p}}$, for some $h<1/100$. Since $D^{2}u\in
C^{\alpha}(\bar{B}_{9/10})$, the measurable coefficients are now integrals of
H\"{o}lder continuous functions, when defined for any $x\in B_{3/4}$ as
follows:
\begin{align*}
\beta^{ij,kl}(x)  &  =\int_{0}^{1}\frac{\partial F^{jl}}{\partial u_{ik}}%
(\xi_{0}+t\vec{V})dt\in C^{\alpha}(B_{3/4})\\
\gamma^{jl}(x)  &  =\int_{0}^{1}\left(  \frac{\partial F^{jl}}{\partial u_{k}%
}({\xi_{0}+t\vec{V}})u_{k}^{h_{p}}+\frac{\partial F^{jl}}{\partial x_{p}}%
(\xi_{0}+t\vec{V})\right)  dt\in C^{\alpha}(B_{3/4}).
\end{align*}
Note also that $\psi^{k}(x)\in C^{\alpha}(B_{3/4}).$ \ In particular \
\[
\left\vert \beta^{ij,kl}(x)-\beta^{ij,kl}(y)\right\vert \leq C_{3}\left\vert
x-y\right\vert ^{\alpha}\text{ .}%
\]
Again, fixing $y\in B_{1/4}$ for a fixed $r<\frac{1}{2}$\ we let $w$ solve the
boundary value problem
\begin{align*}
\int_{B_{r}(y)}\beta^{ij,kl}(0)w_{ij}\eta_{kl}\,dx  &  =0,\ \ \ \forall\eta\in
C_{0}^{\infty}(B_{r}(y))\\
w  &  =f\text{, }D w=D f\ \ \ \text{ on }\partial B_{r}(y)
\end{align*}
and repeat verbatim the steps leading to (\ref{eq36}), with $a_{0}^{ij,kl}$
being replaced by $\beta^{ij,kl}(0),$ again taking $v=f-w\in H_{0}^{2}%
(B_{r}(y))$. Thus by (\ref{main3})
\[
\int_{B_{r}(y)}\beta^{ij,kl}(0)v_{ij}v_{kl}dx=\int_{B_{r}(y)}\left(
\beta^{ij,kl}(0)-\beta^{ij,kl}(x)\right)  f_{ik}v_{jl}dx-\int_{B_{r}%
(y)}\left(  \gamma^{jl}v_{jl} + \psi^{k}v_{k}^{-h_{p}}+\zeta v^{-h_{p}%
}\right)  dx.
\]
Now this time, we define
\begin{equation}
\Upsilon(r)=\sup\left\{  \left|  \beta^{ij,kl}(x)-\beta^{ij,kl}(x^{\prime})
\right|  \mid x,x^{\prime}\in B_{r}(y)\right\}  \label{nbb}%
\end{equation}
which enjoys an estimate from the H\"{o}lder estimate on $D^{2}u:$
\begin{equation}
\Upsilon(r)\leq C_{4}r^{\alpha}. \label{nbb1}%
\end{equation}

Since $v\in H_{0}^{2}(B_{r}(y))$, we have, via integration by parts, that
\begin{align*}
\int_{B_{1}}\gamma^{jl}(y)v_{jl}(x)\,dx  &  =0\\
\int_{B_{1}}\psi^{k}(y)v_{k}^{-h_{p}}(x)\,dx  &  =0
\end{align*}
and
\[
\int_{B_{1}}\zeta(y)v^{-h_{p}}(x)dx =\zeta(y)\frac{1}{h}\left(  \int_{B_{1}%
}v(x-h_{p})dx-\int_{B_{1}}v(x)dx\right)  =0
\]
so we may write%
\begin{align*}
\int_{B_{1}}  &  \left(  \gamma^{jl}v_{jl}+\psi^{k}v_{k}^{-h_{p}}+\zeta
v^{-h_{p}}\right)  dx\\
&  =\int_{B_{1}}\left(  \left[  \gamma^{jl}(x)-\gamma^{jl}(y)\right]
v_{jl}+\left[  \psi^{k}(x)-\psi^{k}(y)\right]  v_{k}^{-h_{p}}+\left[
\zeta(x)-\zeta(y)\right]  v^{-h_{p}}\right)  dx.
\end{align*}
Now
\[
\int_{B_{1}}\left\vert [\gamma^{jl}(x)-\gamma^{jl}(y)]v_{jl}\right\vert
dx\leq\left\Vert \gamma(x)-\gamma(y)\right\Vert _{L^{2}(B_{r}(y))}\left\Vert
D^{2}v\right\Vert _{L^{2}(B_{r}(y))}\leq C_{5}\left(  r^{2\alpha}r^{n}\right)
^{\frac{1}{2}}\left\Vert D^{2}v\right\Vert _{L^{2}(B_{r})}%
\]
and similarly,%
\[
\int_{B_{1}}\left\vert [\psi^{k}(x)-\psi^{k}(y)]v_{k}^{-h_{p}}\right\vert
dx\leq C_{6}\left( r^{2\alpha}r^{n}\right)^{\frac{1}{2}}\left\Vert D^{2}%
v\right\Vert _{L^{2}(B_{r}(y))}%
\]

%

\begin{align}
\int_{B_{1/2}}\left\vert [\zeta(x)-\zeta(y)]v^{-h_{p}}(x)\right\vert dx &\leq
C_{7}\left( r^{2\alpha}r^{n}\right)^{\frac{1}{2}}\Vert Dv\Vert_{L^{2}(B_{r}(y)
)}\nonumber\\
&\leq C_{7}\left( r^{2\alpha}r^{n}\right)^{\frac{1}{2}} C_{p}|B_{r}|^{\frac{1}{n}%
}\left\Vert D^{2}v\right\Vert _{L^{2}(B_{r}(y))}\nonumber\\
&\leq C^{\prime}_{p}C_{7}\left(  r^{2\alpha}r^{n}\right)^{\frac{1}{2}}\left\Vert
D^{2}v\right\Vert _{L^{2}(B_{r}(y))}\nonumber
\end{align}
where $C_{p}$ is from the Poincar\'{e} inequality \cite[(7.44)]{GT},
$C_{p}^{\prime}=C_{p}|B_{1}|$, and

%

\begin{align*}
\left\vert \gamma(x)-\gamma(y)\right\vert  &  \leq C_{5}r^{\alpha}\\
\left\vert \psi(x)-\psi(y)\right\vert  &  \leq C_{6}r^{\alpha}\\
\left\vert \zeta(x)-\zeta(y)\right\vert  &  \leq C_{7}r^{\alpha}.
\end{align*}
(Recall the components of these functions are smooth as functions of $D^{2}u$
so these will be H\"{o}lder continuous now as $D^{2}u$ is H\"{o}lder
continuous.)\ \ Note that, for $\Lambda$ the ellipticity constant for $\beta$
we have
\[
\Lambda\left\Vert D^{2}v\right\Vert _{L^{2}(B_{r}(y))}^{2}\leq\Upsilon
(r)\left\Vert D^{2}f\right\Vert _{L^{2}(B_{r}(y))}\left\Vert D^{2}v\right\Vert
_{L^{2}(B_{r}(y))}+(C_{5}+C_{6}+C_{p}^{\prime}C_{7})\left(  r^{2\alpha}%
r^{n}\right)^{\frac{1}{2}}\left\Vert D^{2}v\right\Vert _{L^{2}(B_{r}(y))}.
\]
That is
\[
\left\Vert D^{2}v\right\Vert _{L^{2}(B_{r}(y))}\leq\frac{1}{\Lambda}\left\{
\Upsilon(r)\left\Vert D^{2}f\right\Vert _{L^{2}(B_{r}(y))}+(C_{5}+C_{6}%
+C_{p}^{\prime}C_{7})\left(  r^{2\alpha}r^{n}\right)  ^{\frac{1}{2}}\right\}
\]
or%
\[
\left\Vert D^{2}v\right\Vert _{L^{2}(B_{r}(y))}^{2}\leq\frac{2}{\Lambda^{2}%
}\left\{  \Upsilon^{2}(r)\left\Vert D^{2}f\right\Vert _{L^{2}(B_{r}(y))}%
^{2}+(C_{5}+C_{6}+C_{p}^{\prime}C_{7})^{2}r^{2\alpha}r^{n}\right\}  .
\]
Using Corollary \ref{Cor2}, for any $0<\rho\leq r$ we get
\begin{align}
\int_{B_{\rho}(y)}  &  \left\vert D^{2}f-(D^{2}f)_{\rho}\right\vert ^{2}%
\leq4C_{2}\left(  \frac{\rho}{r}\right)  ^{n+2}\int_{B_{r}(y)}\left\vert
D^{2}f-(D^{2}f)_{r}\right\vert ^{2}+\left(  8+16C_{2}\right)  \int_{B_{r}%
(y)}\left\vert D^{2}v\right\vert ^{2}\nonumber\\
&  \leq4C_{2}\left(  \frac{\rho}{r}\right)  ^{n+2}\int_{B_{r}(y)}\left\vert
D^{2}f-(D^{2}f)_{r}\right\vert ^{2} +\frac{2}{\Lambda}\left\{  \Upsilon
^{2}(r)\left\Vert D^{2}f\right\Vert _{L^{2}(B_{r}(y))}^{2}+(C_{5}+C_{6}%
+C_{p}^{\prime}C_{7})^{2}r^{2\alpha}r^{n}\right\}  . \label{AA}%
\end{align}
Next, to get decay on the $\left\Vert D^{2}f\right\Vert _{L^{2}}^{2}$
factor, we will find an $r_{0}<1/2$ to be determined, such that for $r<r_{0}$
we have
\[
\int_{B_{\rho}(y)}\left\vert D^{2}f\right\vert ^{2}\leq C_{9}\rho^{n-\delta}
\label{nextgoal}%
\]
where $\delta=1-\tilde{\alpha}<\alpha$. \ In order to do this, first observe
(\ref{2hl}). We may replace $\varepsilon_{0}$ by $\Upsilon^{2}(r)$ by virtue
of (\ref{nbb}). We let $\tilde{\alpha}=1-\delta$, which will result in a
different value $\tilde{q}$ in the derivation leading up to (\ref{2hl}). By
repeating the derivation of (\ref{2hl}) replacing only $\varepsilon_{0}$ by
$\Upsilon^{2}(r)$, $\alpha$ by $\tilde{\alpha}=1-\delta,$ and $K$ by a
$\tilde{K}$ determined by the different norms arising from now the exponent
$\tilde{q}$ $=n/(2-2\tilde{\alpha})$, we get
\begin{align*}
\int_{B_{\rho}(y)}\left\vert D^{2}f\right\vert ^{2}  &  \leq4C_{1}\left(
\frac{\rho}{r}\right)  ^{n}\left\Vert D^{2}f\right\Vert _{L^{2}(B_{r}(y))}%
^{2}+\frac{2\left(  2+8C_{1}\right)  }{\Lambda^{2}}\left(  \Upsilon
^{2}(r)\left\Vert D^{2}f\right\Vert _{L^{2}(B_{r}(y))}^{2}+\tilde
{K}r^{n-2+2\tilde{\alpha}}\right) \\
&  =\left(  4C_{1}\left(  \frac{\rho}{r}\right)  ^{n}+\frac{2\left(
2+8C_{1}\right)  }{\Lambda^{2}}\Upsilon^{2}(r)\right)  \left\Vert
D^{2}f\right\Vert _{L^{2}(B_{r}(y))}^{2}+\frac{2\left(  2+8C_{1}\right)
}{\Lambda^{2}}\tilde{K}r^{n-2+2\tilde{\alpha}}.
\end{align*}
As before, denote 
\begin{align*}
\phi(\rho)  &  =\int_{B_{\rho}(y)}\left\vert D^{2}f\right\vert ^{2}\\
A  &  =4C_{1}\\
\varepsilon &  =\frac{2\left(  2+8C_{1}\right)  }{\Lambda^{2}}\Upsilon
^{2}(r_{0})\\
B  &  =\frac{2\left(  2+8C_{1}\right)  }{\Lambda^{2}} \tilde{K}\\
\alpha &  =n\\
\beta &  =n-2\delta\\
\gamma &  =n-\delta.
\end{align*}
Now by (\ref{nbb1}) and Lemma \ref{HanLin}, there exists $r_{0}$ small enough
such that
\[
\frac{2\left(  2+8C_{1}\right)  }{\Lambda^{2}}\Upsilon^{2}(r_{0}%
)\leq\varepsilon^{\ast}\left(  A,\alpha,\beta,\gamma\right)  , \label{rnot}%
\]
for the $\varepsilon^{\ast}$ provided by Lemma \ref{HanLin}, and we have for
$\rho<r_{0}$%
\begin{align*}
\phi(\rho)  &  \leq C_{8}\left\{  \left(  \frac{\rho}{r}\right)  ^{n-\delta
}\phi(r)+Br^{n-2\delta}\right\} \\
&  \leq C_{8}\frac{1}{r_{0}^{n-\delta}}\rho^{n-\delta}\left\Vert
D^{2}f\right\Vert _{L^{2}(B_{r_{0}})}+B\rho^{n-2\delta}\\
&  \leq C_{9}\rho^{n-\delta}.
\end{align*}

Turning back to (\ref{AA}), we now have, for $r<r_{0}$%
\begin{align*}
\int_{B_{\rho}(y)}\left\vert D^{2}f-(D^{2}f)_{\rho}\right\vert ^{2}  &
\leq4C_{2}\left(  \frac{\rho}{r}\right)  ^{n+2}\int_{B_{r}(y)}\left\vert
D^{2}f-(D^{2}f)_{r}\right\vert ^{2}\\
&  +\frac{2}{\Lambda}\left\{  \Upsilon^{2}(r)C_{9}r^{n-\delta}+(C_{5}%
+C_{6}+C_{p}^{\prime}C_{7})^{2}r^{2\alpha}r^{n}\right\} \\
&  \leq4C_{2}\left(  \frac{\rho}{r}\right)  ^{n+2}\int_{B_{r}(y)}\left\vert
D^{2}f-(D^{2}f)_{r}\right\vert ^{2}+\frac{2}{\Lambda}C_{4}C_{9}r^{2\alpha
+n-\delta}\\
&  +\frac{2}{\Lambda}(C_{5}+C_{6}+C_{p}^{\prime}C_{7})^{2}r^{2\alpha}r^{n}.
\end{align*}
Now we can apply Lemma \ref{HanLin} yet again, this time with
\begin{align*}
\phi(\rho)  &  =\int_{B_{\rho}(y)}\left\vert D^{2}f-(D^{2}f)_{\rho}\right\vert
^{2}\\
A  &  =4C_{2}\\
\alpha &  =n+2\\
B  &  =\frac{2}{\Lambda}\left[  C_{4}C_{9}+(C_{5}+C_{6}+C_{p}^{\prime}C_{7})^{2}\right] \\
\beta &  =n+2\alpha-\delta\\
\gamma &  =n+2\alpha.
\end{align*}
We then conclude that
\[
\int_{B_{r}(y)}\left\vert D^{2}f-(D^{2}f)_{\rho}\right\vert ^{2}\leq C_{10}r^{n+2\alpha-\delta} \label{fg}
\]
for $r<r_{0}$ (and will be necessarily true for $r\in\lbrack r_{0},\frac{1}
{2}]$ as well, perhaps modifying $C_{10}$). \ Noting that this applies for any
$y\in B_{1/4}$ we apply \cite[Theorem 3.1]{HanLin} \ to $D^{2}f$ to conclude
that $D^{2}f\in C^{\left(  2\alpha-\delta\right)  /2}(B_{1/5}).$ Noting
$f=u^{h_{p}}$ we may take a limit and conclude that $u$ must enjoy uniform
$C^{3,\alpha}$ estimates on $B_{1/5}$.
\end{proof}

We now apply the regularity bootstrapping procedure as in \cite{BW1} to obtain smoothness.

\medskip

\noindent\textit{Proof of Theorem \ref{main_2}.} We may scale the estimate
provided in Proposition \ref{reg1} to get $u\in C^{3,\alpha}(B_{r})$ for any
$r<1$. Letting $f=u^{h_{p_{1}}}$ we may apply the dominated convergence theorem while
passing the limit as $h\rightarrow0$ to the equation (\ref{main3}) and
conclude that, for $v=u_{p_{1}}$
\[
\int_{B_{1}}\left(  \beta^{ij,kl}v_{ik}\eta_{jl}+\gamma^{jl}\eta_{jl}-\psi
^{k}\eta_{kp_{1}}-\zeta\eta_{p_{1}}\right)  dx=0
\]
where
\[
\beta^{ij,kl}(x)=\frac{\partial F^{jl}}{\partial u_{ik}}(x,Du,D^{2}u)\in
C^{1,\alpha}(B_{r})
\]
\[
\gamma^{jl}(x)=\frac{\partial F^{jl}}{\partial u_{k}}(x,Du,D^{2}u))f_{k}(x)+\frac{\partial F^{jl}}{\partial x_{p_{1}}}(x,Du,D^{2}u)\in
C^{1,\alpha}(B_{r}).
\]
Noting that the functions $\psi^{k},\zeta$ are $C^{1,\alpha}$ when $u$ in
$C^{3,\alpha},$ we can integrate by parts in the last two terms to get
\[
\int_{B_{1}}\left(  \beta^{ij,kl}v_{ik}\eta_{jl}+\gamma^{jl}\eta_{jl}
+\partial_{x_{p_{1}}}\psi^{k}\eta_{k}+\partial_{x_{p_{1}}}\zeta\eta\right)
dx=0.
\]
Following the difference quotient procedure leading to (\ref{main3}), this
time in the direction $p_{2}$
\[
\int_{B_{1}}\bigg([\beta^{ij,kl}v_{ik}+\gamma^{jl}]^{h_{p}}\eta_{jl}+\partial_{x_{p_{1}}}\psi^{k}\eta_{k}^{-h_{p_{2}}}+\partial_{x_{p_{1}}}\zeta\eta^{-h_{p_{2}}}\bigg)\,dx=0.
\]
Expanding
\[
\int_{B_{1}} \left(  \left(  \beta^{ij,kl}\right)  ^{h_{p_{2}}}v_{ik}+\left(
\gamma^{jl}\right)  ^{h_{p_{2}}}+\left(  \beta^{ij,kl}\right)  v_{ik}^{h_{p_{2}}}\right)  \eta_{jl}+\partial_{x_{p_{1}}}\psi^{k}\eta_{k}^{-h_{p_{2}}}+\partial_{x_{p_{1}}}\zeta\eta^{-h_{p_{2}}}\bigg)\,dx=0.
\]
Observe that each of the terms $\left(  \beta^{ij,kl}\right)^{h_{p_{2}}}v_{ik},\left(  \gamma^{jl}\right)^{h_{p_{2}}},\partial_{x_{p_{1}}}\psi
^{k},\partial_{x_{p_{1}}}\zeta$ are $C^{\alpha}$ with uniform estimates on
$B_{r}.$

So letting
\begin{align*}
\tilde{\gamma}^{jl}  &  =\left(  \beta^{ij,kl}\right)  ^{h_{p_{2}}}v_{ik}+\left(  \gamma^{jl}\right)  ^{h_{p_{2}}}\\
\tilde{\psi}^{k}  &  =\partial_{x_{p_{1}}}\psi^{k}\\
\tilde{\zeta}  &  =\partial_{x_{p_{1}}}\zeta
\end{align*}
we see that $\tilde{v}=v^{h_{p_{2}}}$ satisfies
\begin{equation}
\int_{B_{1}}\left(  \beta^{ij,kl}\tilde{v}_{ik}\eta_{jl}+\tilde{\gamma}^{jl}\eta_{jl}+\tilde{\psi}^{k}\eta_{k}^{-h_{p_{2}}}+\tilde{\zeta}\eta^{-h_{p_{2}}}\right)  dx=0 \label{ummj}
\end{equation}
which is of identical form as equation (\ref{main3}). By our $\Lambda
$-uniform assumption on (\ref{main}), the above equation is uniformly
elliptic, as $\beta$ has not changed. \ Now we apply verbatim the proof of
Proposition \ref{reg1}, noting that all coefficients in sight are H\"{o}lder
continuous, we get $D^{2}\tilde{v}\in C^{\alpha^{\prime}}$. Since $\tilde{v}$
is the difference quotient of a derivative of $u,$ we may take $h\rightarrow0$
and conclude that $u_{p_{1} p_{2}}\in C^{2,\alpha^{\prime}}(B_{r})$ with
estimates for any $\alpha^{\prime}<\alpha,$ for $r<1$, thus $u\in
C^{4,\alpha^{\prime}}(B_{r}).$

Note that when bootstrapping from $C^{m-1,\alpha}$ to $C^{m,\alpha^{\prime}}$
via (\ref{ummj}) for $\tilde{v}= u_{p_{1} p_{2} ... p_{m-3}}^{p_{m-2}} $ we
may take the limit of (\ref{ummj}) to get
\[
\int_{B_{1}}\left(  \beta^{ij,kl}\tilde{v}_{p_{m-2} ik}\eta_{jl}+\tilde
{\gamma}^{jl}\eta_{jl}-\tilde{\psi}^{k}\eta_{kp_{m-2}}-\tilde{\zeta}%
\eta_{p_{m-2}}\right)  dx=0
\]
but now $\tilde{\psi}^{k},\tilde{\zeta}\in C^{1,\alpha}$ so we may integrate
by parts and take another difference quotient in another direction $p_{m-1}$
to obtain another expression very similar to (\ref{ummj}), again with
H\"{o}lder regularity holding for all the coefficients and one higher order of
derivative arising in $\tilde{v}.$ Repeating the proof of Proposition
\ref{reg1}, we conclude $u_{p_{1} p_{2} ... p_{m-1}}\in C^{2,\alpha}(B_{r})$.
\ In this way we can obtain estimates of any order.
\hfill$\square$

\medskip

\noindent\textit{Proof of Theorem \ref{main1:Intro}.} Observing that condition
(\ref{cons1}) is equivalent to condition (\ref{cons}), the result follows
immediately from Theorems \ref{main_1} and \ref{main_2}.
\hfill$\square$

\section{Derivation of the Euler-Lagrange equations on a Riemannian ball}

\label{sec_EL}

We start by deriving the equation for a manifold that is volume stationary
among gradient graphs.

\begin{definition}
Let $\Gamma$ be the set of gradient graphs of functions $u\in C^{1,1}(B_{1})$
with $Du(0)=0$ and $\left\Vert Du \right\Vert _{L^{\infty}} \leq1$, where
$B_{1}\subset\mathbb{R}^{n}$, and
\[
\Gamma(u)=\left\{  \left(  x,Du\left(  x\right)  \right)  :x\in B_{1}\right\}
\subset B_{2}^{2n}.
\]
Let $h$ be a Riemannian metric on the euclidean ball $B_{2}^{2n}$ in
$\mathbb{R}^{2n}$ with $h\left(  0\right)  =\delta_{0}$. We say that
$\Gamma(u)$ is \textit{volume stationary in $(B_{1},h)$ among gradient graphs
in $\Gamma$}, if
\[
\left.  \frac{d}{dt}\mbox{Vol}_{h}(\Gamma(u+t\eta))\right|  _{t=0}=0,
\ \ \ \forall\eta\in C^{\infty}_{c}(B_{1})
\]
where $\mbox{Vol}_{h}$ is volume measured in $h$.
\end{definition}

The volume functional $\mbox{Vol}_{h}$ acting on $\Gamma$ is given by
\[
\mbox{Vol}_{h}(\Gamma(u))=\int_{B_{1}}\sqrt{\det(g_{ij}(x))}\,dx
\]
where, in the standard euclidean basis $\{e_{1},\dots,e_{n},e_{1+n}%
,\dots,e_{2n}\}$ of $\mathbb{R}^{2n}=\mathbb{R}^{n}\times\mathbb{R}^{n}$, the
induced metric $g$ from $h$ on $\Gamma(u)\subset\mathbb{R}^{n}\times
\mathbb{R}^{n}$ is
\begin{align}
g_{ij} &  =h(e_{i}+\sum_{k}u_{ki}e_{k+n},e_{j}+\sum_{l}u_{lj}e_{l+n}%
)\label{induced metric}\\
&  =h_{ij}+\sum_{k}u_{ki}h_{k+n,j}+\sum_{l}u_{lj}h_{l+n,i}+\sum_{k,l}%
u_{ki}u_{lj}h_{l+n,k+n}\nonumber
\end{align}
with $1\leq i,j\leq n$. We may write
\begin{align*}
h_{ij}(x,Du(x)) &  =\delta_{ij}+{\mathcal{A}}_{ij}(x,Du(x))\\
h_{l+n,k+n}(x,Du(x)) &  =\delta_{kl}+{\mathcal{B}}_{kl}(x,Du(x))\\
h_{k+n,j}(x,Du(x)) &  ={\mathcal{C}}_{kj}(x,Du(x)).
\end{align*}
Note that $\mathcal{C}$ need not be symmetric, while $\mathcal{A}$ and
$\mathcal{B}$ are symmetric. In block diagonal form of matrices we have
\begin{equation}
h=\left(
\begin{array}
[c]{cc}%
I & 0\\
0 & I
\end{array}
\right)  +\left(
\begin{array}
[c]{cc}%
\mathcal{A} & \mathcal{C}\\
\mathcal{C}^{T} & \mathcal{B}%
\end{array}
\right)  .\label{defhm}%
\end{equation}
Now we have
\begin{equation}
g_{ij}=\delta_{ij}+u_{ik}\delta^{kl}u_{lj}+{\mathcal{A}}_{ij}+u_{im}%
u_{pj}\delta^{mk}\delta^{pl}{\mathcal{B}}_{kl}+u_{ki}\delta^{kl}{\mathcal{C}%
}_{lj}+u_{kj}\delta^{kl}{\mathcal{C}}_{li}.\label{gdur}%
\end{equation}
Therefore, as a matrix-valued function defined on $(x,Du)$, the induced metric
$g$ is quadratic in $D^{2}u$. In particular,
\begin{equation}
\left\vert \frac{\partial g_{ij}}{\partial u_{kl}}\right\vert \leq C(n,\Vert
h\Vert_{C^{0}})\sup\left\vert D^{2}u\right\vert +C(n)\sup_{i,j}\left\vert
h_{i+n,j}\right\vert ,\label{g derivative}%
\end{equation}
where (and in sequel) we set
\begin{equation}
\Vert h\Vert_{C^{0}}=\sup_{B_{2}}\left\{  \left\vert h_{pq}\right\vert ,1\leq
p,q\leq2n\right\}  .\label{noths}%
\end{equation}
Now, we compute the first variation of $\mbox{Vol}_{h}$. Take a variation
generated by $\eta\in C_{c}^{\infty}(B_{1})$ for the path
\begin{equation}
\gamma\lbrack t](x)=u(x)+t\eta(x),\label{verteta}%
\end{equation}
which varies the manifold $\Gamma(u)$ along the $y$-direction in $B_{2}^{2n}$.
Denote the induced metric from $h$ on $\Gamma(u+t\eta)$ by $g(t)$.
Straightforwardly,
\begin{align*}
g_{ij}(t)=\  &  \delta_{ij}+\left(  u_{ik}+t\eta_{ik}\right)  \delta
^{kl}\left(  u_{lj}+t\eta_{lj}\right)  \\
&  +{\mathcal{A}}_{ij}(x,Du(x)+tD\eta(x))\\
&  +\left(  u_{im}+t\eta_{im}\right)  \left(  u_{pj}+t\eta_{pj}\right)
\delta^{mk}\delta^{pl}{\mathcal{B}}_{kl}(x,Du(x)+tD\eta(x))\\
&  +\left(  u_{ki}+t\eta_{ki}\right)  \delta^{kl}{\mathcal{C}}_{lj}%
(x,Du(x)+tD\eta(x))\\
&  +\left(  u_{kj}+t\eta_{kj}\right)  \delta^{kl}{\mathcal{C}}_{li}%
(x,Du(x)+tD\eta(x)).
\end{align*}

Next, we compute the derivative at $t=0$
\begin{align*}
\left.  \frac{d}{dt}g_{ij}(t) \right|  _{t=0} =  &  \left(  u_{ik}\delta
^{kl}\eta_{lj}+\eta_{ik}\delta^{kl}u_{lj}\right)  +\left(  u_{im}\eta
_{pj}+\eta_{im}u_{pj}\right)  \delta^{mk}\delta^{pl}{\mathcal{B}}%
_{kl}(x,Du(x))\\
&  +\eta_{ki}\delta^{kl}{\mathcal{C}}_{lj}(x,Du(x))+\eta_{kj}\delta
^{kl}{\mathcal{C}}_{li}(x,Du(x))\\
&  +\left\{
\begin{array}
[c]{c}%
D_{y}{\mathcal{A}}_{ij}(x,Du(x))\\
+u_{ki}\delta^{kl}D_{y}{\mathcal{C}}_{lj}(x,Du(x))+u_{kj}\delta^{kl}%
D_{y}{\mathcal{C}}_{li}(x,Du(x))\\
+u_{im}u_{pj}\delta^{mk}\delta^{pl}D_{y}{\mathcal{B}}_{kl}(x,Du(x))
\end{array}
\right\}  \cdot D\eta.
\end{align*}

Then
\begin{align*}
&  \left.  \frac{d}{dt} \mbox{Vol}_{h}(\gamma\lbrack t]) \right|  _{t=0}
=\int_{B_{1}}\frac{1}{2}\sqrt{g[t]}g^{ij}[t]\left.  \frac{d}{dt}g_{ij}[t] dx
\right|  _{t=0}\\
&  =\frac{1}{2}\int_{B_{1}}\sqrt{g}g^{ij}\left(  u_{ik}\delta^{kl}\eta
_{lj}+\eta_{ik}\delta^{kl}u_{lj}+\left(  u_{im}\eta_{pj}+\eta_{im}%
u_{pj}\right)  \delta^{mk}\delta^{pl}{\mathcal{B}}_{kl}(x,Du(x))\right)  dx\\
&  \ \ +\frac{1}{2}\int_{B_{1}}\sqrt{g}g^{ij}\left(  \eta_{ki}\delta
^{kl}{\mathcal{C}}_{lj}(x,Du(x))+\eta_{kj}\delta^{kl}{\mathcal{C}}%
_{li}(x,Du(x))\right)  dx\\
&  \ \ +\frac{1}{2}\int_{B_{1}}\sqrt{g}g^{ij}
\begin{array}
[c]{c}%
\left\{
\begin{array}
[c]{c}%
D_{y}{\mathcal{A}}_{ij}(x,Du(x))\\
+u_{ki}\delta^{kl}D_{y}{\mathcal{C}}_{lj}(x,Du(x))+u_{kj}\delta^{kl}%
D_{y}{\mathcal{C}}_{li}(x,Du(x))\\
+u_{im}u_{pj}\delta^{mk}\delta^{pl}D_{y}{\mathcal{B}}_{kl}(x,Du(x))
\end{array}
\right\}  \cdot D\eta
\end{array}
dx.
\end{align*}
Dropping dependencies for easier presentation, and making use of symmetries%

\begin{align*}
\left.  \frac{d}{dt}\mbox{Vol}_{h}(\gamma[t])\right|  _{t=0}  &  =\int_{B_{1}%
}\sqrt{g}g^{ij}\left(  u_{ik}\delta^{kl}+u_{im}\delta^{mq}\delta
^{lk}{\mathcal{B}}_{qk}\right)  \eta_{lj}dx+\int_{B_{1}}\sqrt{g}g^{ij}%
\eta_{kj}\delta^{kl}{\mathcal{C}}_{li}dx\\
&  +\frac{1}{2}\int_{B_{1}}\sqrt{g}g^{ij}
\begin{array}
[c]{c}%
\left\{  D_{y}{\mathcal{A}}_{ij}+2u_{ik}\delta^{kl}D_{y}{\mathcal{C}}%
_{lj}+u_{im}u_{pj}\delta^{mk}\delta^{pl}D_{y}{\mathcal{B}}_{kl}\right\}  \cdot
D\eta
\end{array}
dx.
\end{align*}

Then we arrive at the Euler-Lagrange equation of $\mbox{Vol}_{h}$ for
variations in $\Gamma$:

\begin{lemma}
For $1\leq i,j,k,l\leq n$, let
\begin{align}
a^{ij,kl}(x,Du,D^{2}u)  &  =\sqrt{g}g^{ij}\delta^{kl}+\sqrt{g}g^{ij}%
{\mathcal{B}}_{lk}\label{l324}\\
b^{jk}(x,Du,D^{2}u)  &  =\sqrt{g}g^{ij}{\mathcal{C}}_{ki}\nonumber\\
c^{k}(x,Du,D^{2}u)  &  =\frac{1}{2}\sqrt{g}g^{ij}\left(  D_{{y^{k}}%
}{\mathcal{A}}_{ij}+2u_{ik}D_{{y^{k}}}{\mathcal{C}}_{kj}+u_{ik}u_{lj}%
D_{{y^{k}}}{\mathcal{B}}_{kl}\right) \nonumber\\
F^{jl}(x,Du,D^{2}u)  &  =a^{ij,kl}u_{ik}+b^{jl} \label{l344}%
\end{align}
Then the Euler-Lagrange equation of $\mbox{Vol}_{h}$ under variations in
$\Gamma$ is
\begin{equation}
\int F^{jl}\eta_{jl}+c^{k}\eta_{k}\,dx=0,\ \ \ \text{for all }\eta\in
C_{c}^{\infty}(B_{1}). \label{EL equation}%
\end{equation}

\end{lemma}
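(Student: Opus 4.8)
The plan is to start directly from the first-variation identity already displayed just above the statement and reorganize its integrand according to how many derivatives of $\eta$ each term carries. Recall that Jacobi's formula gives $\frac{d}{dt}\sqrt{\det g[t]}=\tfrac12\sqrt{g[t]}\,g^{ij}[t]\,\frac{d}{dt}g_{ij}[t]$, so, differentiating under the integral sign (legitimate since $\eta\in C_c^\infty(B_1)$, $u\in C^{1,1}$, and $h$ is smooth, so everything in sight is bounded and the $t$-dependence is smooth),
\[
\left.\frac{d}{dt}\mbox{Vol}_h(\gamma[t])\right|_{t=0}=\frac12\int_{B_1}\sqrt{g}\,g^{ij}\left.\frac{d}{dt}g_{ij}[t]\right|_{t=0}\,dx .
\]
The expression for $\frac{d}{dt}g_{ij}[t]|_{t=0}$, obtained from \eqref{gdur} by the product and chain rules — treating $\mathcal A,\mathcal B,\mathcal C$ as functions of $(x,Du)$, which is what produces the $D_y$-terms contracted against $D\eta$ — has already been computed in the excerpt, so no new differentiation is needed.

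First I would substitute that expression into the integral and split the integrand into two groups: (i) the terms in which $\eta$ enters through a pure second derivative ($\eta_{lj}$, $\eta_{kj}$, $\eta_{ik}$), and (ii) the terms in which $\eta$ enters through $D\eta$ only. Using the symmetry $g^{ij}=g^{ji}$ and relabeling the dummy indices $i\leftrightarrow j$ and $k\leftrightarrow l$ as needed, group (i) consolidates — the factor $2$ absorbing the symmetrizations exactly cancels the $\tfrac12$ from Jacobi's formula — into
\[
\int_{B_1}\Big(\sqrt{g}\,g^{ij}\big(u_{ik}\delta^{kl}+u_{im}\delta^{mq}\delta^{lk}\mathcal B_{qk}\big)+\sqrt{g}\,g^{ij}\delta^{kl}\mathcal C_{li}\Big)\eta_{lj}\,dx,
\]
which, after a final relabeling, is precisely $\int_{B_1}F^{jl}\eta_{jl}\,dx$ with $F^{jl}=a^{ij,kl}u_{ik}+b^{jl}$ and $a^{ij,kl},b^{jl}$ as in \eqref{l324}. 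Group (ii) is, again after symmetrizing in $i,j$ and using $g^{ij}=g^{ji}$, exactly $\int_{B_1}c^k\eta_k\,dx$ with $c^k$ as defined in \eqref{l324}. Setting the sum equal to zero for all $\eta\in C_c^\infty(B_1)$ is the definition of $\Gamma(u)$ being volume stationary, and it yields \eqref{EL equation}.

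I expect the only delicate point to be the index bookkeeping, not any analytic difficulty: since $\eta_{jl}$ is symmetric in $(j,l)$, only the $(j,l)$-symmetrized part of the coefficient of $\eta_{jl}$ is actually pinned down by the variational identity, so one should either check that $F^{jl}$ as written in \eqref{l344} is a valid representative, or simply record $F^{jl}$ in this form and read \eqref{EL equation} in the weak sense of \eqref{main}, in which any non-symmetric part of $F^{jl}$ contributes nothing. The matching of the factors of $2$ appearing in the $\mathcal B$- and $\mathcal C$-terms against the symmetrizations performed is the step where a stray factor is most easily lost, but it is entirely mechanical. No regularity beyond $u\in C^{1,1}$ is used: it only guarantees $D^2u\in L^\infty$ so that $g_{ij}$, $g^{ij}$, $\sqrt g$ and hence $F^{jl},c^k$ are bounded measurable, which is all that is needed to make the integrals in \eqref{EL equation} meaningful.
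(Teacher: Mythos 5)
Your proposal is correct and follows essentially the same route as the paper: the paper does not give a separate proof of this lemma — the lemma is simply a read-off of the first-variation computation displayed immediately above it, which is exactly what you do (Jacobi's formula, split the integrand by the order of derivative on $\eta$, symmetrize in the dummy indices using $g^{ij}=g^{ji}$ and $\delta^{kl}=\delta^{lk}$, and match coefficients). Your remark that only the $(j,l)$-symmetric part of $F^{jl}$ is determined by the weak formulation is a genuinely worthwhile clarification that the paper does not make explicit; the lemma's formula for $c^{k}$ also overloads the index $k$ (it appears both as the free index from $D_{y^{k}}$ and as a dummy in the contractions with $u_{ik}$, $\mathcal{C}_{kj}$, $\mathcal{B}_{kl}$), a notational wrinkle inherited from the paper that you do not flag but which does not affect the substance of your argument.
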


\begin{lemma}
\label{ellipticity} For any $s>0$ there exists $\varepsilon_{1}(s,n)$ $<1$
depending only on $s$ and $n$ such that if
\begin{align*}
h(0) &  =I_{2n}\\
\left\Vert D^{2}u\right\Vert _{L^{\infty}(B_{1})} &  \leq\varepsilon_{1}\\
\left\Vert Dh\right\Vert _{L^{\infty}(B_{1})} &  \leq\varepsilon_{1}%
\end{align*}
all hold we have%
\[
\left\Vert a^{ij,kl}(x,Du(x),D^{2}u(x))-\delta^{ij}\delta^{kl}\right\Vert
_{L^{\infty}(B_{1})}<s.
\]
(Here and below the norms $\left\Vert \cdot\right\Vert$ are defined as in
\eqref{noths}.)\ 
\end{lemma}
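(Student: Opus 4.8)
\noindent\textit{Plan of proof.} This is a quantitative ``closeness to the flat case'' statement, so the plan is simply to trace the three hypotheses through the formulas \eqref{gdur} and \eqref{l324}, collecting powers of $\varepsilon_1$, and then to choose $\varepsilon_1(s,n)$ small. The steps, in order, are: (1) show that the blocks $\mathcal A,\mathcal B,\mathcal C$ of $h$ in \eqref{defhm}, evaluated along the graph, are bounded by $C\varepsilon_1$; (2) conclude from \eqref{gdur} that $\|g-I_n\|_{L^\infty(B_1)}\le C(n)\varepsilon_1$; (3) pass to $g^{ij}$ and $\sqrt g$; (4) read off the estimate for $a^{ij,kl}$ from \eqref{l324} and fix $\varepsilon_1$.

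For step (1): since $a^{ij,kl}$ is formed from a gradient graph $\Gamma(u)\subset B_2^{2n}$, the point $(x,Du(x))$ lies in $B_2^{2n}$, where $h$ is defined; using $h(0)=I_{2n}$ and the hypothesis on $Dh$, the mean value theorem along the segment from the origin to $(x,Du(x))$ gives $|h_{pq}(x,Du(x))-\delta_{pq}|\le C\varepsilon_1$ for all $1\le p,q\le 2n$, i.e.\ (in the notation of \eqref{defhm}) $\sup|{\mathcal A}_{ij}|,\ \sup|{\mathcal B}_{kl}|,\ \sup|{\mathcal C}_{kj}|\le C\varepsilon_1$ along the graph. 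For step (2): insert these bounds together with $\|D^2u\|_{L^\infty(B_1)}\le\varepsilon_1$ into \eqref{gdur}. The term ${\mathcal A}_{ij}$ is $O(\varepsilon_1)$; the terms $u_{ik}\delta^{kl}u_{lj}$ and $u_{ki}\delta^{kl}{\mathcal C}_{lj}$ (together with its transposed counterpart) are $O(\varepsilon_1^2)$; and $u_{im}u_{pj}\delta^{mk}\delta^{pl}{\mathcal B}_{kl}$ is $O(\varepsilon_1^3)$; so for $\varepsilon_1\le1$,
\[
\|g-I_n\|_{L^\infty(B_1)}\le C(n)\,\varepsilon_1 .
\]
For step (3): shrink $\varepsilon_1$ so that $C(n)\varepsilon_1\le\tfrac12$; then $g$ is uniformly positive definite along the graph, so $M\mapsto M^{-1}$ and $M\mapsto\sqrt{\det M}$ are Lipschitz on $\{|M-I_n|\le\tfrac12\}$, giving $\|g^{ij}-\delta^{ij}\|_{L^\infty(B_1)}\le C(n)\varepsilon_1$ and $|\sqrt g-1|\le C(n)\varepsilon_1$, hence $\|\sqrt g\,g^{ij}-\delta^{ij}\|_{L^\infty(B_1)}\le C(n)\varepsilon_1$. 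For step (4): from \eqref{l324},
\[
\big|a^{ij,kl}-\delta^{ij}\delta^{kl}\big|=\big|\big(\sqrt g\,g^{ij}-\delta^{ij}\big)\delta^{kl}+\sqrt g\,g^{ij}{\mathcal B}_{lk}\big|\le C(n)\,\varepsilon_1 ,
\]
and choosing $\varepsilon_1(s,n)=\min\{1,\ 1/(2C(n)),\ s/C(n)\}$ with a single dimensional constant $C(n)$ absorbing all of the above yields $\|a^{ij,kl}-\delta^{ij}\delta^{kl}\|_{L^\infty(B_1)}<s$.

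I do not expect a genuine obstacle here. The only two points requiring a little care are that the graph stays inside $B_2^{2n}$ (so the hypotheses on $h$ and $Dh$ can be invoked along it), and that the dependence of $g^{-1}$ and $\sqrt{\det g}$ on $g$ be made quantitative and \emph{uniform}, which is immediate once $g$ is confined to the ball of radius $\tfrac12$ about $I_n$. Everything else is bookkeeping of the powers of $\varepsilon_1$ appearing in \eqref{gdur} and \eqref{l324}.
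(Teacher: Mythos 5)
Your proof is correct and follows essentially the same route as the paper's: bound $\mathcal A,\mathcal B,\mathcal C$ by $O(\varepsilon_1)$ along the graph, deduce $\|g-I_n\|\le C(n)\varepsilon_1$ from \eqref{gdur}, pass to $\sqrt g\,g^{ij}$, and read off $a^{ij,kl}$ from \eqref{l324}. The only cosmetic difference is that the paper packages the dependence of $\sqrt{\det}\cdot(\cdot)^{-1}$ on the matrix as an abstract modulus of continuity $\omega(z)$ and invokes only its continuity at $0$, whereas you use the Lipschitz estimate for $M\mapsto M^{-1}$ and $M\mapsto\sqrt{\det M}$ near $I_n$; both are valid and yours is marginally more quantitative.
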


\proof From (\ref{l324})
\[
a^{ij,kl}=\delta^{ij}\delta^{kl}+\left(  \sqrt{g}g^{ij}-\delta^{ij}\right)
\delta^{kl}+\sqrt{g}g^{ij}{\mathcal{B}}_{lk}%
\]
It will be convenient to define the following function
\[
\omega(z)=\sup_{M\in S^{n\times n},\left\Vert M\right\Vert \leq z}\left\Vert
\sqrt{\det\left(  I+M\right)  }\left(  1+M\right)  ^{ij}-\delta^{ij}%
\right\Vert
\]
which is clearly continuous for small values of $z$ and vanishes at $z=0.$
\ This allows us to write
\[
\left\Vert a^{ij,kl}-\delta^{ij}\delta^{kl}\right\Vert \leq\omega\left(
\left\Vert g-\delta_{ij}\right\Vert \right)  +(1+\omega\left(  \left\Vert
g-\delta_{ij}\right\Vert \right)  {\mathcal{B}}_{lk}%
\]
Noting from (\ref{defhm})
\begin{align*}
{\mathcal{A}}(0)  &  =0\\
{\mathcal{B}}(0)  &  =0\\
{\mathcal{C}}(0)  &  =0
\end{align*}
and
\[
\sup_{B_{2}^{2n}}\left\{  |\mathcal{A}|,|\mathcal{B}|,|\mathcal{C}|\right\}
\leq 2 \varepsilon_{1}%
\]
we may inspect (\ref{gdur}) and see that
\[
\left\Vert g_{ij}-\delta_{ij}\right\Vert \leq C(n)\left(  \varepsilon
_{1}+3\varepsilon_{1}^{2}+\varepsilon_{1}^{3}\right)
\]
Then%
\[
\left\Vert a^{ij,kl}-\delta^{ij}\delta^{kl}\right\Vert \leq\omega\left(
C(n)\varepsilon_{1}\right)  +\left(  1+\omega\left(  C(n)\varepsilon
_{1}\right)  \right)  \varepsilon_{1}%
\]
Because $\omega$ is continuous near $0$ we choose an $\varepsilon_{1}$ such
that
\[
\left\Vert a^{ij,kl}-\delta^{ij}\delta^{kl}\right\Vert <s.
\]
\endproof

\begin{theorem}
\label{3p1} Suppose that $u(x)$ is a $C^{1,1}$ function on $B_{1}$ such that
$Du=0,D^{2}u(0)=0$ and
\[
\left\Vert D^{2}u\right\Vert _{L^{\infty}(B_{1})}\leq\varepsilon
_{1}(\varepsilon_{0},n)
\]
for $\varepsilon_{1}$ determined by Lemma \ref{ellipticity} and $\varepsilon
_{0}(\frac{1}{2},n)$ determined by (\ref{cons1}). If $\Gamma(u)=\left\{
(x,Du)\right\}  $ is volume stationary among gradient graphs over the
$x$-plane in  for a Riemannian metric $h$ on the euclidean ball $B_{2}^{2n}$
in $\mathbb{R}^{2n}$, then $u$ is smooth in a neighborhood of $0.$
\end{theorem}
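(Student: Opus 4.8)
The plan is to recognize the Euler--Lagrange equation \eqref{EL equation}, which $u$ satisfies weakly by hypothesis, as an instance of the double divergence equation \eqref{main} (with the $a^{k}$ there equal to $c^{k}$ here and $b\equiv 0$), and then to verify the two structural conditions of Theorem \ref{main1:Intro} — $\Lambda$-uniform ellipticity on a convex neighborhood $U$ of the graph and the closeness condition \eqref{cons1} — so that Theorem \ref{main1:Intro} (equivalently Theorems \ref{main_1} and \ref{main_2}) yields that $u$ is smooth on $B_{1}$, hence near $0$.

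First I would fix the domain and put ourselves in the setting of Lemma \ref{ellipticity}. Since $Du(0)=0$ and $\left\Vert D^{2}u\right\Vert _{L^{\infty}(B_{1})}\leq\varepsilon_{1}$ we have $\left\Vert Du\right\Vert _{L^{\infty}(B_{1})}\leq\varepsilon_{1}$, so the image $\{(x,Du(x),D^{2}u(x)):x\in B_{1}\}$, together with its convex hull, lies in the small convex set $U=B_{1}\times B_{2\varepsilon_{1}}\times\{\sigma\in S^{n\times n}:\left\Vert \sigma\right\Vert <2\varepsilon_{1}\}$. After the rescaling $u(x)\mapsto r^{-2}u(rx)$ accompanied by the corresponding dilation of $h$ — which rescales $\mbox{Vol}_{h}$ by a $t$-independent constant and so preserves volume stationarity, and preserves $Du(0)=D^{2}u(0)=0$ while shrinking $\left\Vert Dh\right\Vert _{L^{\infty}}$ to be as small as desired — we may assume $h(0)=I$ and $\left\Vert Dh\right\Vert _{L^{\infty}(B_{1})}\leq\varepsilon_{1}$, so that $\mathcal{A},\mathcal{B},\mathcal{C}$ are all $O(\varepsilon_{1})$ on $B_{1}\times B_{2\varepsilon_{1}}$. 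By \eqref{gdur} the induced metric $g$ is a quadratic polynomial in the Hessian with coefficients smooth in $(x,Du)$; because the quadratic term $u_{ik}\delta^{kl}u_{lj}$ is positive semidefinite while the remaining terms are $O(\varepsilon_{1})$, $g$ stays positive definite on $U$, so $\sqrt{\det g}$, $g^{ij}$, and hence $a^{ij,kl},b^{jl},c^{k}$ from \eqref{l324}--\eqref{l344}, are smooth there. Thus $u\in W^{2,\infty}(B_{1})$ is a weak solution of \eqref{main} with smooth coefficients on the convex region $U$.

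The crux is the smallness of $\partial F^{jl}/\partial u_{ik}$ throughout $U$. From $F^{jl}=a^{ij,kl}u_{ik}+b^{jl}$ one computes
\[
\frac{\partial F^{jl}}{\partial u_{mn}}=a^{mj,nl}+\frac{\partial a^{ij,kl}}{\partial u_{mn}}u_{ik}+\frac{\partial b^{jl}}{\partial u_{mn}},
\]
with the symmetrization in $\partial/\partial u_{mn}$ understood. The argument behind Lemma \ref{ellipticity} bounds $\left\Vert a^{ij,kl}-\delta^{ij}\delta^{kl}\right\Vert$ on $U$ by any prescribed $s$ once $\varepsilon_{1}$ is small; and since $a^{ij,kl},b^{jl}$ depend on the Hessian only through $g$, the derivative bound \eqref{g derivative} gives $\left\vert \partial a^{ij,kl}/\partial u_{mn}\right\vert ,\left\vert \partial b^{jl}/\partial u_{mn}\right\vert \leq C(n,\left\Vert h\right\Vert _{C^{0}})\,\varepsilon_{1}$ on $U$, so the last two terms above are $O(\varepsilon_{1}^{2})$. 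Hence, choosing $\varepsilon_{1}$ (equivalently $s$) small enough, we obtain $\left\vert \partial F^{jl}/\partial u_{ik}(\xi)-\delta^{ij}\delta^{kl}\right\vert <\varepsilon_{0}$ for all $\xi\in U$. This simultaneously gives $\Lambda$-uniform ellipticity on $U$ with $\Lambda=\tfrac{1}{2}$ — using that $\delta^{ij}\delta^{kl}$, contracted against $\sigma_{ij}\sigma_{kl}$ after symmetrization, equals $\tfrac{1}{2}\big((\mathrm{tr}\,\sigma)^{2}+\left\Vert \sigma\right\Vert ^{2}\big)$ — and, taking $\xi=\xi_{0}=(0,0,0)\in U$ (at which $g=I$ because $\mathcal{A}(0)=\mathcal{B}(0)=\mathcal{C}(0)=0$ and $D^{2}u(0)=0$, and at which the two correction terms vanish because $u_{ik}(0)=0$ and $\mathcal{C}(0)=0$, so that $\partial F^{jl}/\partial u_{ik}(\xi_{0})=\delta^{ij}\delta^{kl}$), it is precisely the closeness condition \eqref{cons1}.

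With $\Lambda$-uniformity and \eqref{cons1} in hand, Theorem \ref{main1:Intro} applies to $u$ on $B_{1}$ and gives smoothness; undoing the rescaling, $u$ is smooth in a neighborhood of $0$. The step that genuinely requires care is the one just described: controlling the nonlinear corrections $(\partial a^{ij,kl}/\partial u_{mn})u_{ik}+\partial b^{jl}/\partial u_{mn}$ to the leading coefficient $a^{mj,nl}$ uniformly over the whole neighborhood $U$ (not merely along the solution), where the derivative bound \eqref{g derivative}, the vanishing of the off-diagonal blocks of $h$ at $0$, and the smallness of $Dh$ must all be combined; the remainder is bookkeeping — matching the threshold $s$ of Lemma \ref{ellipticity} to the constant $\varepsilon_{0}(\tfrac{1}{2},n)$ of Theorem \ref{main1:Intro}, and then fixing $\varepsilon_{1}=\varepsilon_{1}(s,n)$ accordingly.
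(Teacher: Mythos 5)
Your proof is correct and follows the same overall route as the paper: rescale so that $\left\Vert Dh\right\Vert$ is small, invoke Lemma \ref{ellipticity} to make $a^{ij,kl}$ close to $\delta^{ij}\delta^{kl}$, and then apply Theorem \ref{main1:Intro} via the Euler–Lagrange equation \eqref{EL equation}. Where you go beyond the paper's terse proof is in spelling out the step it compresses into "observe \eqref{l344}": you explicitly differentiate $F^{jl}=a^{ij,kl}u_{ik}+b^{jl}$, isolate the leading coefficient $a^{mj,nl}$ from the $O(\varepsilon_1^2)$ corrections via \eqref{g derivative} and the vanishing of $\mathcal{A},\mathcal{B},\mathcal{C}$ at the origin, verify the Legendre ellipticity of the symmetrized constant-coefficient operator (getting $\Lambda=\tfrac12$), and identify $\xi_0=(0,0,0)$ as the base point for \eqref{cons1}. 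These are exactly the details the paper leaves implicit in passing from Lemma \ref{ellipticity} (about $a^{ij,kl}$) to the hypothesis of Theorem \ref{main1:Intro} (about $\partial F^{jl}/\partial u_{ik}$), so you have supplied a more complete version of the same argument rather than a different one.
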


\proof We start by perfoming a rescaling. Consider the map%
\[
S:B_{2R}^{2n}\rightarrow B_{2}^{2n}%
\]
given by
\[
S\left(  x,y\right)  =\left(  \frac{x}{R},\frac{y}{R}\right)  .
\]
This gives us a metric $\tilde{h}$ on $B_{2R}^{2n}$ via
\[
\tilde{h}=S^{\ast}h
\]
which satisfies%
\[
\left\Vert D\tilde{h}\right\Vert =\frac{1}{R}\left\Vert Dh\right\Vert .
\]
In particular, by choosing $R$ large, we can scale so that
\[
\left\Vert D\tilde{h}\right\Vert \leq\varepsilon_{1}(\varepsilon_{0}).
\]
Notice that by letting
\[
\tilde{u}=R^{2}u\left(\frac{x}{R}\right)\text{ on }B_{R}%
\]
the gradient graph $\tilde{u}$ is precisely the pullback of the gradient
graph of $u$ via the scaling $S:$%
\begin{align*}
S\left(  x,D\tilde{u}(x)\right) &  =\frac{1}{R}\cdot\left(  x,RDu\left(\frac
{x}{R}\right)\right)  \\
&  =\left(  \frac{x}{R},Du\left(\frac{x}{R}\right)\right)  .
\end{align*}
Note also that
\begin{equation}
D^{2}\tilde{u}(x)=D^{2}u\left(  \frac{x}{R}\right)  \label{rescaleOK}%
\end{equation}
will satisfy the same bounds. \ Now restricting $\tilde{h}$ to $B_{2}^{2n}$
and $\tilde{u}$ to $B_{1}$ we can apply Lemma \ref{ellipticity}, observe
(\ref{l344}) and conclude that the Euler-Lagrange equation (\ref{EL equation})
satisfies the condition in Theorem \ref{main1:Intro}. \ Thus $\tilde{u}$ is
smooth inside $B_{1}.$ Rescaling, we see that $u$ is smooth insde $B_{1/R}.$ \
\hfill $\square$

\section{Hamiltonian stationary Lagrangian submanifolds in a symplectic
manifold}

\label{sec_ main}

Let $(M,\omega)$ be a symplectic $2n$-manifold with a symplectic 2-form
$\omega$, and a Riemannian metric $h$ on $M$ compatible with $\omega$ and an
almost complex structure $J$ on $M$, i.e. $\omega(X,Y)=h(JX,Y)$ for arbitrary
smooth vector fields $X,Y$ on $M$. Suppose that $L$ is a $C^{1}$-regular
submanifold of $M$ which is Lagrangian respect to $\omega$ and Hamiltonian
stationary among all Hamiltonian variations of $L$ fixing the boundary of $L$
if it is non-empty.

In order to arrange for the setting of \ Theorem \ref{3p1},  we adapt a result
from \cite[Prop. 3.2 and Prop. 3.4]{JLS} on existence of Darboux coordinates with
estimates on a symplectic manifold. Let $\pi:\mathcal{U}\rightarrow M$ be the
$U(n)$ frame bundle of $M$. A point in $\mathcal{U}$ is a pair $(p,v)$ with
$\pi(p,v)=p\in M$ and $v:\mathbb{R}^{2n}\rightarrow T_{p}M$ an isomorphism
satisfying $v^{\ast}(\omega_{p})=\omega_{0}$ and $v^{\ast}(h|_{p})=h_{0}$ (the
standard metric on $%
\mathbb{C}
^{n}$). \ The right action of $U(n)$ on  $\mathcal{U}$ is free: $\gamma
(p,v)=(p,v\circ\gamma)$ for any $\gamma\in U(n)$.


\begin{proposition}
[Joyce-Lee-Schoen]\label{Darboux} Let $(M,\omega)$ be a real $2n$-dimensional
symplectic manifold without boundary, and a Riemannian metric $h$ compatible
with $\omega$ and an almost complex structure $J$. Let $\mathcal{U}$ be the
$U(n)$ frame bundle of $M$. Then for small $\varepsilon>0$ we can choose a
family of embeddings $\Upsilon_{p,v}:B^{2n}_{\varepsilon}\rightarrow M$ depending
smoothly on $\left(p,v\right)  \in U$, where $B^{2n}_{\varepsilon}$ is the ball
of radius $\varepsilon$ about $0$ in $\mathbb{C}^{n},$ such that for all
$\left(  p,v\right)  \in U$ we have

\begin{enumerate}
\item $\Upsilon_{p,v}(0)=p$ and $d\Upsilon_{p,v}|_{0}=v:\mathbb{C}%
^{n}\rightarrow T_{p}M;$

\item $\Upsilon_{p,v\circ\gamma}(0)\equiv\Upsilon_{p,v}\circ\gamma$ for all
$\gamma\in U(n);$

\item $\Upsilon_{p,v}^{\ast}(\omega)\equiv\omega_{0}=\frac{\sqrt{-1}}{2}
\sum_{j=1}^{n}dz_{j}\wedge d\bar{z}_{j};$ and

\item $\Upsilon^{*}_{p,v}(h) = h_{0} +O(|z|)$.

\end{enumerate}

Moreover, for a dilation map $\mathbf{t}:B^{2n}_{R}\to B^{2n}_{\varepsilon}$ given by
$\mathbf{t}(z)=tz$ where $t\leq\varepsilon/R$, set $h^{t}_{p,v} =
t^{-2}(\Upsilon_{p,v}\circ\mathbf{t})^{*}h$. Then it holds

\begin{enumerate}
\item[(5)] $\|h^{t}_{p,v}-h_{0}\|_{C^{0}} \leq C_{0} t \ \ \ \ \mbox{and}
\ \ \ \|\partial h^{t}_{p,v}\|_{C^{0}}\leq C_{1} t$,
\end{enumerate}

where norms are taken w.r.t. $h_{0}$ and $\partial$ is the Levi-Civita
connection of $h_{0}$.
\end{proposition}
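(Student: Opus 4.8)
The plan is to follow the construction of Joyce--Lee--Schoen in \cite{JLS}; we outline it here since properties (4) and (5) are what Theorem \ref{3p1} actually consumes. First I would produce a preliminary family of charts $\Phi_{p,v}\colon B^{2n}_{\varepsilon}\to M$ that already depends smoothly on $(p,v)\in\mathcal{U}$ and is $U(n)$-equivariant. A convenient choice is $\Phi_{p,v}(z)=\exp^{h}_{p}(v(z))$, the Riemannian exponential map of $h$ read in the frame $v$: smoothness in $(p,v)$ is inherited from $\exp^{h}$; equivariance $\Phi_{p,v\circ\gamma}=\Phi_{p,v}\circ\gamma$ holds because each $\gamma\in U(n)$ is a linear isometry of $(\mathbb{R}^{2n},h_{0})$ preserving $B^{2n}_{\varepsilon}$; property (1) holds since $d(\exp^{h}_{p})|_{0}=\mathrm{id}$ and $d\Phi_{p,v}|_{0}=v$; and $\Phi_{p,v}$ is an embedding for $\varepsilon$ small, uniformly in $(p,v)$ when $M$ is compact (the case relevant to Theorem \ref{main2:Intro}) and locally in general. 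In geodesic normal coordinates one even has $\Phi_{p,v}^{*}(h)=h_{0}+O(|z|^{2})$, but $\Phi_{p,v}^{*}(\omega)$ agrees with $\omega_{0}$ only to first order at $0$.

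Next I would correct the symplectic form by a fiberwise Moser/relative-Darboux argument. Set $\omega_{1}=\Phi_{p,v}^{*}(\omega)$, $\omega_{s}=(1-s)\omega_{0}+s\omega_{1}$, and write $\omega_{1}-\omega_{0}=d\lambda$ with a primitive $\lambda$ chosen to vanish to second order at $0$ (possible since $\omega_{1}$ and $\omega_{0}$ agree to first order at $0$); solve $\iota_{X_{s}}\omega_{s}=-\lambda$ and let $\Upsilon_{p,v}=\Phi_{p,v}\circ\psi_{1}^{-1}$, where $\psi_{s}$ is the flow of $X_{s}$. Because $X_{s}(0)=0$ and $dX_{s}|_{0}=0$, the flow $\psi_{1}$ fixes $0$ with derivative the identity there, so (1) survives; $\psi_{1}$ is tangent to the identity at $0$ but only to first order, which is why the metric in the new chart is merely $h_{0}+O(|z|)$, giving (4); equivariance (2) is obtained by making every choice ($\lambda$, hence $X_{s}$, hence $\psi_{s}$) equivariantly, e.g.\ by averaging the primitive over $U(n)$ using invariance of $\omega_{0}$ and $h_{0}$; and smooth dependence on $(p,v)$ is inherited because all the ODE data vary smoothly. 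This yields (1), (2), (3), and (4).

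Finally, for the dilation estimate (5) I would compute directly. If $g_{ij}(z)$ are the coefficients of $\Upsilon_{p,v}^{*}(h)$ in the $z$-coordinates, then $(\Upsilon_{p,v}\circ\mathbf{t})^{*}h$ has coefficients $t^{2}g_{ij}(tz)$, so $h^{t}_{p,v}=t^{-2}(\Upsilon_{p,v}\circ\mathbf{t})^{*}h$ has coefficients exactly $g_{ij}(tz)$ — the factor $t^{-2}$ is designed precisely to cancel the quadratic rescaling of a $2$-tensor under $\mathbf{t}(z)=tz$. By (4), $g_{ij}(w)=\delta_{ij}+O(|w|)$ on $B^{2n}_{\varepsilon}$ with $\partial g$ bounded there (smoothly in $(p,v)$, uniformly when $\mathcal{U}$ is compact), hence for $z\in B^{2n}_{R}$ and $t\le\varepsilon/R$ we get $|g_{ij}(tz)-\delta_{ij}|\le C_{0}t$ and $|\partial_{k}(g_{ij}(tz))|=t\,|(\partial_{k}g_{ij})(tz)|\le C_{1}t$; since in these coordinates the Levi-Civita connection of $h_{0}$ and the coordinate derivative $\partial$ coincide, and the $C^{0}$ norms are measured in $h_{0}$, this is exactly (5).

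The main obstacle is the middle step: running Moser's trick so that it is simultaneously (a) smooth in the parameter $(p,v)$, (b) exactly $U(n)$-equivariant, and (c) fixes the origin to first order — in particular arranging the primitive $\lambda$ to vanish to second order at $0$ while remaining equivariant. Everything else is routine: (1) and (4) are Taylor expansion at a point where the frame $v$ has normalized $h$, and (5) is the elementary rescaling computation above.
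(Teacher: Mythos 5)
The paper does not prove this proposition: it is quoted verbatim as a black box from Joyce--Lee--Schoen (\cite[Prop.~3.2 and Prop.~3.4]{JLS}), with item~(5) being the observation on dilations that the authors extract from it. Your sketch is the standard construction underlying the JLS result (parametrized exponential chart followed by a fiberwise Moser argument), so it is the same approach as the cited source, and your derivation of~(5) from~(4) by the rescaling computation is exactly the intended content of that item. Two small remarks. First, under the usual Moser convention ($\psi_{0}=\mathrm{id}$, $\psi_{s}^{*}\omega_{s}=\omega_{0}$, generating field $X_{s}$ with $\iota_{X_{s}}\omega_{s}=-\lambda$), one gets $\psi_{1}^{*}\omega_{1}=\omega_{0}$ and hence $\Upsilon_{p,v}=\Phi_{p,v}\circ\psi_{1}$, not $\Phi_{p,v}\circ\psi_{1}^{-1}$; this is only a sign/convention slip. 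Second, the ``main obstacle'' you flag --- arranging the primitive $\lambda$ to be simultaneously $U(n)$-equivariant, second-order vanishing at $0$, and smooth in $(p,v)$ --- is in fact handled automatically and requires no averaging: take the radial homotopy-operator primitive $\lambda=\int_{0}^{1}t\,\iota_{R}\bigl(\omega_{1}-\omega_{0}\bigr)\big|_{tz}\,dt$, where $R$ is the Euclidean radial field. Since $R$ and $\omega_{0}$ are $U(n)$-invariant and $\omega_{1,p,v\circ\gamma}=\gamma^{*}\omega_{1,p,v}$ follows from equivariance of $\Phi$, one gets $\lambda_{p,v\circ\gamma}=\gamma^{*}\lambda_{p,v}$ on the nose; and because $\omega_{1}-\omega_{0}$ vanishes at the origin, the explicit integral shows $\lambda=O(|z|^{2})$. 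Smoothness in $(p,v)$ is then automatic from smooth dependence of ODE solutions on parameters. With those adjustments your outline is a correct reconstruction of the cited result.
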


The following is our main regularity result:

\begin{theorem}
\label{main_geometric} Let $(M,\omega,h)$ be a symplectic manifold. Suppose
that $L$ is a $C^{1}$ Hamiltonian stationary Lagrangian submanifold (possibly
open but without boundary) embedded in $M$. Then $L$ is smooth.
\end{theorem}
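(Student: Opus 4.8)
The plan is to localize at an arbitrary point $p\in L$ and, via a Darboux chart adapted to $T_pL$, reduce to the euclidean variational problem of Section \ref{sec_EL}, where Theorem \ref{3p1} applies directly.

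First I would set up coordinates. Fix $p\in L$; it suffices to prove $L$ is smooth near $p$. Since $L$ is Lagrangian, $T_pL$ is a Lagrangian subspace of the Hermitian space $(T_pM,\omega_p,h_p)$, so there is a unitary frame $v\colon\mathbb{C}^n\to T_pM$ (that is, $v^*\omega_p=\omega_0$ and $v^*h_p=h_0$) with $v(\mathbb{R}^n\times\{0\})=T_pL$. Applying Proposition \ref{Darboux} to $(p,v)$ gives an embedding $\Upsilon=\Upsilon_{p,v}\colon B^{2n}_\varepsilon\to M$ with $\Upsilon(0)=p$, $d\Upsilon|_0=v$, $\Upsilon^*\omega=\omega_0$, and $\Upsilon^*h=h_0+O(|z|)$. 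Then $\widetilde L:=\Upsilon^{-1}(L)$ is a $C^1$ Lagrangian submanifold of $(B^{2n}_\varepsilon,\omega_0)$ through $0$ with $T_0\widetilde L=v^{-1}(T_pL)=\mathbb{R}^n\times\{0\}$. Being $C^1$ and tangent to the horizontal plane at $0$, on a small ball $B_\rho^n$ it is a graph $\{(x,F(x))\}$ with $F\in C^1$, $F(0)=0$, $DF(0)=0$; the Lagrangian condition $\omega_0|_{\widetilde L}=0$ says exactly that $F$ is a closed $1$-form, hence $F=Du$ for a potential $u\in C^2(B_\rho^n)$ with $Du(0)=0$ and $D^2u(0)=0$. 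By continuity of $D^2u$, after shrinking $\rho$ we may assume $\|Du\|_{L^\infty(B_\rho^n)}\le 1$ and $\|D^2u\|_{L^\infty(B_\rho^n)}$ as small as we like.

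Next I would translate Hamiltonian stationarity of $L$ into volume stationarity of $\Gamma(u)$ among gradient graphs. For $\eta\in C_c^\infty(B_\rho^n)$, the path $\Gamma(u+t\eta)=\{(x,D(u+t\eta)(x))\}$ has $t=0$ variation field the vertical field $V=(0,D\eta)$ along $\widetilde L$, and a one-line computation with $\omega_0=\sum dx_i\wedge dy_i$ gives $\iota_V\omega_0|_{\widetilde L}=-d\eta$, which is exact. Moreover the Hamiltonian flow of the function $(x,y)\mapsto-\eta(x)$ (cut off in $y$ away from $\widetilde L$ if needed) has velocity field exactly $(0,D\eta)=V$ along $\widetilde L$, so $\Gamma(u+t\eta)$ and this genuine, compactly-supported-near-$p$ Hamiltonian deformation of $\widetilde L$ agree to first order and hence share the same first variation of volume. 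Since $\Upsilon$ is an isometry onto its image with the pullback metric, and $L$ is Hamiltonian stationary, $\left.\tfrac{d}{dt}\mbox{Vol}_{\Upsilon^*h}(\Gamma(u+t\eta))\right|_{t=0}=0$ for every such $\eta$; that is, $\Gamma(u)$ is volume stationary among gradient graphs with respect to $\Upsilon^*h$, and $(\Upsilon^*h)(0)=h_0=I_{2n}$.

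Finally I would match the hypotheses of Theorem \ref{3p1} by rescaling. Using part (5) of Proposition \ref{Darboux} (equivalently, replacing $u$ by $t^{-2}u(t\,\cdot)$ on $B_1$, which leaves $D^2u$ unchanged pointwise, and $\Upsilon^*h$ by the rescaled metric $h^t$, for which $h^t(0)=I_{2n}$ and $\|h^t-h_0\|_{C^0}+\|\partial h^t\|_{C^0}=O(t)$, while volume stationarity among gradient graphs is preserved by the scaling of the functional), we obtain for $t$ small a $C^{1,1}$ potential on $B_1$ with vanishing gradient and Hessian at $0$, with $\|D^2(\cdot)\|_{L^\infty(B_1)}\le\varepsilon_1(\varepsilon_0,n)$, whose gradient graph is volume stationary among gradient graphs in $(B_2^{2n},h^t)$. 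Theorem \ref{3p1} then gives smoothness near $0$ of this rescaled potential, hence of $u$, hence of $L$ near $p$; as $p\in L$ was arbitrary, $L$ is smooth. The conceptual heart — and the step I expect to need the most care — is the preceding paragraph: recognizing the potential path $u\mapsto u+t\eta$ as, to first order, a compactly supported Hamiltonian deformation, so that criticality of $L$ transfers to criticality of $\Gamma(u)$. The remainder is bookkeeping: verifying, as in the proof of Theorem \ref{3p1} via \eqref{l344} and Lemma \ref{ellipticity}, that smallness of $\|D^2u\|$ and $\|\partial h^t\|$ makes the Euler--Lagrange operator \eqref{EL equation} $\Lambda$-uniform and close to constant coefficients in the sense of \eqref{cons1}, and tracking the scaling of $\mbox{Vol}$ so that criticality survives the rescaling.
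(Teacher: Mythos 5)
Your proof is correct and follows essentially the same route as the paper: pass to a Darboux chart adapted to $T_pL$ via Proposition \ref{Darboux}, write $L$ locally as a $C^2$ gradient graph with small Hessian, rescale using part (5) to make $\|\partial h^t\|$ small, and invoke Theorem \ref{3p1}. The one place you add genuine value is in spelling out (rather than merely asserting, as the paper does) that the family $\Gamma(u+t\eta)$ is a Hamiltonian deformation — via $\iota_V\omega_0|_{\widetilde L}=-d\eta$ and identifying the flow of $-\eta(x)$ — so that Hamiltonian stationarity of $L$ really does transfer to volume stationarity of $\Gamma(u)$ among gradient graphs.
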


\proof Fix an arbitrary point $p$ in $L\subset M$. By Proposition
\ref{Darboux}, we can choose Darboux coordinates around $\Upsilon_{p,v}$ at
$p$, choosing $v$ so that $d\Upsilon_{p,v}|_{0}\left(
\mathbb{R}
^{n}\right)  =T_{p}L.$ Now the submanifold
\[
L_{0}=\Upsilon_{p,v}^{-1}(L\cap\Upsilon_{p,v}\left(  B^{2n}_{\varepsilon}\right)
)\subset B^{2n}_{\varepsilon}\subset\mathbb{C}^{n}%
\]
is Lagrangian and Hamiltonian stationary in $\left(  B^{2n}_{\varepsilon}%
,\Upsilon_{p,v}^{\ast}h,\omega_{0}\right)  $. \ \ As a Lagrangian submanifold
tangent to $%
\mathbb{R}
^{n}$ at $0,$ $L_{0}$ must be represented in a neighborhood of $0$ as the
gradient graph of function $u$ satisfying $Du(0)=0$ and $D^{2}u(0)=0$.
\ \ \ Because $L_{0}$ is $C^{1},$ the Hessian $D^{2}u$ is continuous: We can
choose $0<\varepsilon_{2}<\varepsilon$ if necessary such that
\[
\left\Vert D^{2}u\right\Vert _{C^{0}(B_{\varepsilon_{2}/2}) } <\varepsilon_{1}%
\]
and so that the projection of $L_{0}\cap B^{2n}_{\varepsilon}$ to $%
\mathbb{R}
^{n}$ contains $B_{\varepsilon_{2}/2},$ for the $\varepsilon_{1}$ provided
by Theorem \ref{3p1}. Next we make use of the dilation map in Proposition
\ref{Darboux} (5), choosing $t<\frac{1}{2}\varepsilon_{2},$ small enough so
that
\[
\Vert\partial h_{p,v}^{t}\Vert_{C^{0}}\leq C_{1}t<\varepsilon_{1}.
\]
We now have the following: \ A\ rescaled submanifold $\tilde{L}_{0}$, still
Lagrangian, and Hamiltonian stationary with respect to the metric $h_{p,v}%
^{t},$ which satisfies
\[
\left\Vert Dh_{p,v}^{t}\right\Vert <\varepsilon_{1}%
\]
so that the projection of $\ \tilde{L}_{0}\cap B^{2n}_{\frac{2\varepsilon}{t}}$ to
$%
\mathbb{R}
^{n}$ contains $B_{1}.$  \ \ Noting that the scaling does not change the
Hessian $D^{2}u$ (recall (\ref{rescaleOK})), we see that we are in the setting
of Theorem \ref{3p1}. Since $\omega_{0}$ is the standard symplectic form,
the condition of being Lagrangian Hamiltonian stationary is equivalent to
being critical for gradient graphs.  Theorem \ref{3p1}  now gives us that $u$
is smooth in a neighborhood of $0,$ so $L$ is smooth in a neighborhood of $p.$
As $p$ was arbitrary, $L$ is smooth everywhere.\endproof

\bibliographystyle{amsalpha}
\bibliography{fourth}

\end{document}